\theoremstyle{definition}
\newtheorem{defn}{Definition}[section]
\newtheorem{thm}[defn]{Theorem}
\newtheorem{tvr}[defn]{Proposition}
\newtheorem{cor}[defn]{Corollary}
\theoremstyle{remark}
\newtheorem{example}{Example}[section]
\theoremstyle{remark}
\newlength{\defbaselineskip}
\newcommand{\setlinespacing}[1]%
           {\setlength{\baselineskip}{#1 \defbaselineskip}}
\newcommand{\R}{\mathbb{R}}
\newcommand{\Z}{\mathbb{Z}}
\newcommand{\N}{\mathbb{N}}
\newcommand{\ep}{\varepsilon}
\newcommand{\la}{\lambda}
\newcommand{\set}[2]{\left\{#1  \mid #2 \right\}}
\newcommand{\map}{\rightarrow}
\newcommand{\abs}[1]{\left\vert#1\right\vert}
\newcommand{\comb}[2]{\begin{pmatrix}
     #1\\
     #2
  \end{pmatrix}}
\newcommand{\setcomb}[2]{
\left\{
\begin{smallmatrix}
#1 \\ #2
\end{smallmatrix}
 \right\}  }
\newcommand{\setb}[2]{\left\{#1 \, \big|\, #2 \right\}}
\newcommand{\wt}{\widetilde}
\begin{document}

\title[Multivariate cosine transforms and polynomials]
{Discrete transforms and orthogonal polynomials of (anti)symmetric multivariate cosine functions}

\author[J. Hrivn\'ak]{Ji\v{r}\'{i} Hrivn\'{a}k$^{1}$}
\author[L. Motlochov\'a]{Lenka Motlochov\'{a}$^{1,2}$}

\date{\today}
\begin{abstract}\small
The discrete cosine transforms of types V--VIII are generalized to the antisymmetric and symmetric multivariate discrete cosine transforms. Four families of discretely and continuously orthogonal Chebyshev-like polynomials corresponding to the antisymmetric and symmetric generalizations of cosine functions are introduced. Each family forms an orthogonal basis of the space of all polynomials with respect to some weighted integral. Cubature formulas, which correspond to these families of polynomials and which stem from the developed discrete cosine transforms, are derived. Examples of three-dimensional  interpolation formulas and three-dimensional explicit forms of the polynomials are presented.    
\end{abstract}

\maketitle

\noindent
$^1$ Department of physics, Faculty of Nuclear Sciences and Physical Engineering, Czech Technical University in
\phantom{$^1$} Prague, B\v{r}ehov\'a~7, CZ-115 19 Prague, Czech Republic\\
$^2$ D\'epartement de math\'ematiques et de statistique, Universit\'e de Montr\'eal, Qu\'ebec, Canada
\vspace{5pt}

\noindent
\textit{E-mail:} jiri.hrivnak@fjfi.cvut.cz, lenka.motlochova@fjfi.cvut.cz

\bigskip 
\noindent
{\bf Keywords:} discrete multivariate cosine transforms, orthogonal polynomials, cubature formulas.

\bigskip
\noindent
{\bf MSC:} 41A05, 42B10, 65D32.

\section{Introduction}
This paper aims to complete and extend the study of antisymmetric and symmetric multivariate generalizations of common cosine functions of one variable from \cite{KPtrig}. 
First, the set of four symmetric and four antisymmetric discrete Fourier-like transforms from \cite{KPtrig} is extended. Then four families of Chebyshev-like orthogonal polynomials are introduced and the entire collection of the 16 discrete cosine transforms (DCTs) is used to derive the corresponding numerical integration formulas. 

The antisymmetric and symmetric cosine functions of $n$ variables are introduced in \cite{KPtrig} as determinants and permanents of matrices whose entries are cosine functions of one variable. The lowest-dimensional nontrivial case $n=2$ is detailed in \cite{HP}. These real-valued functions have several remarkable properties such as continuous and discrete orthogonality which lead to continuous and discrete analogues of Fourier transforms. The discrete orthogonality relations of these functions are a consequence of ubiquitous DCTs and their Cartesian product multidimensional generalizations. There are eight known different types of DCTs based on various boundary conditions \cite{Brit}. Only the first four transforms are generalized to the multidimensional symmetric cosine functions \cite{KPtrig}.  Therefore, the remaining four transforms of types  V-VIII  need to be developed to obtain the full collections of antisymmetric and symmetric cosine transforms with all possible boundary conditions. The resulting 16 transforms are then available for similar applications as multidimensional DCTs. Besides a straightforward utilization of these transforms to interpolation methods, they may also serve as a starting point for Chebyshev-like polynomial analysis.

The Chebyshev polynomials of one variable are well-known and extensively studied orthogonal polynomials connected to efficient methods of numerical integration and approximations. The Chebyshev polynomials are of the four basic kinds \cite{Hand}, the first and third kinds related to the cosine functions, the second and fourth kinds related to the sine functions. Since both families of antisymmetric and symmetric cosine functions  are based on the one-dimensional cosine functions, they admit a multidimensional generalization of the one-dimensional Chebyshev polynomials of the first and third kinds. The results are four families of orthogonal polynomials, which can be viewed as the symmetric and antisymmetric Chebyshev-like polynomials of the first and third kinds. Several generalizations of Chebyshev polynomials to higher dimensions are known -- in fact for the two-dimensional case the resulting polynomials become, up to a multiplication by constant, special cases of two-variable analogues of Jacobi polynomials \cite{koorn,koorn2,koorn1}. In full generality, multidimensional Chebyshev polynomials of the first kind are constructed, for example, in \cite{hoff,beer}, for broader theoretical background see also \cite{jac,mac}. As is the case of the Chebyshev polynomials of one variable, the multivariate Chebyshev-like polynomials inherit properties from the generalized cosine functions. The link with these cosine functions provides tools to generalize efficient numerical integration formulas of the classical Chebyshev polynomials.  

One of the most important application of the classical Chebyshev polynomials is the calculation of  numerical quadratures -- formulas equating a weighted integral of a polynomial function not exceeding a specific degree with a linear combination of polynomial values at some points called nodes \cite{Hand, riv}. The main point of quadrature formulas, which are in a multidimensional setting known as cubature formulas, is to replace integration by finite summing. The specific degree, to which cubature formulas hold exactly for polynomials, represents a degree of precision of a given formula. 

If a cubature formula holds only for polynomials of degree at most $2d-1$, then
the least number of nodes is equal to the dimension of the space of multivariate polynomials of degree at most $d-1$.  Any cubature formulas which satisfy the lowest bound of the number of nodes have the maximal degree of precision and are called the Gaussian cubature formulas \cite{Xuort, xua2,xuc2,xug2}. In addition, it is known that such formulas exist only if the number of real distinct common zeros of the corresponding orthogonal polynomials of degree $d$ is also exactly the dimension of the space of the multivariate polynomials of degree at most $d-1$ \cite{Xuort}. It appears that among the 16 types of cubature formulas, resulting from 16 discrete transforms, 4 are Gaussian. The remaining 12, even though they are not optimal, extend the options for numerical calculation of multivariate integrals. 

Both antisymmetric and symmetric cosine functions can be restricted to their fundamental domain -- a certain simplex in $\R^n$. For practical applications, it is essential that action of the permutation group $S_n$ on this simplex results in the entire space $\R^n$. Thus, a domain of any function of $n$ real variables can be split into blocks -- covered by sufficiently small copies of the fundamental domain. Since the fundamental domain is a subset of the $n-$dimensional cube, this method results in $n!$ times more blocks than the standard splitting into  $n-$dimensional cubes. The multidimensional (anti)symmetric DCTs and the direct calculation of the corresponding interpolations are then performed in each of the blocks separately. Moreover, the resulting DCT coefficients can be further analyzed and serve as an input to multidimensional analogues of data hiding \cite{Lin} and image recognition \cite{Man} methods. Even though these methods can be applied to any sampled function, the more a given data set obeys the (anti)symmetries and boundary conditions of the DCTs at hand, the more suitable is the given method. The applicability of these methods to interpolation of potentials \cite{YuAnNy} can be tested -- for instance, any potential in physics, which depends on the radial distance from the origin only (gravitational, Coloumb, Yukawa), is also symmetric as a function in $\R^3$. 

The 16 developed cubature formulas offer new options for numerical integration on the transformed fundamental domain via a certain substitution. The transformed domain is, however, of a nonstandard shape -- two main approaches, discussed together with applications in discretizations of PDEs and spectral approximations in \cite{munthe1,munthe2}, can be followed to rectify this impracticality. Both these methods result in integration formulas on the simplex of the shape of the original fundamental domain. The first method straightens the transformed domain back to the original via a suitable straightening map; this methods results in higher densities of the nodes around the corners and edges of the simplex. The second method inscribes the original fundamental domain into the transformed domain and considers only the intersection points as nodes. The main advantage of both methods is that the resulting discrete calculus on the simplex allows a block decomposition of any bounded domain and thus, similarly to the (anti)symmetric cosine functions interpolation methods, can process practically any data set.              

In Section \ref{uvod}, notation and terminology as well as relevant facts from \cite{KPtrig} are reviewed. Simplified forms of three special cases of the generalized cosine functions, which appear as denominators in the definition of the Chebyshev-like multivariate polynomials, are deduced. The continuous orthogonality relations of the generalized cosine functions are presented. In Section \ref{DMCT}, the one-dimensional DCTs of types V--VIII are reviewed and generalized to the antisymmetric and symmetric multivariate DCTs. The interpolation formulas in terms of antisymmetric and symmetric cosine functions are developed.  In Section~\ref{seccheb}, the four families of the multivariate Chebyshev-like polynomials are introduced and three-dimensional examples presented. It is proved that each family of polynomials forms an orthogonal basis of the space of all polynomials with respect to the scalar product given by a weighted integral. In Section \ref{seccub}, the cubature formulas for each family of polynomials are derived. The last section contains concluding remarks and addresses follow-up questions.


\section{Symmetric and antisymmetric multivariate cosine functions}\label{uvod}
\subsection{Definitions, properties, and special values}\

The symmetric and antisymmetric multivariate generalizations of the cosine functions are defined and their properties detailed in \cite{KPtrig}. The antisymmetric cosine functions $\cos^-_\lambda(x)$ and the symmetric cosine functions $\cos^+_\lambda(x)$ of variable $x=(x_1,\dots,x_n)\in\R^n$ and labeled by parameter $\lambda=(\lambda_1,\dots,\lambda_n)\in\R^n$ are defined as determinants and permanents, respectively, of the matrices with the entries $\cos (\pi \la_i x_j  )$, i.e., taking a permutation $\sigma\in S_n$ with its sign $\mathrm{sgn} (\sigma)$ one has
\begin{equation}\label{trigfun}
\begin{aligned}
\cos^-_\lambda(x)
&=\sum_{\sigma\in S_n} \mathrm{sgn}\,(\sigma)\cos{(\pi\lambda_{\sigma(1)}x_1)}\cos{(\pi\lambda_{\sigma(2)}x_2)}\cdots\cos{(\pi\lambda_{\sigma(n)}x_n)}\,,\\
\cos^+_\lambda(x)&=\sum_{\sigma\in S_n} \cos{(\pi\lambda_{\sigma(1)}x_1)}\cos{(\pi\lambda_{\sigma(2)}x_2)}\cdots\cos{(\pi\lambda_{\sigma(n)}x_n)}\,.
\end{aligned}
\end{equation}
The $\cos^\pm_\lambda(x)$ functions  are continuous and have continuous derivatives of all degrees in $\R^n$. Moreover, they are invariant or anti-invariant under all permutations in $\sigma\in S_n$, i.e.,
\begin{alignat}{3}
&\cos^-_\lambda(\sigma(x))=\mathrm{sgn}(\sigma)\cos^-_\lambda(x)\,,&\quad &\cos^-_{\sigma(\lambda)}(x)=\mathrm{sgn}(\sigma)\cos^-_\lambda(x)\,,\label{perm}\\
&\cos^+_\lambda(\sigma(x))=\cos^+_\lambda(x)\,,&\quad &\cos^+_{\sigma(\lambda)}(x)=\cos^+_\lambda(x)\,,\label{perms}
\end{alignat}
where $\sigma(x)=(x_{\sigma(1)},x_{\sigma(2)},\dots,x_{\sigma(n)})$ and $\sigma(\lambda)=(\lambda_{\sigma(1)},\lambda_{\sigma(2)},\dots,\lambda_{\sigma(n)})$. Additionally, they are symmetric 
with respect to the alternations of signs -- denoting $\tau_i$ the change of sign of $x_i$, i.e., $\tau_i(x)=\tau_i(x_1,\dots,x_i,\dots,x_n)\equiv(x_1,\dots,-x_i,\dots,x_n)$ it holds that
\begin{equation}\label{sign}
\begin{alignedat}{2}
&\cos^-_\lambda(\tau_i(x))=\cos^-_\lambda(x)\,,&\quad &\cos^-_{\tau_i(\lambda)}(x)=\cos^-_\lambda(x)\,,\\
&\cos^+_\lambda(\tau_i(x))=\cos^+_\lambda(x)\,,&\quad &\cos^+_{\tau_i(\lambda)}(x)=\cos^+_\lambda(x)\,.
\end{alignedat}
\end{equation}

Considering the functions $\cos^\pm_k(x)$ with their parameter having only integer values $k \in \Z ^n$ and denoting 
\begin{equation}\label{rho}
	\rho\equiv\left(\frac12,\dots,\frac12\right),
\end{equation}
the symmetry related to the periodicity of the one-dimensional cosine functions, i.e., for $t=(t_1,\dots,t_n)\in\Z^n$ we obtain
\begin{equation}\label{period}
\begin{alignedat}{3}
&\cos^-_{k}(x+2 t)=\cos^-_k(x)\,,&\quad& \cos^-_{k+\rho}(x+2 t)=(-1)^{t_1+\dots+t_n}\cos^-_{k+\rho}(x),\\
&\cos^+_{k}(x+2 t)=\cos^+_k(x)\,,&\quad&\cos^+_{k+\rho}(x+2 t)=(-1)^{t_1+\dots+t_n}\cos^+_{k+\rho}(x).
\end{alignedat}
\end{equation}
Introducing the two index sets
\begin{align}
P^+\equiv &\set{(k_1,\dots,k_n)\in\Z^n}{k_1\geq k_2\geq\dots\geq  k_n\geq0},	\\
P^{++}\equiv &\set{(k_1,\dots,k_n)\in\Z^n}{k_1> k_2>\dots>  k_n\geq0},
\end{align}
the relations \eqref{perm}, \eqref{perms}, and \eqref{sign} imply that we consider only the following restricted values of $k\in \Z^n$,
\begin{equation}\label{functions}
\begin{alignedat}{2}
&\cos^-_k(x)\,,\cos^-_{k+\rho}(x)\,:&\quad& k\in P^{++}\,,\\
&\cos^+_k(x)\,,\cos^+_{k+\rho}(x)\,:&\quad& k\in P^+\,.
\end{alignedat}
\end{equation}
By the relations \eqref{perm}--\eqref{period} we consider the antisymmetric and symmetric cosine functions $\cos^\pm_k(x)$ labeled by $k\in \Z^n$ on the closure of the fundamental domain $F(\wt{S}_n^{\mathrm{aff}})$ of the form
\begin{equation}
F(\wt{S}_n^{\mathrm{aff}})=\set{(x_1,x_2,\dots,x_n)\in \R^n}{1\geq x_1\geq x_2\geq\dots \geq x_n\geq 0}\,.
\end{equation} 
Due to \eqref{perm}, \eqref{sign}, and the identity $\cos{\pi\left(k_i+\frac{1}{2}\right)}=0$, valid for $k_i\in \Z$ and $i\in \{1,\dots,n\}$,  we omit those boundaries of $F(\wt{S}_n^{\mathrm{aff}})$ for which
\begin{itemize}
\item  $x_i=x_{i+1}$, $i\in \{1,\dots,n-1\}$ in the case of $\cos^-_k(x)$,
\item  $x_i=x_{i+1}$, $i\in \{1,\dots,n-1\}$ or $x_1=1$ in the case of $\cos^-_{k+\rho}(x)$,
\item  $x_i=1$, $i\in \{1,\dots,n\}$ in the case of $\cos^+_{k+\rho}(x)$.
\end{itemize}
The contour plots of the cuts of the three-dimensional antisymmetric and symmetric cosine functions 
are depicted in Fig.~\ref{cosa} and Fig.~\ref{coss} respectively. 
\begin{figure}
\includegraphics[width=3.5cm]{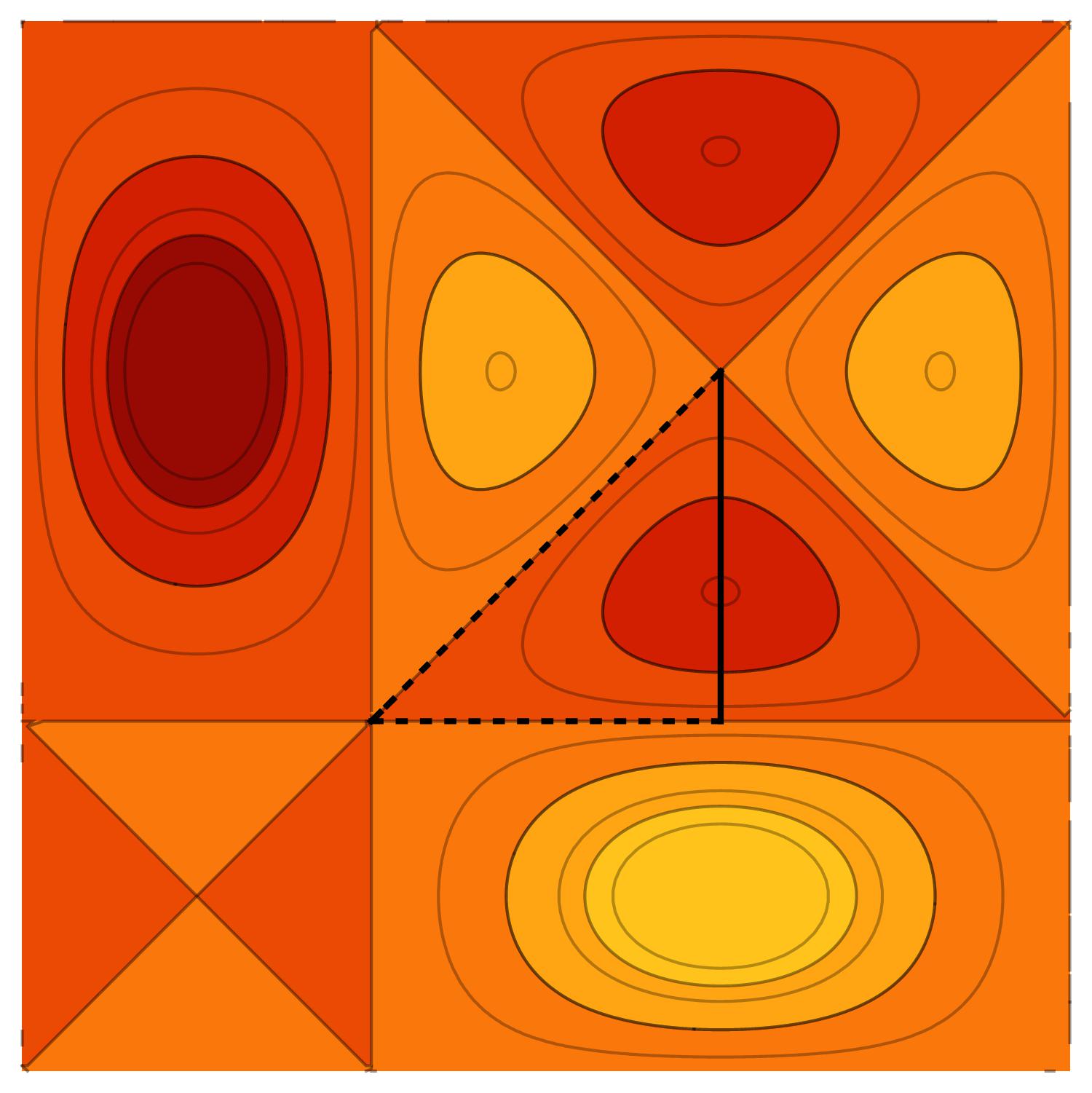}
\hspace{8pt}
\includegraphics[width=3.5cm]{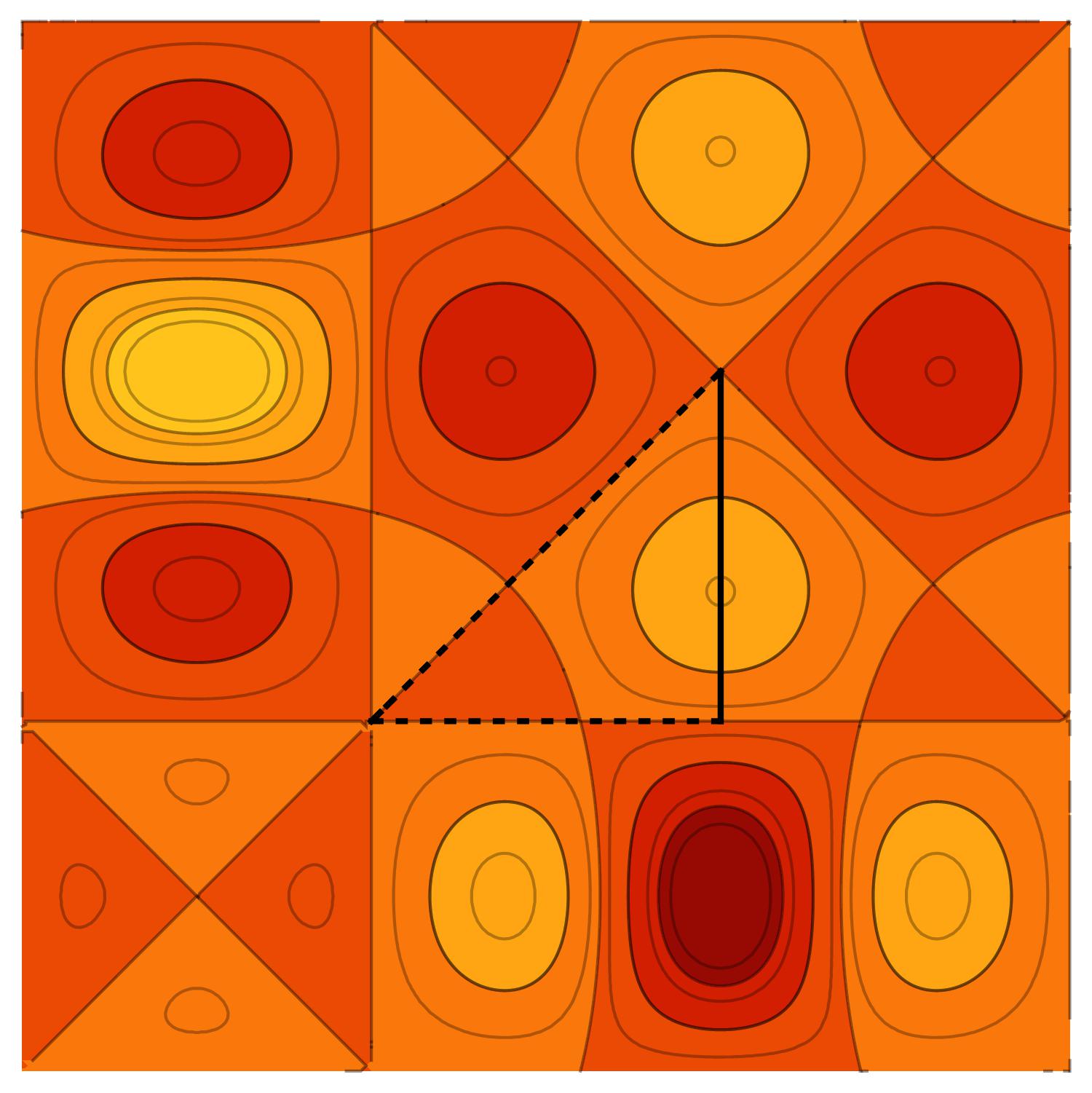}
\hspace{8pt}
\includegraphics[width=3.5cm]{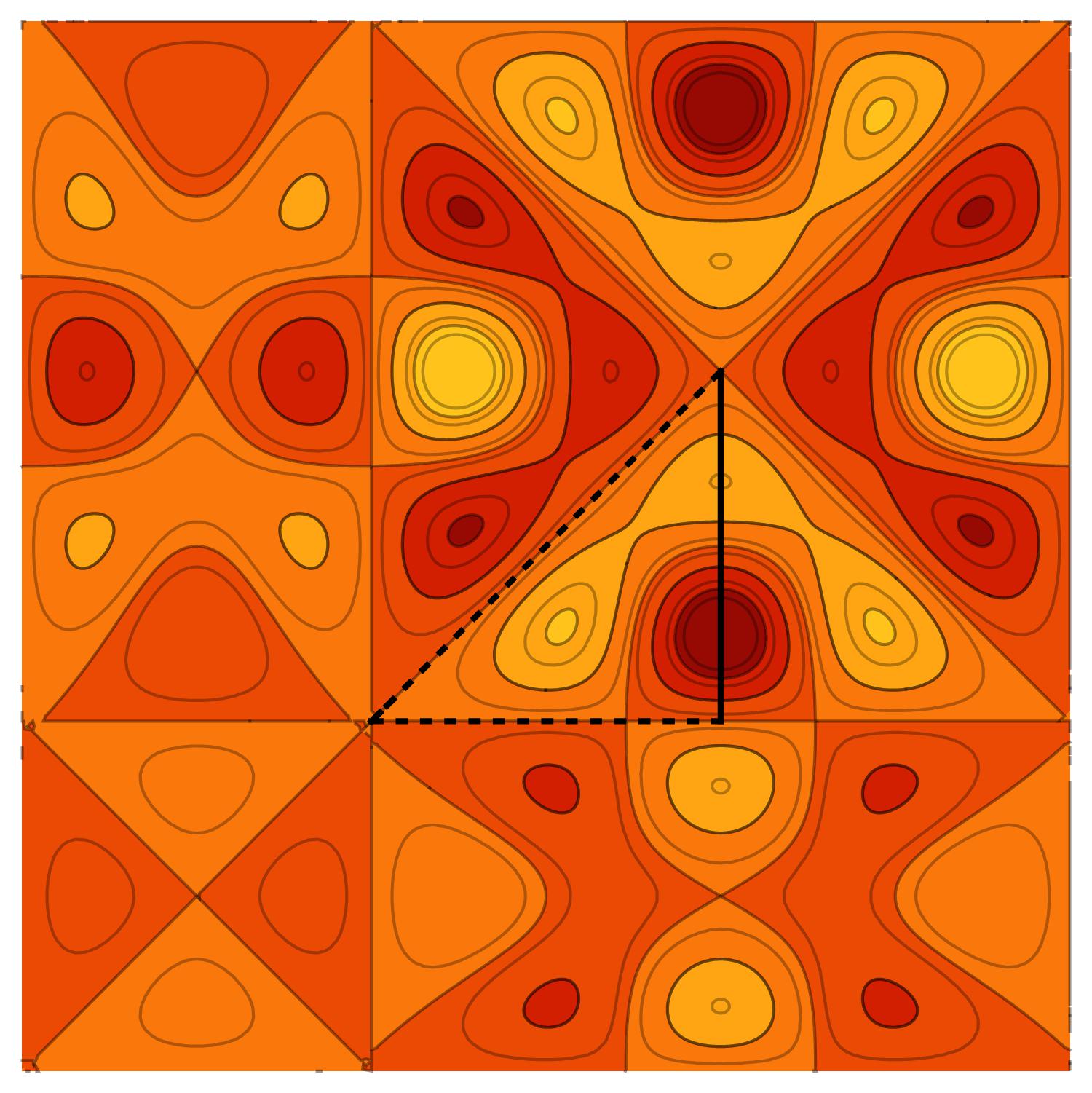}
\caption{\small The contour plots of the graph cuts ($x_3=\frac13$) of the antisymmetric cosine functions $\cos^-_k(x)$.  The cut of the boundary of the fundamental domain $F(\wt{S}_3^\mathrm{aff})$ is depicted as the black triangle with the dashed lines representing the part of the boundary for which all the antisymmetric cosine functions vanish.}
\label{cosa}
\end{figure}
\begin{figure}
\includegraphics[width=3.5cm]{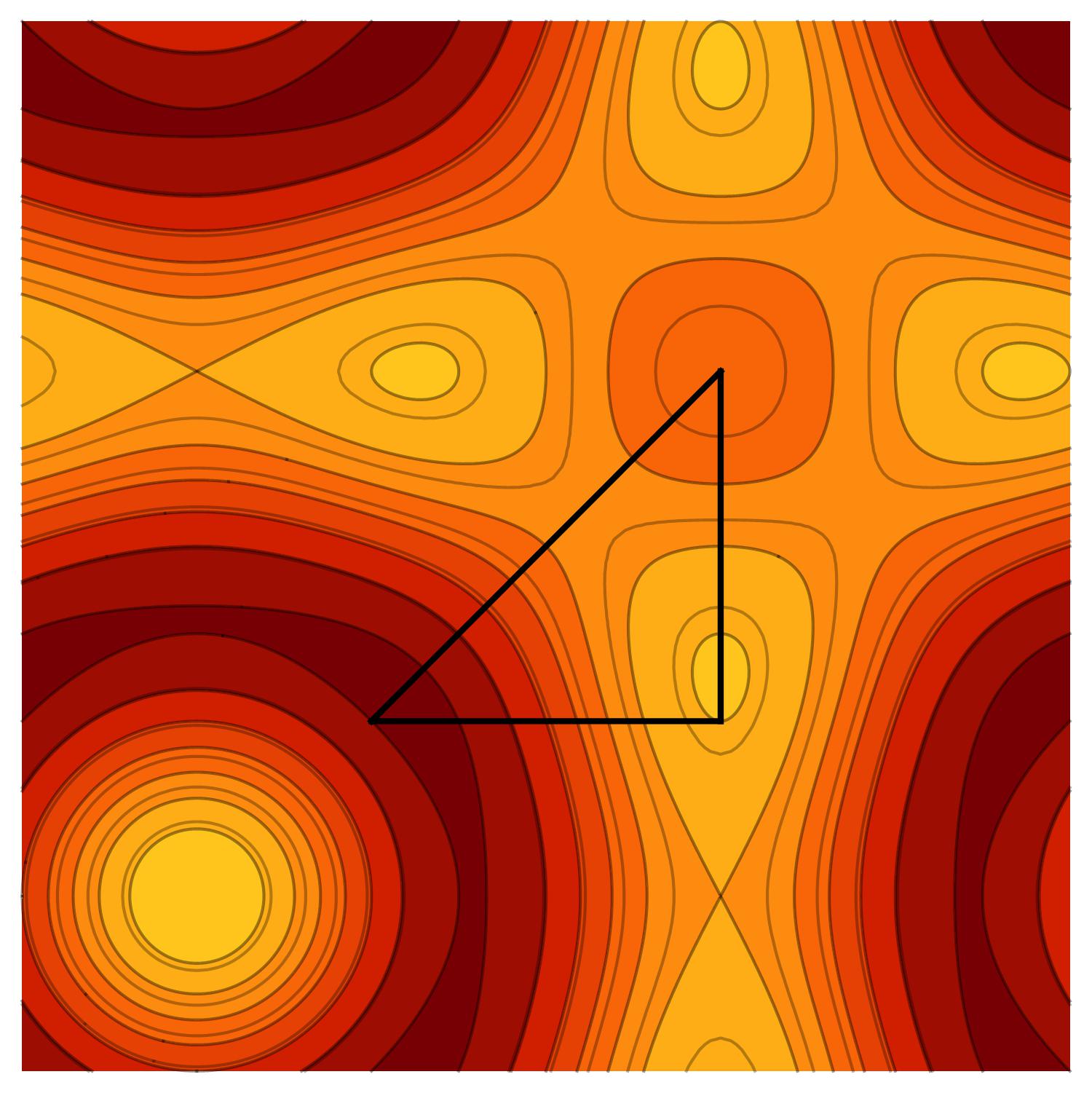}
\hspace{8pt}
\includegraphics[width=3.5cm]{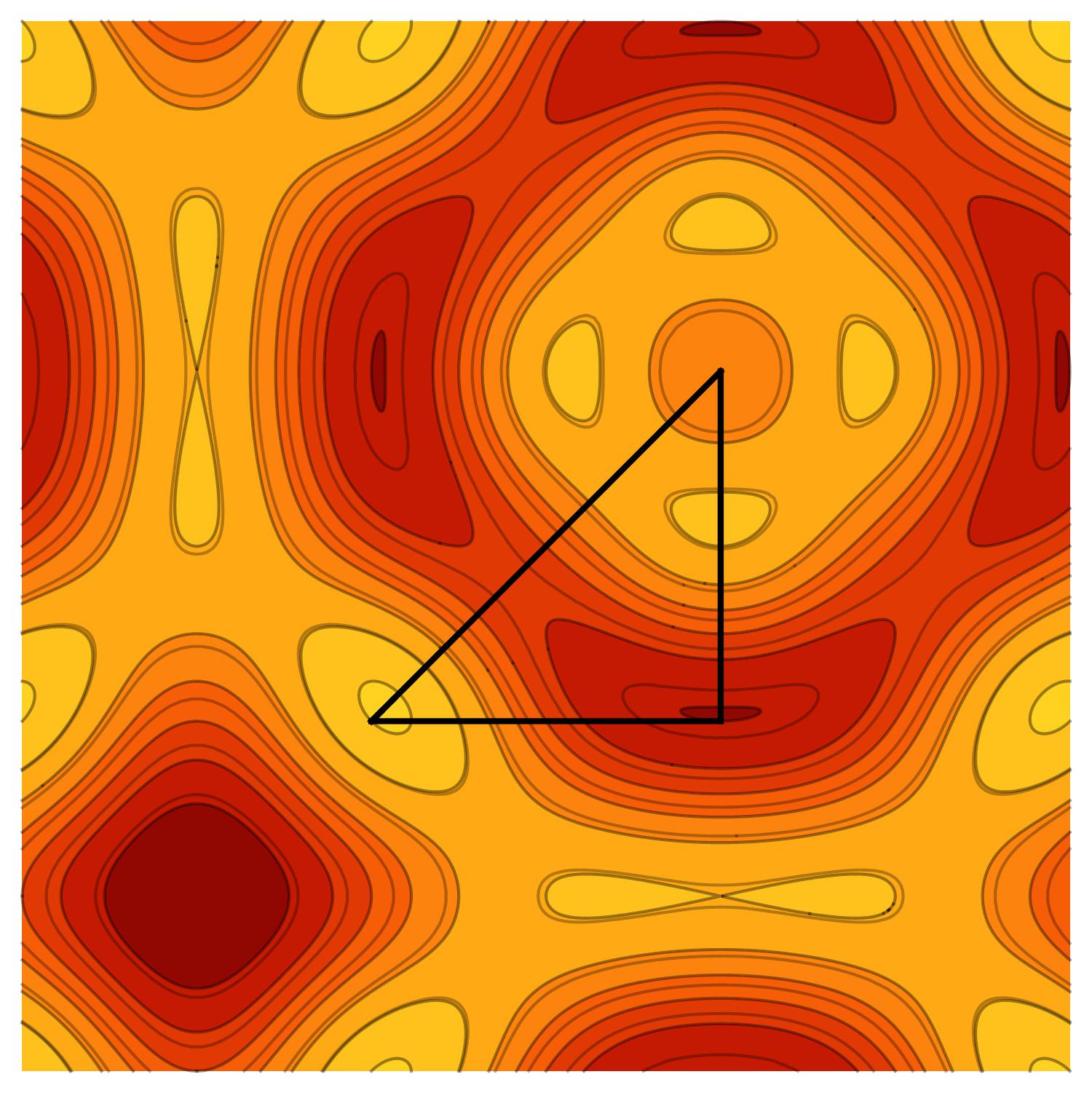}
\hspace{8pt}
\includegraphics[width=3.5cm]{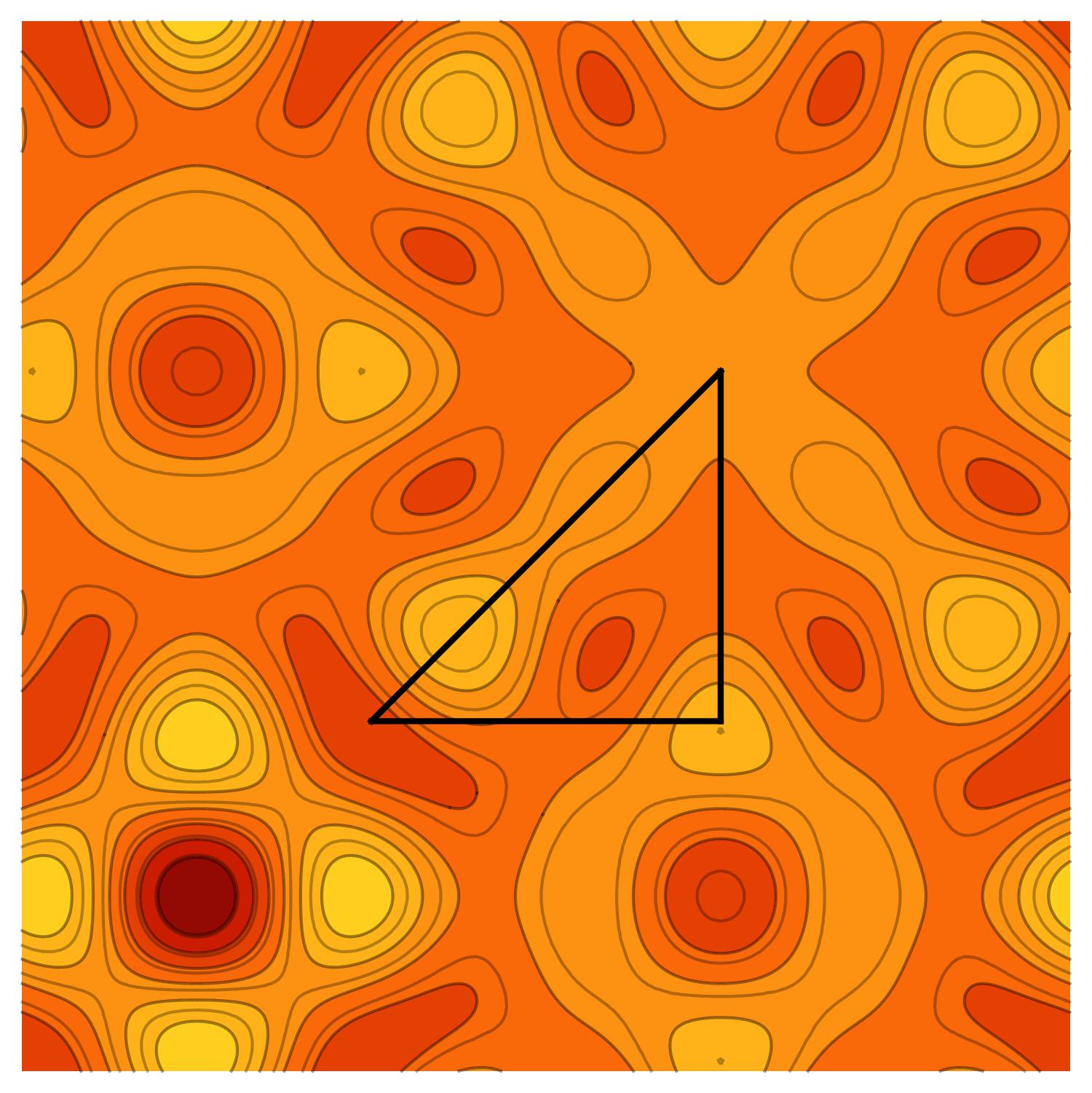}
\caption{\small The contour plots of the graph cuts ($x_3=\frac13$) of the symmetric cosine functions $\cos^+_k(x)$.  The cut of the boundary of the fundamental domain $F(\wt{S}_3^\mathrm{aff})$ is depicted as the black triangle.}
\label{coss}
\end{figure}

For several special choices of  $k\in \Z^n$, the functions \eqref{trigfun} are expressed as products of the one-dimensional cosine and sine functions \cite{KPtrig}. In addition to the formulas from \cite{KPtrig}, we calculate this form for the special cases needed for the analysis of the related orthogonal polynomials. Denoting   
\begin{align}
\rho_1\equiv & (n-1,n-2,\dots,0),\label{rho1}\\
\rho_2\equiv & \left(n-\frac12,n-\frac32,\dots,\frac32,\frac12\right), \label{rho2}
\end{align}
we derive the form of the functions  $\cos^+_\rho(x)$, $\cos^-_{\rho_1}(x)$ and $\cos^-_{\rho_2}(x)$ in the following proposition.  
\begin{tvr}\label{prop1}
Let $k\in \N$ is given by
\begin{equation}
k=\begin{cases}
\frac{n-1}{2} & \text{for $n$ odd},\\ 
\frac{n}{2}& \text{for $n$ even}.
\end{cases}
\end{equation}
Then it holds that
\begin{align}
\cos^-_{\rho_1}(x_1,\dots,x_n)&=(-1)^k2^{(n-1)^2}\prod_{1\leq i<j\leq n}\sin{\left(\frac{\pi}{2}(x_i+x_j)\right)}\sin{\left(\frac{\pi}{2}(x_i-x_j)\right)}\,,\label{lowestant}\\
\cos^-_{\rho_2}(x_1,\dots,x_n)&=(-1)^k2^{n(n-1)}\prod_{i=1}^n \cos{\left(\frac{\pi}{2}x_i\right)}\prod_{1\leq i<j\leq n}\sin{\left(\frac{\pi}{2}(x_i+x_j)\right)}\sin{\left(\frac{\pi}{2}(x_i-x_j)\right)}\,,\label{lowestanthalf}\\
\cos^+_{\rho}(x_1,\dots,x_n)&=n!\prod_{i=1}^n\cos{\left(\frac{\pi}{2}x_i\right)}\,.\label{lowestsymhalf}
\end{align}
where $\rho_1$, $\rho_2$, and $\rho$ are given by \eqref{rho1}, \eqref{rho2}, and \eqref{rho}, respectively.
\end{tvr}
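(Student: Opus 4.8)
\medskip

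\noindent\textbf{Proof proposal.} The third identity~\eqref{lowestsymhalf} follows at once from the definition~\eqref{trigfun}: every entry of $\rho$ equals $\tfrac12$, so each of the $n!$ summands of the permanent defining $\cos^+_\rho(x)$ equals $\prod_{i=1}^n\cos\bigl(\tfrac\pi2 x_i\bigr)$, and there are $n!$ of them. For the two antisymmetric identities I would start from the observation that~\eqref{trigfun} expresses $\cos^-_\lambda$ as a determinant, $\cos^-_\lambda(x)=\det\bigl[\cos(\pi\lambda_i x_j)\bigr]_{i,j=1}^{n}$, so that $\cos^-_{\rho_1}(x)=\det\bigl[\cos(\pi(n-i)x_j)\bigr]$ and $\cos^-_{\rho_2}(x)=\det\bigl[\cos(\pi(n-i+\tfrac12)x_j)\bigr]$. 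The plan is to turn each of these into a generalized Vandermonde determinant in the variables $c_j=\cos(\pi x_j)$ by means of the Chebyshev polynomials: $\cos(m\theta)=T_m(\cos\theta)$, where $T_m$ has degree $m$ and leading coefficient $2^{m-1}$ for $m\ge1$ (and $T_0=1$), and $\cos\bigl((m+\tfrac12)\theta\bigr)=\cos\bigl(\tfrac\theta2\bigr)\,V_m(\cos\theta)$, where $V_m$ is the Chebyshev polynomial of the third kind, of degree $m$ and leading coefficient $2^m$; the latter can be read off from $V_m=T_m-(1-c)U_{m-1}$, with $U_{m-1}$ the Chebyshev polynomial of the second kind.

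For~\eqref{lowestant}: substituting $\cos(\pi(n-i)x_j)=T_{n-i}(c_j)$, the matrix has $i$-th row $T_{n-i}$ evaluated at $c_1,\dots,c_n$. Reversing the order of the rows multiplies the determinant by $(-1)^{n(n-1)/2}$ and produces the matrix whose $i$-th row is $T_{i-1}(c_j)$, i.e.\ a matrix whose rows are polynomials of degrees $0,1,\dots,n-1$; by elementary row operations its determinant equals the product of the leading coefficients of $T_0,\dots,T_{n-1}$, namely $2^{(n-1)(n-2)/2}$, times the ordinary Vandermonde determinant $\prod_{1\le i<j\le n}(c_j-c_i)$. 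Finally $\cos u-\cos v=-2\sin\tfrac{u+v}{2}\sin\tfrac{u-v}{2}$ gives $c_j-c_i=2\sin\tfrac\pi2(x_i+x_j)\sin\tfrac\pi2(x_i-x_j)$, hence an extra factor $2^{n(n-1)/2}$, and $2^{(n-1)(n-2)/2}\cdot 2^{n(n-1)/2}=2^{(n-1)^2}$, which yields~\eqref{lowestant}. For~\eqref{lowestanthalf} the argument is the same except that one first factors $\cos\bigl(\tfrac\pi2 x_j\bigr)$ out of the $j$-th column (this produces the prefactor $\prod_{i=1}^n\cos(\tfrac\pi2 x_i)$ and leaves the matrix $\bigl[V_{n-i}(c_j)\bigr]$), and the relevant leading coefficients are now $2^k$ for all $k=0,\dots,n-1$, so their product is $2^{n(n-1)/2}$ and the total power of $2$ becomes $2^{n(n-1)/2}\cdot 2^{n(n-1)/2}=2^{n(n-1)}$.

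The bulk of the work is routine (two power-of-$2$ counts); the one step needing a little attention is identifying the row-reversal sign $(-1)^{n(n-1)/2}$ with $(-1)^k$, where $k$ is as in the statement --- a short parity check shows $n(n-1)/2\equiv(n-1)/2\pmod 2$ when $n$ is odd and $n(n-1)/2\equiv n/2\pmod 2$ when $n$ is even, which is precisely the case split defining $k$. As an alternative one could instead prove all three identities by induction on $n$ via a Laplace expansion of the determinants, but the Chebyshev--Vandermonde route above seems the most transparent.
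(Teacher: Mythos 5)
Your proof is correct and follows essentially the same route as the paper: both reduce the determinant to a Vandermonde determinant in $\cos(\pi x_j)$ (the paper via the power-of-cosine expansion, you via the leading coefficients of the Chebyshev polynomials $T_m$ and $V_m$ --- the same triangular change of basis), and then apply $\cos u-\cos v=-2\sin\tfrac{u+v}{2}\sin\tfrac{u-v}{2}$ to obtain the sine products, with matching powers of $2$ and the sign $(-1)^{n(n-1)/2}=(-1)^{\lfloor n/2\rfloor}=(-1)^k$. The only addition is that you spell out the case of $\cos^-_{\rho_2}$ (factoring $\cos(\tfrac{\pi}{2}x_j)$ from each column and using the third-kind Chebyshev polynomials), which the paper leaves as ``similar.''
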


\begin{proof}
We have from definition \eqref{trigfun} that 
$$\cos^-_{\rho_1}(x_1,\dots,x_n)=\det\left(\begin{smallmatrix}
\cos{(\pi(n-1)x_1)}&\cos{(\pi(n-2)x_1)}&\cdots&\cos{(\pi x_1)}&1\\
\cos{(\pi(n-1)x_2)}&\cos{(\pi(n-2)x_2)}&\cdots&\cos{(\pi x_2)}&1\\
\vdots&\vdots&&\vdots&\vdots\\
\cos{(\pi(n-1)x_n)}&\cos{(\pi(n-2)x_n)}&\cdots&\cos{(\pi x_n)}&1
\end{smallmatrix}\right)\,.$$
Using the trigonometric identity for the powers of cosine function
$$
2^{m-1}\cos^m{\theta}=\begin{cases}
\cos{(m\theta)}+\sum_{k=1}^{\frac{m-1}{2}}\binom{m}{k}\cos{(m-2k)\theta}& \text{if $m$ is odd},\\ 
\cos{(m\theta)}+\frac12{ \binom{m}{ \frac{m}{2}}}+\sum_{k=1}^{\frac{m}{2}-1}\binom{m }{ k}\cos{(m-2k)\theta}& \text{if $m$ is even},
\end{cases}
$$
we bring the determinant to the following form:
\begin{align*}
\cos^-_{\rho_1}(x_1,\dots,x_n)
&=(-1)^k2^{\frac{(n-2)(n-1)}{2}}\det\left(\begin{smallmatrix}
1&\cos{(\pi x_1)}&\cdots&\cos^{n-2}{(\pi x_1)}&\cos^{n-1}{(\pi x_1)}\\
1&\cos{(\pi x_2)}&\cdots&\cos^{n-2}{(\pi x_2)}&\cos^{n-1}{(\pi x_2)}\\
\vdots&\vdots&&\vdots&\vdots\\
1&\cos{(\pi x_n)}&\cdots&\cos^{n-2}{(\pi x_n)}&\cos^{n-1}{(\pi x_n)}\\
\end{smallmatrix}\right)\,.
\end{align*}
Taking into account that the last determinant is of the Vandermonde type and that the trigonometric identity 
$$\cos{(\pi x_j)}-\cos{(\pi x_i)}=2\sin{\left(\frac{\pi}{2}(x_i+x_j)\right)}\sin{\left(\frac{\pi}{2}(x_i-x_j)\right)}$$ holds, we obtain that
$$\cos^-_{\rho_1}(x_1,\dots,x_n)= (-1)^k2^{\frac{(n-2)(n-1)}{2}}\prod_{1\leq i<j\leq n}\left(\cos{(\pi x_j)}-\cos{(\pi x_i)}\right),$$
and the identity \eqref{lowestant} follows.
The proof of formula \eqref{lowestanthalf} is similar and
relation \eqref{lowestsymhalf} follows directly from definition \eqref{trigfun}.
\end{proof}
\noindent
Note that the equalities \eqref{lowestant}--\eqref{lowestanthalf} allow us to analyze the zeros of the functions $\cos^-_{\rho_1}$, $\cos^-_{\rho_2}$, and  $\cos^+_{\rho}$:
\begin{itemize}
\item $\cos^-_{\rho_1}(x)=0$ if and only if $x=(x_1,\dots,x_n)\in F(\wt{S}_n^{\mathrm{aff}})$ satisfies $x_i=x_{i+1}$, 
\item $\cos^-_{\rho_2}(x)=0$ if and only if $x=(x_1,\dots,x_n)\in F(\wt{S}_n^{\mathrm{aff}})$ satisfies $x_i=x_{i+1}$ or $x_1=1$, and 
\item $\cos^+_{\rho}(x)=0$ if and only if $x=(x_1,\dots,x_n)\in F(\wt{S}_n^{\mathrm{aff}})$ satisfies $x_1=1$.
\end{itemize}
Observing that all zero values are located on the boundaries of  $F(\wt{S}_n^{\mathrm{aff}})$ we conclude as follows.
\begin{cor}\label{nonzero}
The functions	$\cos^-_{\rho_1}$, $\cos^-_{\rho_2}$, and  $\cos^+_{\rho}$ are nonzero in the interior of the fundamental domain  $F(\wt{S}_n^{\mathrm{aff}})$.
\end{cor}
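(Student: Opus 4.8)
The plan is to read the statement off the product formulas of Proposition~\ref{prop1}. Up to a nonzero multiplicative constant, each of $\cos^-_{\rho_1}$, $\cos^-_{\rho_2}$, $\cos^+_{\rho}$ is a finite product of factors of only two shapes: $\sin\!\left(\frac{\pi}{2}(x_i\pm x_j)\right)$ for $1\le i<j\le n$, and $\cos\!\left(\frac{\pi}{2}x_i\right)$ for $1\le i\le n$. A product of real numbers is nonzero exactly when every factor is nonzero, so it suffices to determine where the individual factors vanish on the closure of $F(\wt{S}_n^{\mathrm{aff}})$ and to check that none of those loci meets the interior.

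First I would record the ranges of the arguments on $F(\wt{S}_n^{\mathrm{aff}})$: from $1\ge x_1\ge\dots\ge x_n\ge 0$ we get $x_i+x_j\in[0,2]$ and $x_i-x_j\in[0,1]$ for $i<j$, and $x_i\in[0,1]$ for every $i$. Hence $\frac{\pi}{2}(x_i+x_j)\in[0,\pi]$, so $\sin\!\left(\frac{\pi}{2}(x_i+x_j)\right)=0$ only when $x_i+x_j\in\{0,2\}$, i.e. $x_i=x_j=0$ or $x_i=x_j=1$; likewise $\sin\!\left(\frac{\pi}{2}(x_i-x_j)\right)=0$ only when $x_i=x_j$, and $\cos\!\left(\frac{\pi}{2}x_i\right)=0$ only when $x_i=1$. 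Because of the ordering, each of these possibilities places the point on a boundary facet: $x_i=x_j$ with $i<j$ forces $x_\ell=x_{\ell+1}$ for some $\ell$, the case $x_i=x_j=0$ likewise forces $x_\ell=x_{\ell+1}$ for some $\ell$, and $x_i=1$ forces $x_1=1$. This recovers the three bulleted descriptions of the zero sets preceding the corollary, and in particular shows that every zero of $\cos^-_{\rho_1}$, $\cos^-_{\rho_2}$, $\cos^+_{\rho}$ lies on $\partial F(\wt{S}_n^{\mathrm{aff}})$.

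It then remains to note that a point in the interior of $F(\wt{S}_n^{\mathrm{aff}})$ satisfies the strict chain $1>x_1>x_2>\dots>x_n>0$, so its coordinates are pairwise distinct and all lie strictly between $0$ and $1$. Consequently none of the factors listed above can vanish there, and all three products are nonzero, as claimed.

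There is no real obstacle in this argument; the only point requiring a moment's care is the factor $\sin\!\left(\frac{\pi}{2}(x_i+x_j)\right)$, where one must use the confinement $\frac{\pi}{2}(x_i+x_j)\in[0,\pi]$ to see that this sine vanishes only at the two endpoints $x_i+x_j=0$ and $x_i+x_j=2$ — both of which are excluded on the interior — rather than at some interior point.
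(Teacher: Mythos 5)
Your proof is correct and follows essentially the same route as the paper: it reads the zero sets of $\cos^-_{\rho_1}$, $\cos^-_{\rho_2}$, and $\cos^+_{\rho}$ off the product formulas of Proposition~\ref{prop1} and observes that every zero lies on the boundary of $F(\wt{S}_n^{\mathrm{aff}})$, which is exactly the argument the paper gives in the bulleted discussion preceding the corollary. Your version merely makes explicit the range checks (e.g.\ $\tfrac{\pi}{2}(x_i+x_j)\in[0,\pi]$) that the paper leaves implicit.
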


\subsection{Continuous orthogonality}\label{seccon}\

The antisymmetric and symmetric cosine functions \eqref{functions}  are mutually continously orthogonal within each family. Denoting the order of the stabilizer of the point $k\in \R^n $ under the action of the permutation group $S_n$ by $H_k$, i.e.,
\begin{equation}\label{Hk}
H_k= \#\set{\sigma k=k}{\sigma\in S_n}	,
\end{equation}
and introducing the symbol $h_k=h_{k_1}\dots h_{k_n}$ by
 $$h_{k_i}=\begin{cases} 1 &\text{if $k_i=0$,}\\ \frac12&\text{otherwise,} 
\end{cases}$$
we obtain
\begin{alignat}{3}
&\int_{F(\wt{S}_n^\mathrm{aff})}\cos^-_{k}(x)\cos^-_{k'}(x)\,dx &\,=\,& h_k\delta_{kk'}\,,&\quad& k,k'\in P^{++}\,, \label{O1}\\
&\int_{F(\wt{S}_n^\mathrm{aff})}\cos^-_{k+\rho}(x)\cos^-_{k'+\rho}(x)\,dx &\,=\,& 2^{-n}\delta_{kk'}\,,&\quad& k,k'\in P^{++}\,, \label{O2}\\
&\int_{F(\wt{S}_n^\mathrm{aff})}\cos^+_{k}(x)\cos^+_{k'}(x)\,dx &\, =\, & h_kH_k\delta_{kk'}\,,&\quad& k,k'\in P^{+}\,,\label{O3}\\
&\int_{F(\wt{S}_n^\mathrm{aff})}\cos^+_{k+\rho}(x)\cos^+_{k'+\rho}(x)\,dx &\, =\, & 2^{-n}H_k\delta_{kk'}\,,&\quad& k,k'\in P^{+}. \label{O4}
\end{alignat}
The orthogonality relations \eqref{O1}, \eqref{O3} are deduced in \cite{KPtrig} from the continuous orthogonality of the ordinary cosine functions $\cos{(\pi m \theta)}$,
$$\int_0^1 \cos{(\pi m \theta)}\cos{(\pi m' \theta)}=h_m\delta_{mm'},\qquad m,m'\in \Z\,.$$
The remaining two orthogonality relations follow from the continuous orthogonality of the cosine functions  $\cos{\left(\pi\left(m+1/2\right)\theta \right)}$,
\begin{equation}\label{1dimortog}
\int_0^1 \cos{\left(\pi\left(m+\frac12\right)\theta \right)}\cos{\left(\pi\left(m'+\frac12\right)\theta \right)}=\frac12\delta_{mm'}\,.
\end{equation}
Since the antisymmetric cosine functions are anti-invariant with respect to all elements of $S_n$ and the group $S_n$ applied on $F(\wt{S}_n^\mathrm{aff})$ gives the cube $[0,1]^n$, we have
$$\int_{F(\wt{S}_n^\mathrm{aff})}\cos^-_{k+\rho}(x)\cos^-_{k'+\rho}(x)\,dx=\frac{1}{n!}\int_{\left[ 0,1\right]^n }\cos^-_{k+\rho}(x)\cos^-_{k'+\rho}(x)\,dx\,.$$
Using the property $\cos^-_{k+\rho}(x)=\cos^-_{\sigma'(k+\rho)}(\sigma'(x))$ for $\sigma'\in S_n$, which follows from \eqref{perm}, together with definition \eqref{trigfun}, we have 
$$
\cos^-_{k+\rho}(x)\cos^-_{k'+\rho}(x)=\sum_{\sigma,\sigma'\in S_n}\mathrm{sgn}\,(\sigma\sigma')\prod_{i=1}^n\cos{\left(\pi\left(k_{\sigma\sigma'(i)}+\frac12\right)x_{\sigma'(i)}\right)}
\cos{\left(\pi\left(k'_i+\frac12\right)x_{\sigma'(i)}\right)}\,.
$$
Therefore, we rewrite the integral \eqref{O2} as
$$\int_{F(\wt{S}_n^\mathrm{aff})}\cos^-_{k+\rho}(x)\cos^-_{k'+\rho}(x)\,dx=\sum_{\sigma\in S_n}\mathrm{sgn}\,(\sigma)\prod_{i=1}^n\int_0^1\cos{\left(\pi\left(k_{\sigma(i)}+\frac12\right)x_i\right)}
\cos{\left(\pi\left(k'_i+\frac12\right)x_i\right)}dx_i\,,$$
where we made the change of variables from $x_{\sigma'(i)}$ to $x_i$. Using the relation \eqref{1dimortog}, we obtain
$$\int_{F(\wt{S}_n^\mathrm{aff})}\cos^-_{k+\rho}(x)\cos^-_{k'+\rho}(x)\,dx=\sum_{\sigma\in S_n}\mathrm{sgn}\,(\sigma)\prod_{i=1}^n\frac12\delta_{k_{\sigma(i)}k'_i}\,.$$
Since $k_{\sigma(i)}=k'_i$ for all $ i \in \{1,\dots,n\}$ if and only if $\sigma$ is the identity permutation and $k_i=k'_i$, the orthogonality relation \eqref{O2} is derived. The orthogonality relation \eqref{O4} is obtained similarly.

\section{Discrete multivariate cosine transforms}\label{DMCT}
\subsection{DCTs of types V--VIII}\label{DCT}\

The one-dimensional DCTs and their Cartesian product generalizations have many applications in mathematics, physics, and engineering. They are of efficient use in various domains of information processing, e.g., image, video, or audio processing. All types of DCTs arise from discretized solutions of the harmonic oscillator equation with different choices of boundary conditions applied on its two-point boundary \cite{Brit,Strang}. These conditions moreover can be applied at grid points or at mid-grid points which are distributed uniformly in the one-dimensional interval. Requiring the Neumann conditions at both points of the boundary at grid points generates the transform DCT I; the Neumann conditions at both points applied at mid-grid generate DCT II. Requiring the Neumann condition at one point and the Dirichlet condition at the other point at grid points generates the transform DCT III; the Neumann condition and the Dirichlet conditions applied at mid-grid generate DCT IV. Note that for these four types of DCTs both boundary conditions are always applied simultaneously at grid points or at mid-grid -- the other DCTs of types V--VIII are obtained similarly by applying one condition at grid point and the other at mid-grid. The DCTs express a function defined on a finite grid of points as a sum of cosine functions with different frequencies and amplitudes.  They have numerous useful properties as the unitary property and convolution properties \cite{Brit}. The multivariate symmetric and antisymmetric generalizations of the cosine transforms of types I--IV are found in \cite{HP,KPtrig}.  
In order to derive the symmetric and antisymmetric DCTs of types V--VIII, which have not so far appeared in the literature, we first review their one-dimensional versions. 
\subsubsection{DCT V}\

For $N\in \N$ are the cosine functions $\cos{\left(\pi k s\right)}$, $k=0,\dots,N-1$, 
defined on the finite grid of points 
\begin{equation}\label{gridV}
s\in\setb{\frac{2r}{2N-1}}{r=0,\dots,N-1}
\end{equation} 
pairwise discretely orthogonal,
\begin{equation}\label{DCTV}
\sum_{r=0}^{N-1}c_r\cos{\left(\frac{2\pi k r}{2N-1}\right)}\cos{\left(\frac{2\pi k' r}{2N-1}\right)}=\frac{2N-1}{4c_k}\delta_{kk'}\,,
\end{equation}
where \begin{equation}\label{coefc}
c_r=\begin{cases}\frac12&\text{if $r=0$ or $r=N$},\\
 1& \text{otherwise}.
 \end{cases}
 \end{equation}
Therefore, any discrete function $f$ given on the finite grid \eqref{gridV} is expressed in terms of cosine functions as
\begin{equation}\label{tranV}
f(s)=\sum_{k=0}^{N-1}A_k\cos{\left(\pi k s\right)},\quad A_k=\frac{4c_k}{2N-1}\sum_{r=0}^{N-1}c_rf\left(\frac{2 r}{2N-1}\right)\cos{\left(\frac{2\pi k r}{2N-1}\right)}\,.
\end{equation}
The formulas \eqref{tranV} determine the transform DCT V.

\subsubsection{DCT VI}\

For $N\in \N$ are the cosine functions $\cos{\left(\pi k s\right)}$, $k=0,\dots,N-1$,
defined on the finite grid of points 
\begin{equation}\label{gridVI}
s\in\setb{\frac{2\left(r+\frac12\right)}{2N-1}}{r=0,\dots,N-1}
\end{equation}
pairwise orthogonal,
\begin{equation}\label{DCTVI}
\sum_{r=0}^{N-1}c_{r+1}\cos{\left(\frac{2\pi k \left(r+\frac12\right)}{2N-1}\right)}\cos{\left(\frac{2\pi k' \left(r+\frac12\right)}{2N-1}\right)}=\frac{2N-1}{4c_k}\delta_{kk'}\,,
\end{equation}
where $c_r$ are determined by \eqref{coefc}. Therefore, any discrete function $f$ given on the finite grid \eqref{gridVI} is expressed in terms of cosine functions as
\begin{equation}\label{tranVI}f(s)=\sum_{k=0}^{N-1}A_k\cos{\left(\pi k s\right)},\quad A_k=\frac{4c_k}{2N-1}\sum_{r=0}^{N-1}c_{r+1}f\left(\frac{2 \left(r+\frac12\right)}{2N-1}\right)\cos{\left(\frac{2\pi k \left(r+\frac12\right)}{2N-1}\right)}\,.
\end{equation}
The formulas \eqref{tranVI} determine the transform DCT VI.

\subsubsection{DCT VII}\

For $N\in \N$ are the cosine functions $\cos{\left(\pi \left(k+\frac12\right) s\right)}$, $k=0,\dots,N-1$,
defined on the finite grid of points \eqref{gridV},
pairwise discretely orthogonal,
\begin{equation}\label{DCTVII}
\sum_{r=0}^{N-1}c_r\cos{\left(\frac{2\pi \left(k+\frac12\right) r}{2N-1}\right)}\cos{\left(\frac{2\pi \left(k'+\frac12\right) r}{2N-1}\right)}=\frac{2N-1}{4c_{k+1}}\delta_{kk'}\,,
\end{equation}
where $c_r$ is determined by \eqref{coefc}. Therefore, any discrete function $f$ given on the finite grid \eqref{gridV} is expressed in terms of cosine functions as
\begin{equation}\label{tranVII}
f(s)=\sum_{k=0}^{N-1}A_k\cos{\left(\pi \left(k+\frac12\right) s\right)},\quad A_k=\frac{4c_{k+1}}{2N-1}\sum_{r=0}^{N-1}c_rf\left(\frac{2 r}{2N-1}\right)\cos{\left(\frac{2\pi \left(k+\frac12\right) r}{2N-1}\right)}\,.
\end{equation}
The formulas \eqref{tranVII} determine the transform DCT VII.

\subsubsection{DCT VIII}\

For $N\in \N$ are the cosine functions
$\cos{\left(\pi \left(k+\frac12\right)s\right)}$, $k=0,\dots,N-1$,
defined on the finite grid of points 
\begin{equation}\label{gridVIII}
s\in\setb{\frac{2\left(r+\frac12\right)}{2N+1}}{r=0,\dots,N-1}
\end{equation} pairwise orthogonal,
\begin{equation}\label{DCTVIII}
\sum_{r=0}^{N-1}\cos{\left(\frac{2\pi \left(k+\frac12\right) \left(r+\frac12\right)}{2N+1}\right)}\cos{\left(\frac{2\pi \left(k'+\frac12\right) \left(r+\frac12\right)}{2N+1}\right)}=\frac{2N+1}{4}\delta_{kk'}\,.
\end{equation}
Therefore, any discrete function $f$ given on the finite grid \eqref{gridVIII} is expressed in terms of cosine functions as
\begin{equation}\label{tranVIII}f(s)=\sum_{k=0}^{N-1}A_k\cos{\left(\pi \left(k+\frac12\right) s\right)},\quad A_k=\frac{4}{2N+1}\sum_{r=0}^{N-1}f\left(\frac{2 \left(r+\frac12\right)}{2N+1}\right)\cos{\left(\frac{2\pi \left(k+\frac12\right) \left(r+\frac12\right)}{2N+1}\right)}\,.
\end{equation}
The formulas \eqref{tranVIII} determine the transform DCT VIII.

\subsection{Antisymmetric discrete multivariate cosine transforms}\label{AMDCT}\

The antisymmetric discrete multivariate cosine transforms (AMDCTs), which can be viewed as antisymmetric multivariate generalizations of DCTs, are derived using the one-dimensional DCTs from Section \ref{DCT}. The four types of AMDCT, connected to DCTs I--IV, are contained in \cite{KPtrig}. Our goal is to complete the list of AMDCTs by developing the remaining four transforms of types V--VIII. First, we introduce the set of labels 
\begin{equation}
D_N\equiv\set{(k_1,\dots,k_n)\in\Z^n}{N-1\geq k_i\geq0,\, i=1,\dots,n}
\end{equation} 
and to any point $(k_1,\dots,k_n)\in D_N$ we assign two values
\begin{align}
d_k &= c_{k_1}\dots c_{k_n},\label{dk}\\
\wt d_k &= c_{k_1+1}\dots c_{k_n+1},\label{dkt}
\end{align}
 where  $c_{k_i}$ are determined by \eqref{coefc}.
The discrete calculus is performed on three types of grids inside $F(\wt{S}_n^\mathrm{aff})$; two of them are subsets of  two cubic grids, 
\begin{align}
 C_N\equiv &\setb{\left(\frac{2r_1}{2N-1},\dots,\frac{2r_n}{2N-1}\right)}{  (r_1,\dots,r_n)\in D_N }, \label{CN} \\ 
\wt C_N\equiv &\setb{\left(\frac{2\left(r_1+\frac12\right)}{2N-1},\dots,\frac{2\left(r_n+\frac12\right)}{2N-1}\right)}{(r_1,\dots,r_n)\in D_N}.\label{CNt}
\end{align} 
To any point $s\in C_N$, which is according to \eqref{CN} labeled by the point $(r_1,\dots ,r_n)\in D_N$, we assign the value
\begin{equation}\label{ep}
	\ep_s\equiv c_{r_1}\dots c_{r_n},
\end{equation}
and to any point $s\in \wt C_N$, which is according to \eqref{CNt} labeled by the point $(r_1,\dots ,r_n)\in D_N$, we assign the value
\begin{equation}\label{eps}
	\wt \ep_s\equiv c_{r_1+1}\dots c_{r_n+1}.
\end{equation}
In the following subsection we detail the proof of the AMDCT V transform. The proofs of the remaining transforms VI--VIII are similar. 
\subsubsection{AMDCT V}\

For $N\in \N$ we consider the antisymmetric cosine functions $\cos^-_k(s)$ labeled by the index set
\begin{equation}\label{parV}
D_N^-\equiv\set{(k_1,\dots,k_n)\in D_N}{ k_1>k_2>\dots>k_n}
\end{equation} 
and restricted to the finite grid of points contained in $C_N \subset F(\wt{S}^{\mathrm{aff}}_n)$,
\begin{equation}\label{funV}
 F_N^{\mathrm{V},-}\equiv\set{\left(\frac{2r_1}{2N-1},\dots,\frac{2r_n}{2N-1}\right)}{(r_1,\dots ,r_n)\in D^-_N} \,.
\end{equation} 
The scalar product of any two functions $f,g:F_N^{\mathrm{V},-}\map\R$ given on the points of the grid $F^{\mathrm{V},-}_N$ is defined by
\begin{equation}\label{scalV}
\langle f,g\rangle=\sum_{s\in F_N^{\mathrm{V},-}}\ep_s f(s)g(s)\,,
\end{equation} 
where $\ep_s$ is given by \eqref{ep}.
Using the orthogonality relation of one-dimensional cosine functions \eqref{DCTV}, we show that the antisymmetric cosine functions labeled by parameters in $k,k'\in D_N^-,$ are pairwise discretely orthogonal, i.e.,
\begin{equation}\label{AMDCTV}
\langle\cos_k^-,\cos_{k'}^-\rangle=\sum_{s\in F_N^{\mathrm{V},-}}\ep_s\cos_k^-(s)\cos_{k'}^-(s)=d_k^{-1}\left(\frac{2N-1}{4}\right)^n\delta_{kk'},
\end{equation}
where $d_k$ is given by \eqref{dk}.
Denoting the set of points 
\begin{equation}\label{funVN}
 F_N\equiv\set{\left(\frac{2r_1}{2N-1},\dots,\frac{2r_n}{2N-1}\right)\in C_N}{ r_1\geq r_2\geq \dots\geq r_n} \,,
\end{equation} 
we observe that the functions $\cos_k^-(s)$, $k\in D_N^-$ vanish on the points $s\in F_N \setminus F_N^{\mathrm{V},-}$ and thus $$\langle\cos_k^-,\cos_{k'}^-\rangle=\sum_{s\in F_N}\ep_s H^{-1}_s \cos_k^-(s)\cos_{k'}^-(s),$$
with the order of the stabilizer $H_s$ determined by \eqref{Hk}.
Acting by all permutations of $S_n$ on the grid $F_N$ we obtain the entire cube $C_N$, i.e., 
$S_nF_N=C_N$,
and therefore we have 
$$\langle\cos_k^-,\cos_{k'}^-\rangle=\frac{1}{n!}\sum_{r_1,\dots,r_n=0}^{N-1}c_{r_1}\dots c_{r_n}\cos_k^-\left(\frac{2r_1}{2N-1},\dots, \frac{2r_n}{2N-1}\right)\cos_{k'}^-\left(\frac{2r_1}{2N-1},\dots, \frac{2r_n}{2N-1}\right).$$
Moreover, the product of two antisymmetric cosine functions labeled by $k,k'\in D_N^-$ is rewritten due to the anti-invariance under permutations as 
$$\cos^-_{k}(x)\cos^-_{k'}(x)=\sum_{\sigma,\sigma'\in S_n}\mathrm{sgn}\,(\sigma\sigma')\prod_{i=1}^n\cos{\left(\pi k_{\sigma\sigma'(i)}x_{\sigma'(i)}\right)}
\cos{\left(\pi k'_i x_{\sigma'(i)}\right)}\,.$$
Together with the $S_n$-invariance of the set $D_N$ this implies that
$$
\langle\cos_k^-,\cos_{k'}^-\rangle=\sum_{\sigma\in S_n}\mathrm{sgn}\,{\sigma}\prod_{i=1}^n\sum_{r_i=0}^{N-1}c_{r_i}\cos{\left(\frac{2\pi k_{\sigma(i)}r_i}{2N-1}\right)}\cos{\left(\frac{2\pi k'_{i}r_i}{2N-1}\right)}\,.
$$
Finally, we apply the orthogonality relation of one-dimensional cosine functions \eqref{DCTV} to obtain \eqref{AMDCTV},
$$\langle\cos_k^-,\cos_{k'}^-\rangle=\sum_{\sigma\in S_n}\mathrm{sgn}\,(\sigma)\prod_{i=1}^n\frac{2N-1}{4
c_{k_{\sigma(i)}}}\delta_{k_{\sigma(i)}k'_i}=d_k^{-1}\left(\frac{2N-1}{4}\right)^n\delta_{kk'}\,.$$

Due to the relation \eqref{AMDCTV}, we expand any function $f: F_N^{\mathrm{V},-}\map\R$ in terms of antisymmetric cosine functions as follows:
$$f(s)=\sum_{k\in D_N^-}A_k\cos^-_k(s)\quad\text{with}\quad A_k=d_k\left(\frac{4}{2N-1}\right)^n\sum_{s\in F_N^{\mathrm{V},-}}\ep_sf(s)\cos_k^-(s)\,.$$
Validity of the expansion follows from the fact that the number of points in $D_N^-$ is equal to the number of points in $F^{\mathrm{V},-}_N$ and thus the functions $\cos^-_k$ with $k\in D_N^-$ form an orthogonal basis of the space of all functions $f:F^{\mathrm{V},-}_N\map\R$ with the scalar product \eqref{scalV}. The remaining transforms AMDCT VI--VIII are deduced similarly.

\subsubsection{AMDCT VI}\

For $N\in \N$ we consider the antisymmetric cosine functions $\cos^-_k(s)$ labeled by the index set $k\in D^-_N$ and
restricted to the finite grid of points contained in $\wt C_N\subset F(\wt{S}^{\mathrm{aff}}_n)$, 
\begin{equation}\label{funVI}
F^{\mathrm{VI},-}_N\equiv\set{\left(\frac{2\left(r_1+\frac12\right)}{2N-1},\dots,\frac{2\left(r_n+\frac12\right)}{2N-1}\right)}{(r_1,\dots,r_n)\in D^-_N}.
\end{equation}
The antisymmetric cosine functions labelled by parameters in $k,k'\in D_N^-,$ are pairwise discretely orthogonal on the grid $F^{\mathrm{VI},-}_N$, i.e.,
\begin{equation}
\sum_{s\in F_N^{\mathrm{VI},-}}\wt\ep_s\cos_k^-(s)\cos_{k'}^-(s)=d_k^{-1}\left(\frac{2N-1}{4}\right)^n\delta_{kk'},
\end{equation}
where $\wt \ep_s$ is given by \eqref{eps}.
Therefore, we expand any function $f: F_N^{\mathrm{VI},-}\map\R$ in terms of antisymmetric cosine functions as
$$f(s)=\sum_{k\in D^-_N}A_k\cos^-_k(s)\quad\text{with}\quad A_k=d_k\left(\frac{4}{2N-1}\right)^n\sum_{s\in F^{\mathrm{VI},-}_N}\wt \ep_sf(s)\cos_k^-(s)\,.$$

\subsubsection{AMDCT VII}\

For  $N\in \N$ are the antisymmetric cosine functions $\cos^-_{k+\rho}(s)$ , $k\in D^-_N$, 
and $\rho=\left(\frac12,\dots,\frac12\right)$ restricted to the finite grid of points  $F^{\mathrm{VII},-}_N\equiv F_N^{\mathrm{V},-}\subset C_N$ pairwise discretely orthogonal with respect to the scalar product \eqref{scalV}, i.e.,
\begin{equation}\label{AMDCTVII}
\sum_{s\in F_N^{\mathrm{VII},-}}\ep_s\cos_{k+\rho}^-(s)\cos_{k'+\rho}^-(s)=\wt d_k^{-1}\left(\frac{2N-1}{4}\right)^n\delta_{kk'}\,,
\end{equation}
where $\wt d_k$ is given by \eqref{dkt}.
Therefore, we expand any function $f: F_N^{\mathrm{VII},-}\map\R$ in terms of antisymmetric cosine functions as follows:
$$f(s)=\sum_{k\in D^-_N}A_k\cos^-_{k+\rho}(s)\quad\text{with}\quad A_k=\wt d_k\left(\frac{4}{2N-1}\right)^n\sum_{s\in F^{\mathrm{VII},-}_N}\ep_sf(s)\cos_{k+\rho}^-(s)\,.$$

\subsubsection{AMDCT VIII}\

For  $N\in \N$ are the antisymmetric cosine functions $\cos^-_{k+\rho}(s)$ , $k\in D^-_N$, 
and $\rho=\left(\frac12,\dots,\frac12\right)$ restricted to the finite grid of points 
\begin{equation}\label{gridVIII-}
F^{\mathrm{VIII},-}_N\equiv\set{\left(\frac{2\left(r_1+\frac12\right)}{2N+1},\dots,\frac{2\left(r_n+\frac12\right)}{2N+1}\right)}{(r_1,\dots,r_n)\in D^-_N}
\end{equation} pairwise discretely orthogonal, i.e., for any $k,k'\in D^-_N$ it holds that
\begin{equation}
\sum_{s\in F^{\mathrm{VIII},-}_N}\cos_{k+\rho}^-(s)\cos_{k'+\rho}^-(s)=\left(\frac{2N+1}{4}\right)^n\delta_{kk'}\,.
\end{equation}
Therefore, we expand any function $f: F_N^{\mathrm{VIII},-}\map\R$ in terms of antisymmetric cosine functions as follows:
$$f(s)=\sum_{k\in D^-_N}A_k\cos^-_{k+\rho}(s)\quad\text{with}\quad A_k=\left(\frac{4}{2N+1}\right)^n\sum_{s\in F^{\mathrm{VIII},-}_N}f(s)\cos_{k+\rho}^-(s)\,.$$

\subsection{Interpolations by antisymmetric multivariate cosine functions}\

Suppose we have a real-valued function $f$ given on $F(\wt{S}^{\mathrm{aff}}_n)$. In Section \ref{AMDCT} we defined three finite grids in $F(\wt{S}^{\mathrm{aff}}_n)$, namely, $F^{\mathrm{VI},-}_N, F^{\mathrm{V},-}_N\equiv F^{\mathrm{VII},-}_N$, and $F^{\mathrm{VIII},-}_N$. We are interested in finding the interpolating polynomial of $f$ in the form of a finite sum of antisymmetric  multivariate cosine functions labeled by $k$ or by $k+\rho$, $k\in D^-_N$ in such a way that it coincides with $f$ on one of the grids in $F(\wt{S}^{\mathrm{aff}}_n)$. We distinguish between four types of interpolating polynomials defined for $x\in F(\wt{S}^{\mathrm{aff}}_n)$ and satisfying different conditions:
\begin{equation}
\begin{alignedat}{3}
&\psi^{\mathrm{V},-}_N(x)&=&\sum_{k\in D^-_N}B_k\cos_k^-(x)\,,&\qquad\qquad& \psi^{\mathrm{V},-}_N(s)=f(s)\,,\quad s\in F^{\mathrm{V},-}_N\,,\\
&\psi^{\mathrm{VI},-}_N(x)&=&\sum_{k\in D^-_N}B_k\cos_{k+\rho}^-(x)\,,&\qquad\qquad& \psi^{\mathrm{VI},-}_N(s)=f(s)\,,\quad s\in F^{\mathrm{VI},-}_N\,,\\
&\psi^{\mathrm{VII},-}_N(x)&=&\sum_{k\in D^-_N}B_k\cos_k^-(x)\,,&\qquad\qquad& \psi^{\mathrm{VII},-}_N(s)=f(s)\,,\quad s\in F^{\mathrm{VII},-}_N\,,\\
&\psi^{\mathrm{VIII},-}_N(x)&=&\sum_{k\in D^-_N}B_k\cos_{k+\rho}^-(x)\,,&\qquad\qquad& \psi^{\mathrm{VIII},-}_N(s)=f(s)\,,\quad s\in F^{\mathrm{VIII},-}_N\,.\\
\end{alignedat}
\end{equation} 
According to Section \ref{AMDCT}, the coefficients $B_k$ are chosen to be equal to corresponding $A_k$. In fact, it is not possible to have other values of the coefficients $B_k$ since it would contradict the fact that the antisymmetric cosine functions labeled by $k$ or by $k+\rho$ with $k\in D^-_N$ are basis vectors of the space of functions given on the corresponding grid.

\begin{example}

For $n=3$ we choose a model function $f$ and interpolate it by $\psi^{\mathrm{V},-}_N(x,y,z)$ and $\psi^{\mathrm{VII},-}_N(x,y,z)$ for various values of $N$. The model function $f$ is a smooth function given by
\begin{equation}\label{modfun}
f_{\alpha,\beta,(x_0,y_0,z_0)}(x,y,z)\equiv\exp{\left(-\frac{(x-x_0)^2+(y-y_0)^2+(z-z_0)^2)}{2\alpha^2}+\beta\right)}
\end{equation}
with fixed values of the parameters $\alpha,\beta,(x_0,y_0,z_0)$ as
\begin{equation}\label{vp}
\alpha=0.079\,,\quad\beta=3\,,\quad(x_0,y_0,z_0)=\left(0.8,0.54,0.3\right).
\end{equation}
Figure \ref{model} contains the function $f$ for fixed parameters \eqref{vp} in the fundamental domain $F(\wt{S}_3^{\mathrm{aff}})$ and cut by the plane $z=\frac{1}{3}$. 
\begin{figure}
\includegraphics[width=2.5cm]{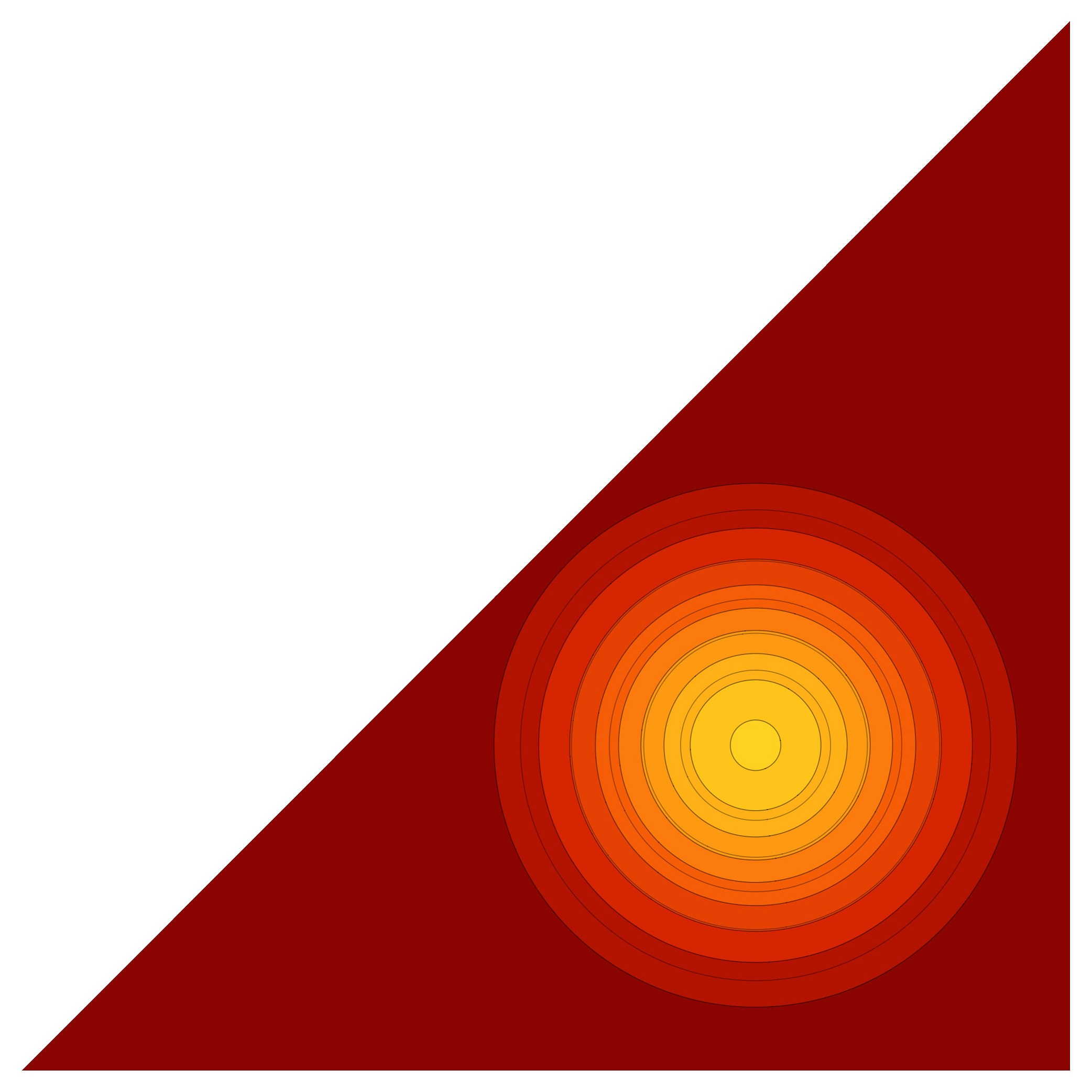}
\includegraphics[width=4cm]{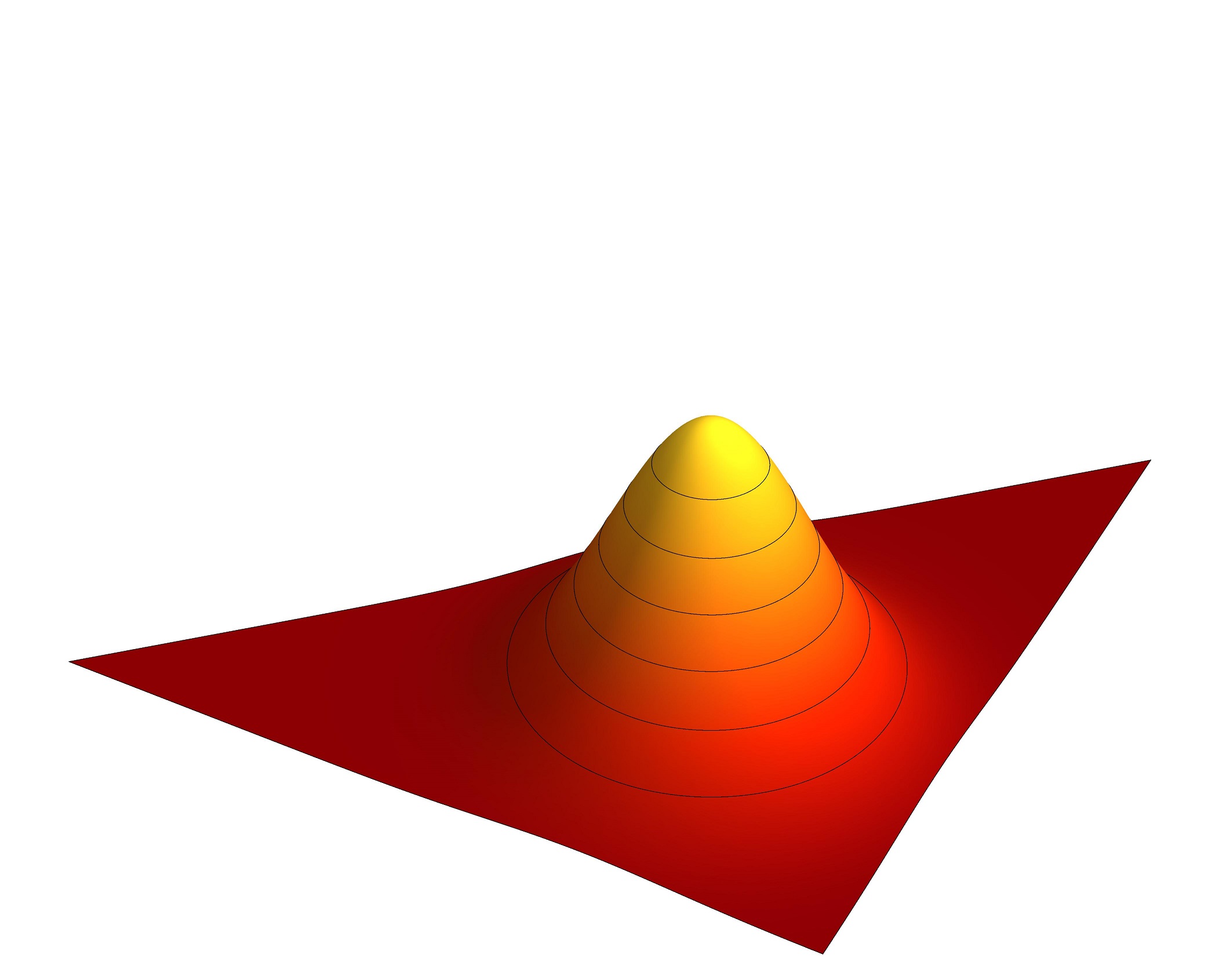}
\caption{The cut of the model function \eqref{modfun} with fixed parameters \eqref{vp} and $z=\frac{1}{3}$.}
\label{model}
\end{figure}
The antisymmetric interpolating polynomials of type V and VII for $N=5,10,15$ are depicted in Figures \ref{fig1} and \ref{fig2}.
\begin{figure}
\includegraphics[width=2.5cm]{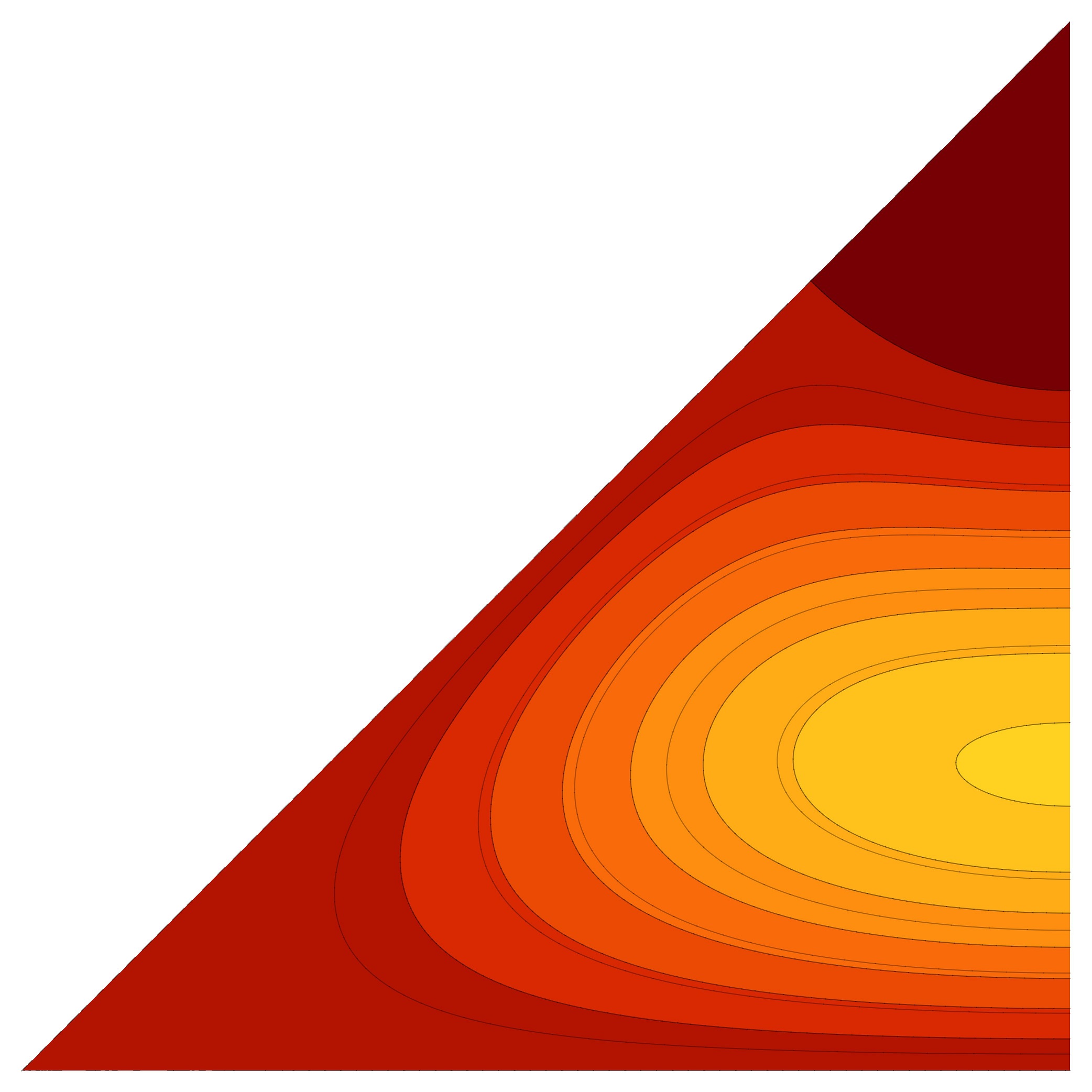}
\hspace{1.3cm}
\includegraphics[width=2.5cm]{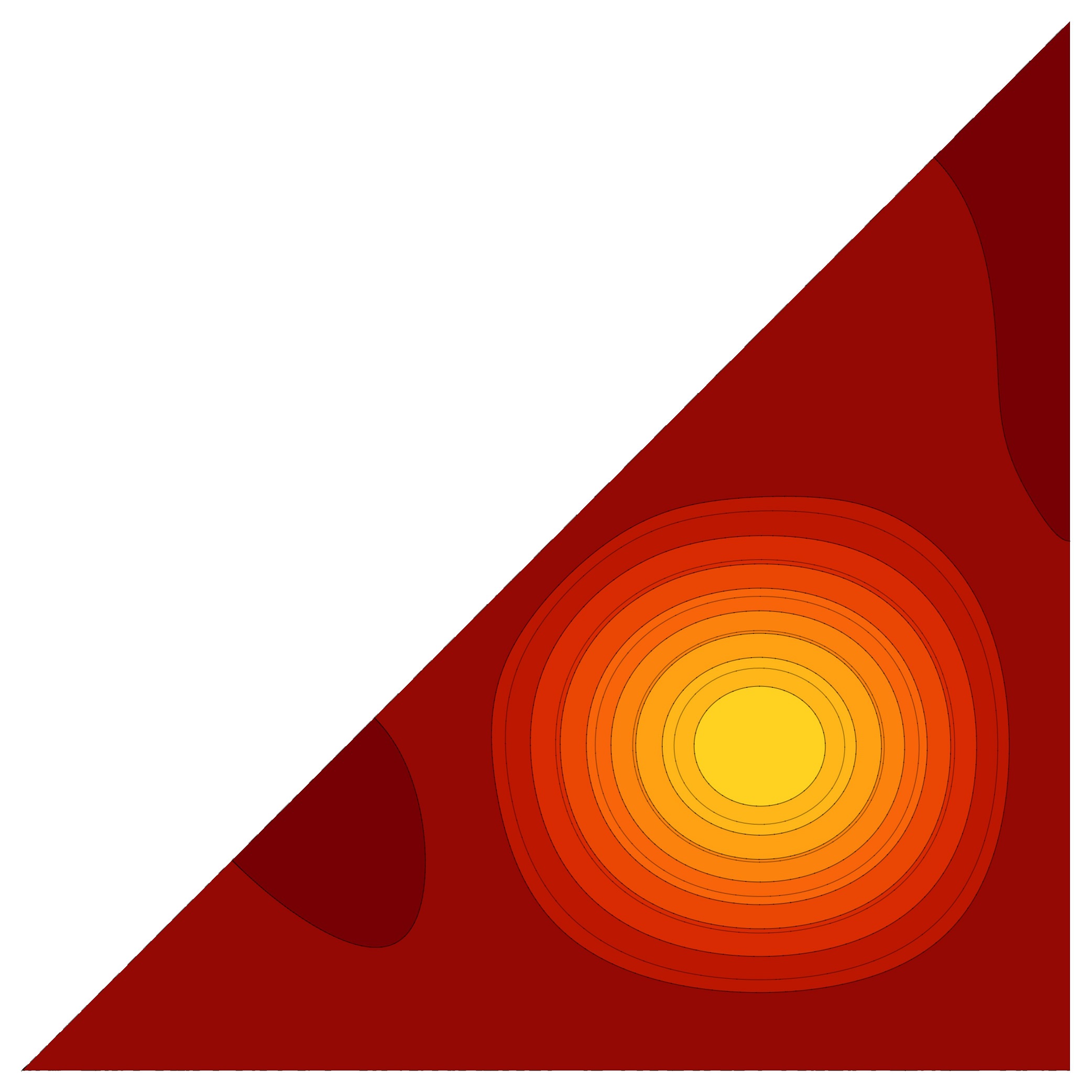}
\hspace{1.3cm}
\includegraphics[width=2.5cm]{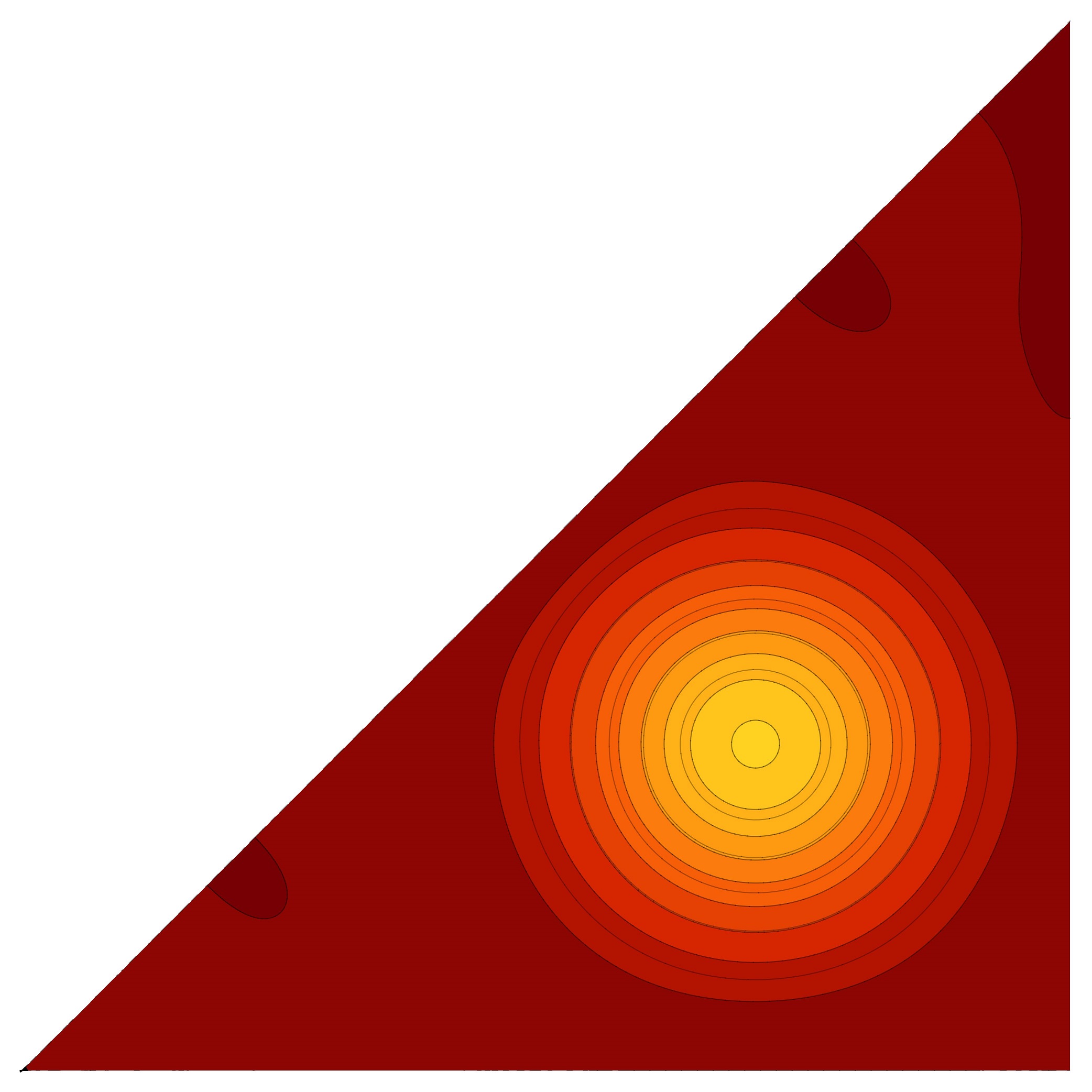}
\\
\includegraphics[width=4cm]{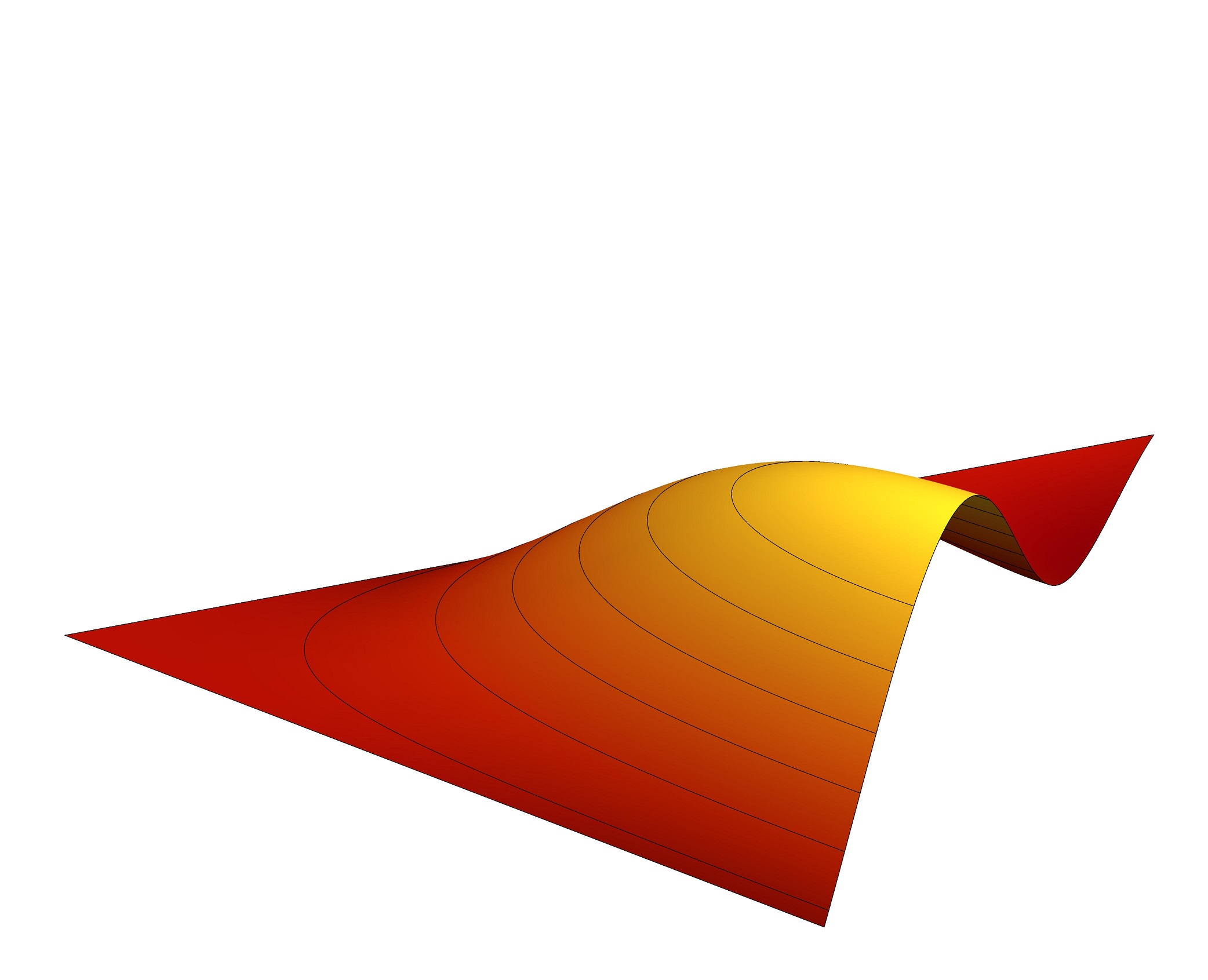}
\includegraphics[width=4cm]{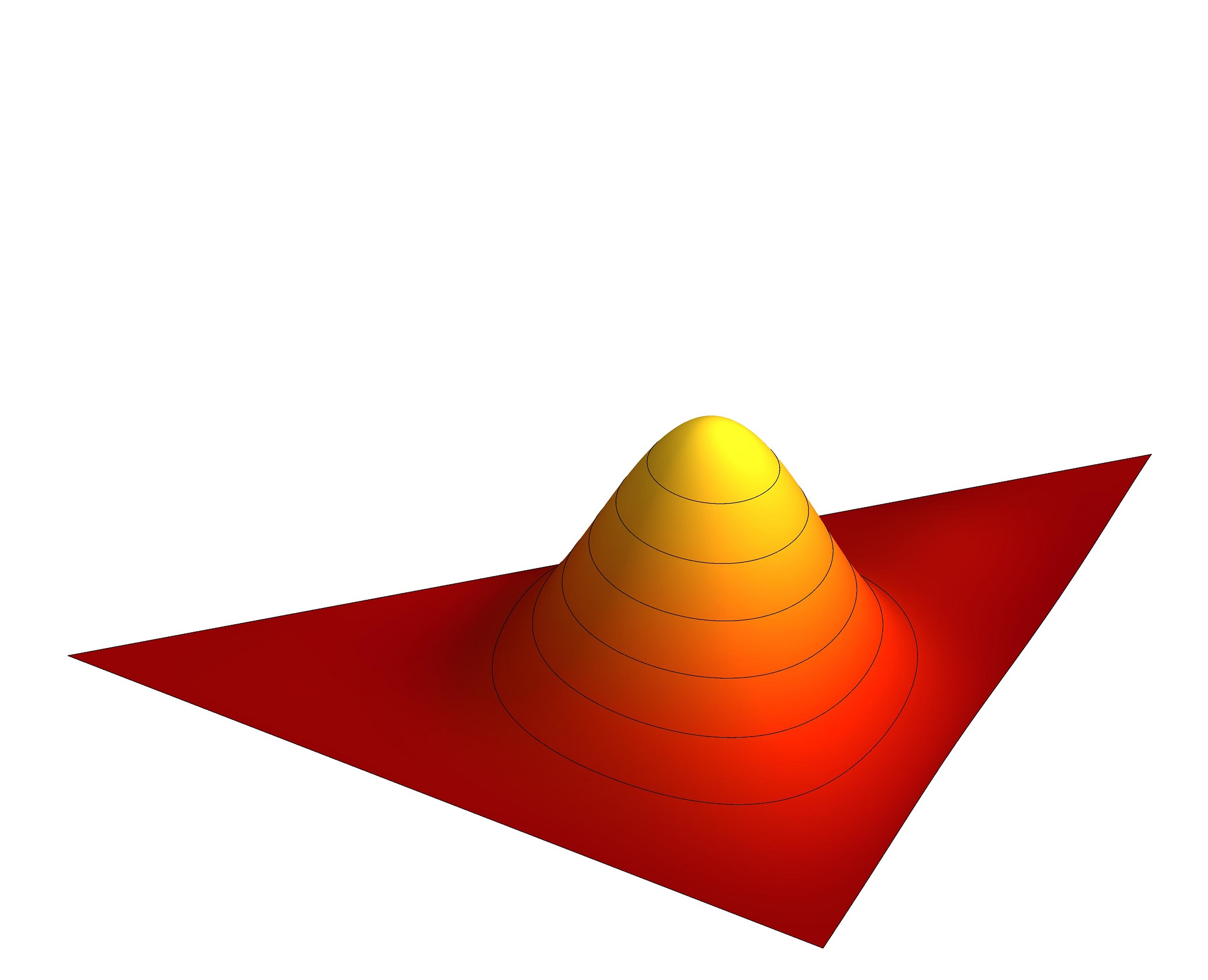}
\includegraphics[width=4cm]{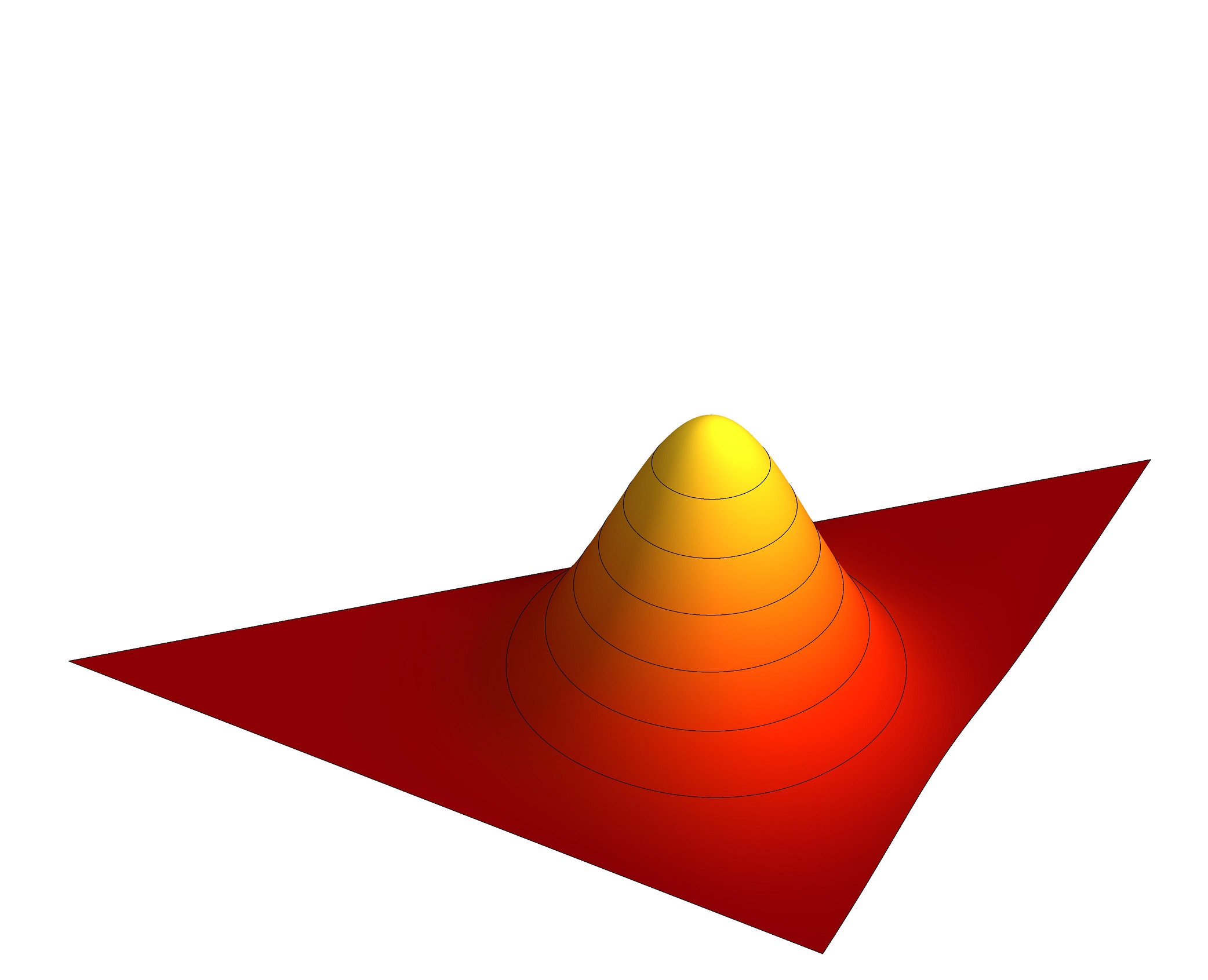}
\caption{The antisymmetric cosine interpolating polynomials $\psi^{\mathrm{V},-}_N(x,y,\frac{1}{3})$ of the model function \eqref{modfun} shown in Fig. \ref{model} with $N=5,10,15$.}
\label{fig1}
\end{figure}

\begin{figure}[ht]
\includegraphics[width=2.5cm]{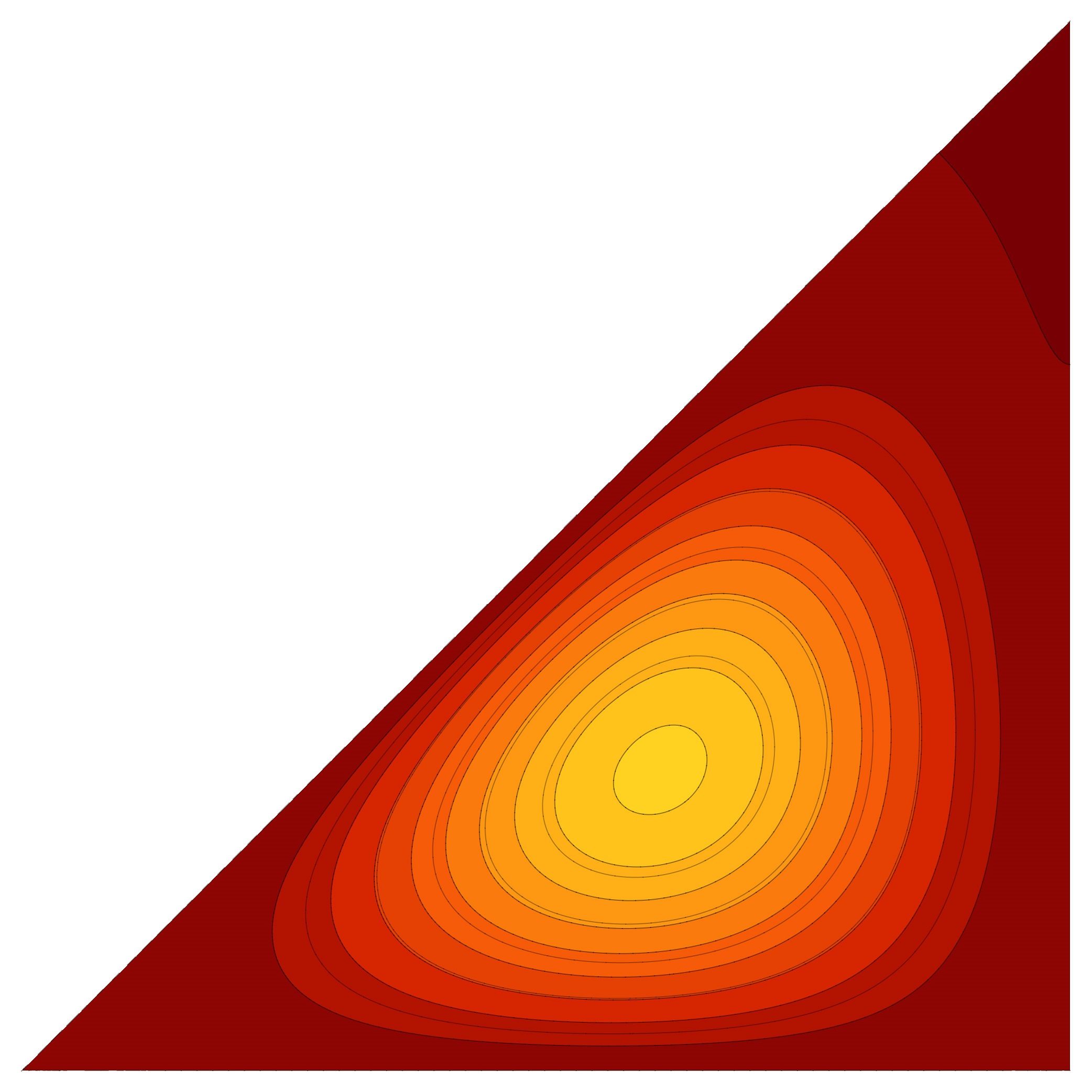}
\hspace{1.3cm}
\includegraphics[width=2.5cm]{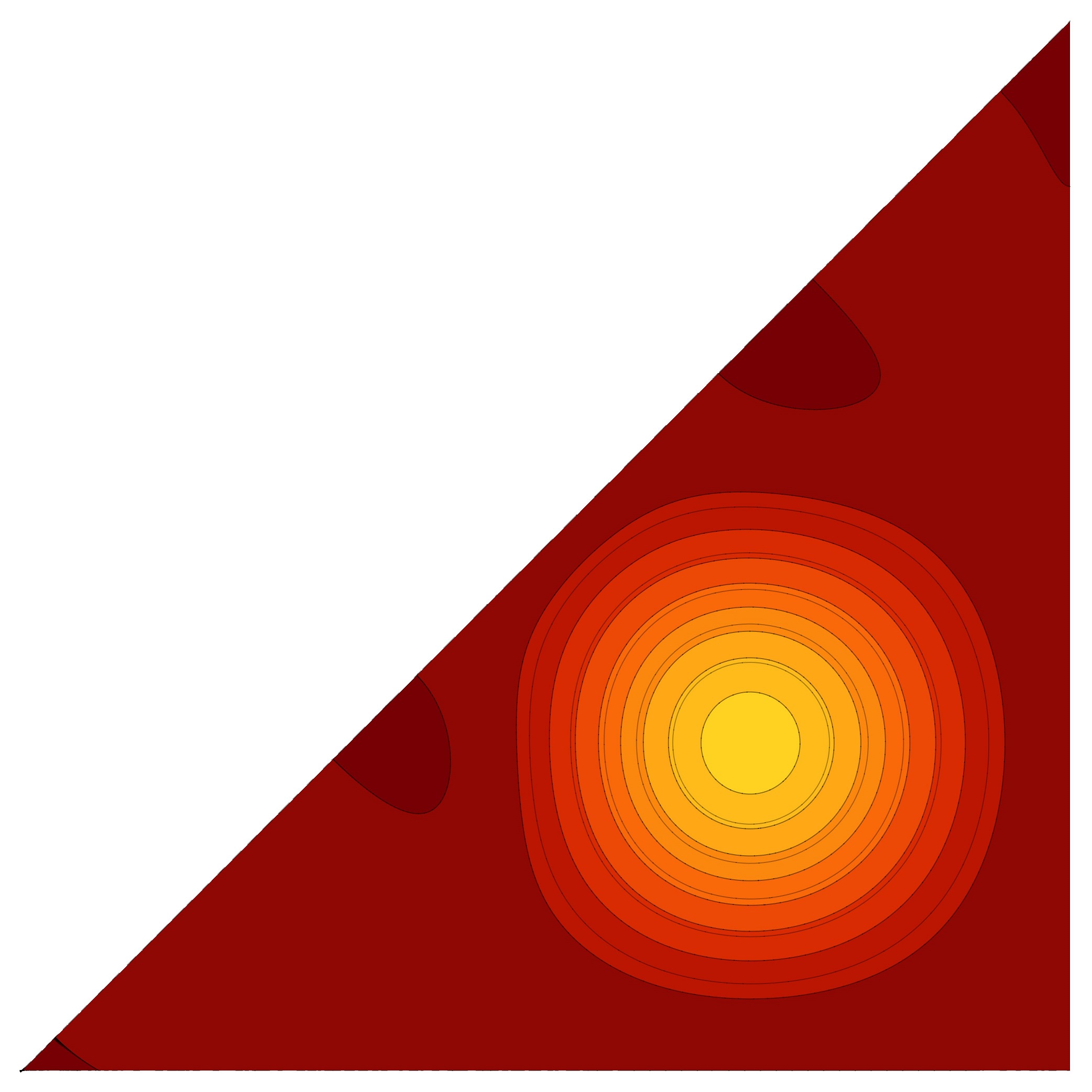}
\hspace{1.3cm}
\includegraphics[width=2.5cm]{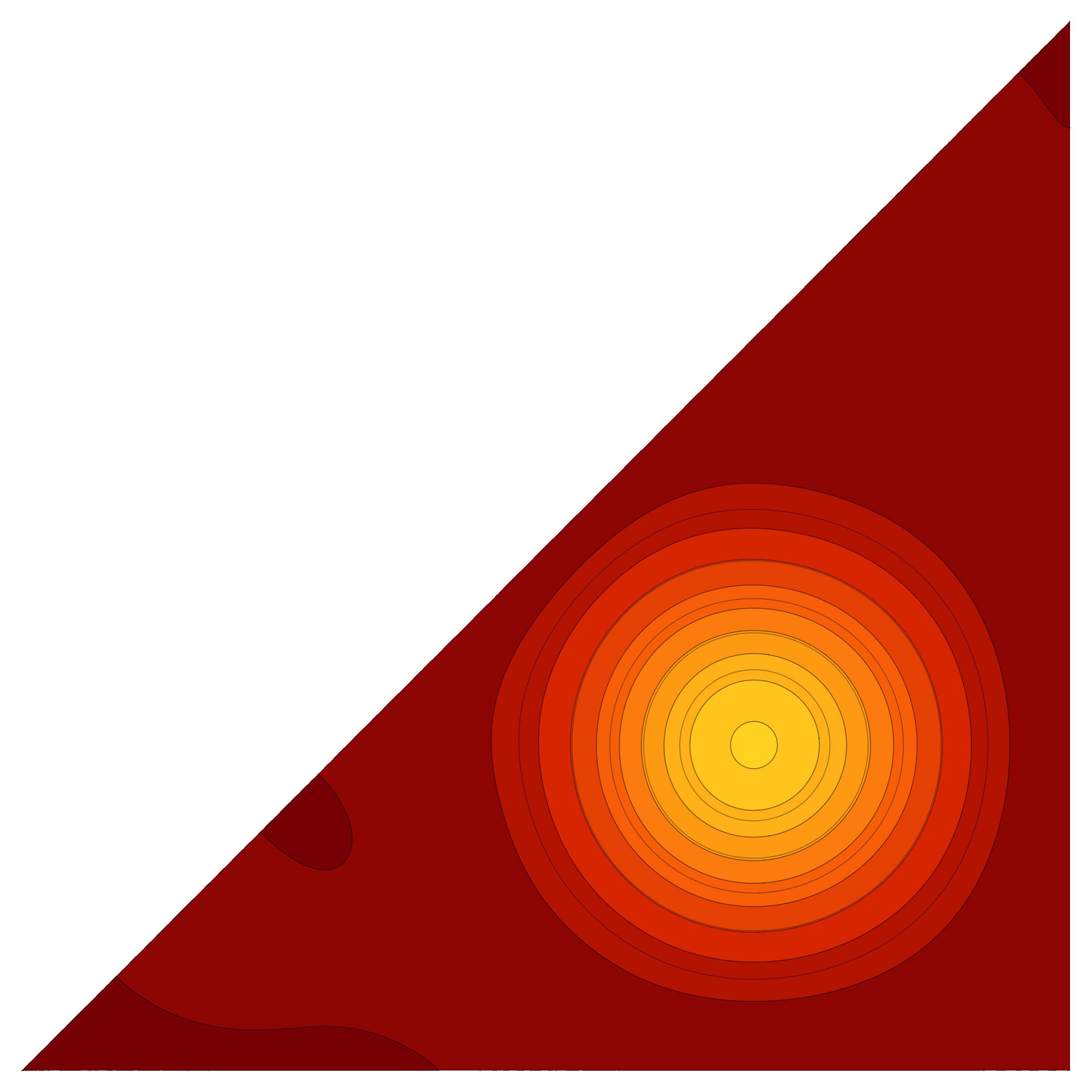}
\\
\includegraphics[width=4cm]{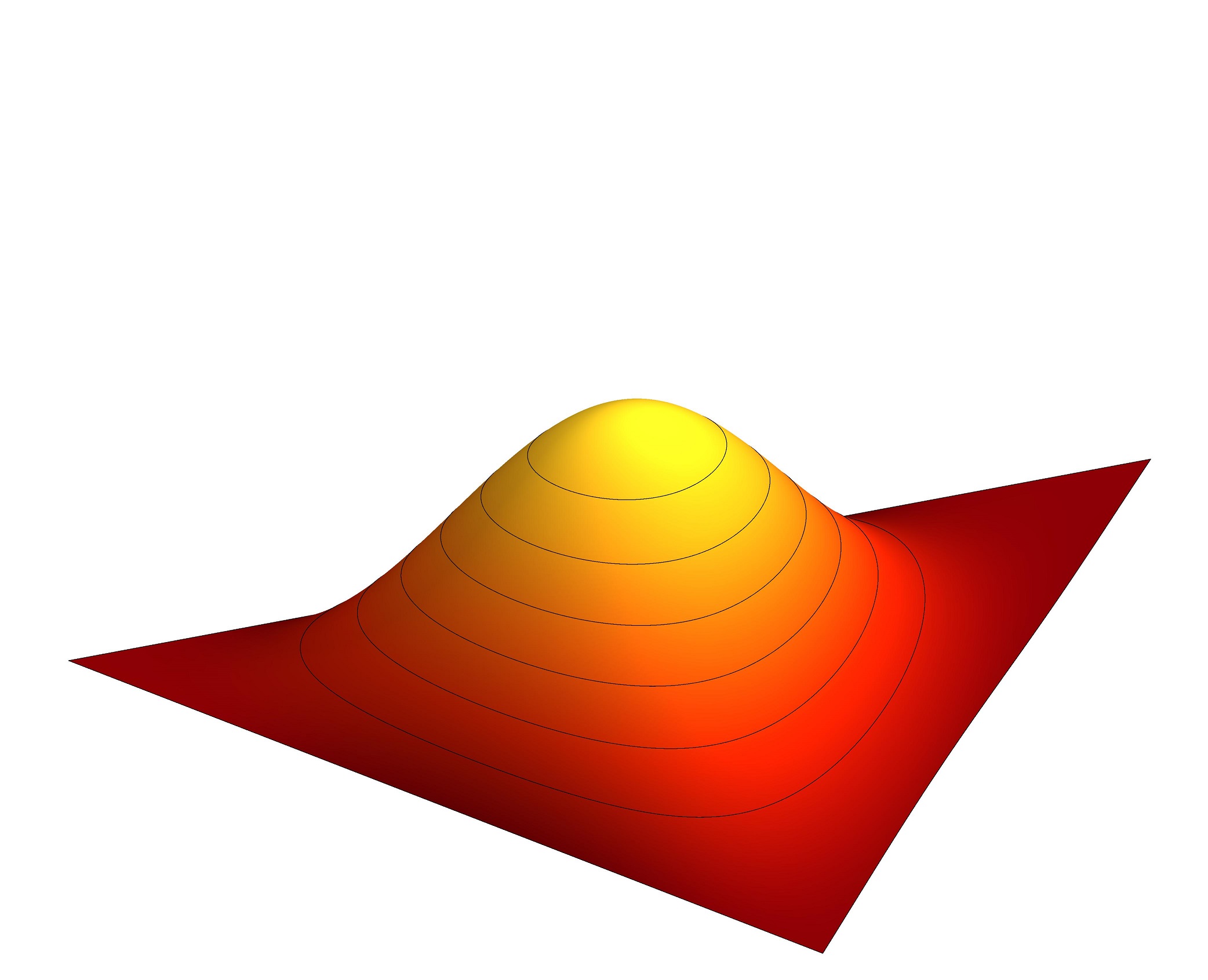}
\includegraphics[width=4cm]{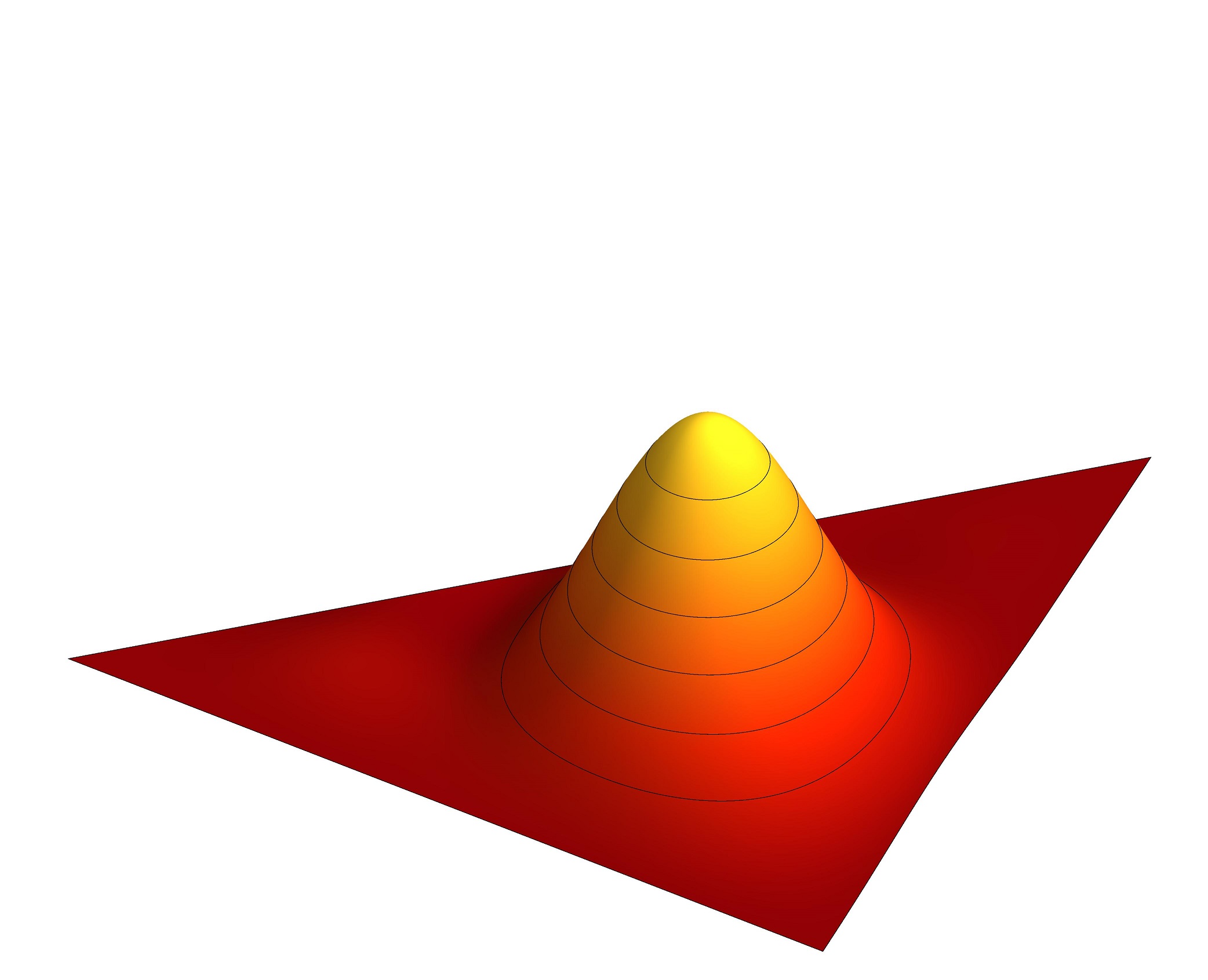}
\includegraphics[width=4cm]{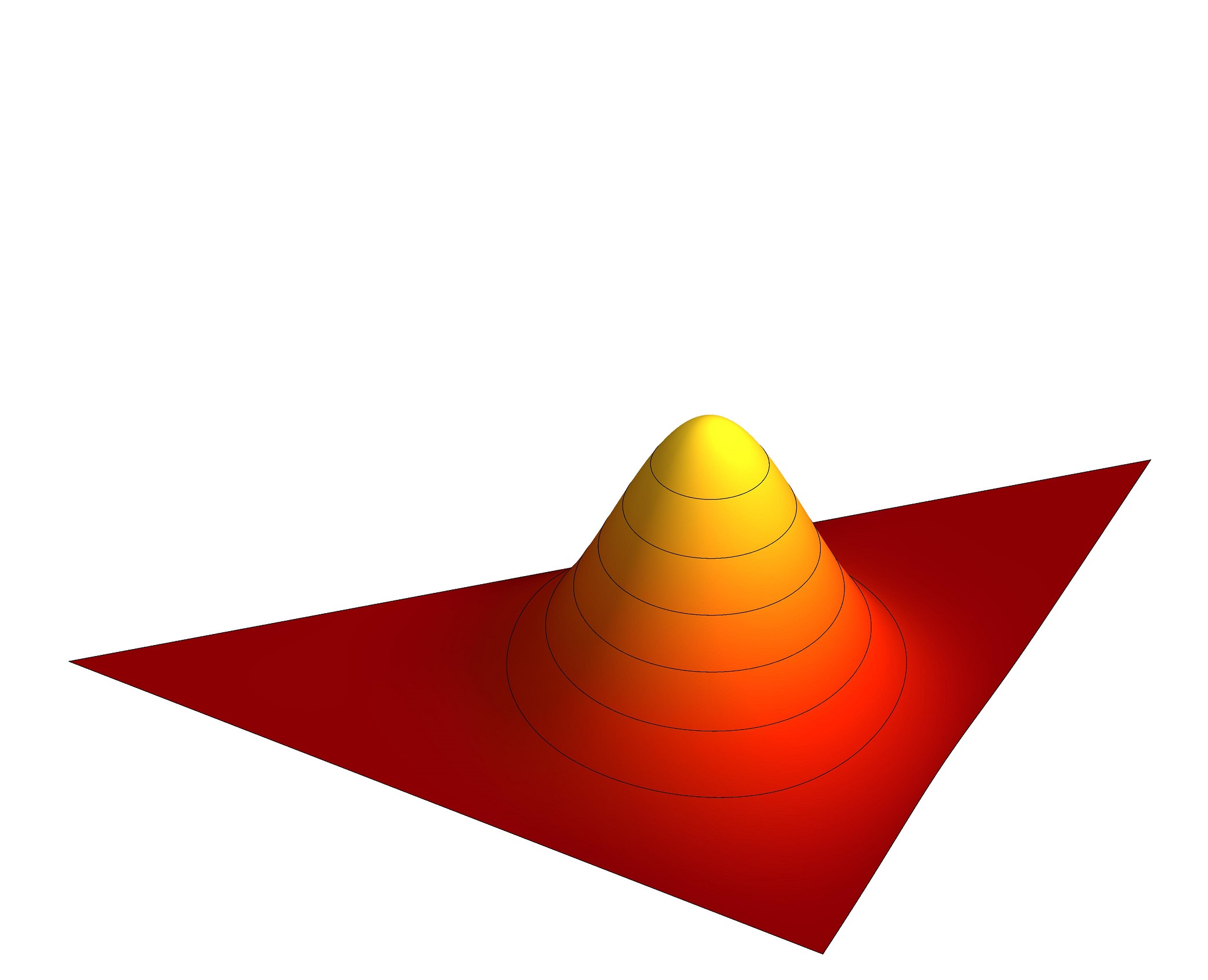}
\caption{The antisymmetric cosine interpolating polynomials $\psi^{\mathrm{VII},-}_N(x,y,\frac{1}{3})$ of the model function \eqref{modfun} shown in Fig. \ref{model} with $N=5,10,15$.}
\label{fig2}
\end{figure}

Integral error estimates of the  approximations of the model function \eqref{modfun} by the antisymmetric interpolating polynomials of types V and VII, 
$$\int_{F(\wt{S}_n^\mathrm{aff})}\abs{f(x,y,z)-\psi^{\mathrm{V},-}_N(x,y,z)}^2\,dx\,dy\,dz\,,\quad \int_{F(\wt{S}_n^\mathrm{aff})}\abs{f(x,y,z)-\psi^{\mathrm{VII},-}_N(x,y,z)}^2\,dx\,dy\,dz\,,$$
are shown in Table \ref{interror} for several values of $N$.
\end{example}

\subsection{Symmetric discrete multivariate cosine transforms}\label{SMDCT}\

The symmetric discrete multivariate cosine transforms (SMDCTs), which can be viewed as symmetric multivariate generalizations of DCTs, are derived using the one-dimensional DCTs from Section \ref{DCT}. The four types of SMDCT, connected to DCTs I--IV, are contained in \cite{KPtrig}. Our goal is to complete the list of SMDCTs by developing the remaining four transforms of types V--VIII.
Note that the coefficients $\ep_s, \wt\ep_s,$ and $d_k, \wt d_k$  are given by \eqref{ep}, \eqref{eps} and  \eqref{dk}, \eqref{dkt}, respectively. In the following subsection we detail the proof of the SMDCT~V transform. The proofs of the remaining transforms VI--VIII are similar. 

\subsubsection{SMDCT V}\

For $N\in \N$ we consider the symmetric cosine functions $\cos^+_k(s)$ labeled by the index set
\begin{equation}\label{par+}
 D^+_N\equiv\set{(k_1,\dots,k_n)\in D_N}{ k_1\geq k_2\geq\dots\geq k_n}
\end{equation} 
and restricted to the finite grid of points contained in $C_N\subset F(\wt{S}^{\mathrm{aff}}_n)$,
\begin{equation}\label{fun+}
F^{\mathrm{V},+}_N\equiv\set{\left(\frac{2r_1}{2N-1},\dots,\frac{2r_n}{2N-1}\right)}{(r_1,\dots,r_n)\in D^+_N}\,. 
\end{equation}
The scalar product of any two functions $f,g:F_N^{\mathrm{V},+}\map\R$ given on the points of the grid $F^{\mathrm{V},+}_N$ is defined by
\begin{equation}\label{scalV+}
\langle f,g\rangle\equiv\sum_{s\in F^{\mathrm{V},+}_N}\ep_sH_s^{-1}f(s)g(s)\,,\quad f,g:F^{\mathrm{V},+}_N\map\R\,,
\end{equation}
where $H_s$ is given by \eqref{Hk}. Using the discrete orthogonality of one-dimensional cosine functions \eqref{DCTV} we show that the symmetric cosine functions labeled by parameters in $k,k'\in D^+_N$ are pairwise discretely orthogonal, i.e.,
\begin{equation}\label{SMDCTV}
\langle\cos_k^+,\cos_{k'}^+\rangle=\sum_{s\in F_N^{\mathrm{V},+}}\ep_sH_s^{-1}\cos_k^+(s)\cos_{k'}^+(s)=\frac{H_k}{d_k}\left(\frac{2N-1}{4}\right)^n\delta_{kk'}\,.
\end{equation}

Note that acting by all permutations in $S_n$ on the grid $F_N^+$ we obtain the whole finite grid $C_N$,
where the points $s\in F_N^+$, which are invariant with respect to some nontrivial permutation in $S_n$, emerge exactly $H_s$ times. Since the symmetric cosine functions are symmetric with respect to $S_n$, we obtain
$$\langle\cos_k^+,\cos_{k'}^+\rangle=\frac{1}{n!}\sum_{r_1,\dots,r_n=0}^{N-1}c_{r_1}\dots c_{r_n}\cos_k^+\left(\frac{2r_1}{2N-1},\dots, \frac{2r_n}{2N-1}\right)\cos_{k'}^+\left(\frac{2r_1}{2N-1},\dots, \frac{2r_n}{2N-1}\right)\,,$$
where $n!$ represents the order of the group $S_n$.
Moreover, the product of two symmetric cosine functions labeled by $k,k'\in D_N^+$ is rewritten due to the invariance under permutations as 
$$\cos^+_{k}(x)\cos^+_{k'}(x)=\sum_{\sigma,\sigma'\in S_n}\prod_{i=1}^n\cos{\left(\pi k_{\sigma\sigma'(i)}x_{\sigma'(i)}\right)}
\cos{\left(\pi k'_i x_{\sigma'(i)}\right)}\,.$$
Together with the $S_n$-invariance of the set $D_N$ this implies that
$$
\langle\cos_k^+,\cos_{k'}^+\rangle=\sum_{\sigma\in S_n}\prod_{i=1}^n\sum_{r_i=0}^{N-1}c_{r_i}\cos{\left(\frac{2\pi k_{\sigma(i)}r_i}{2N-1}\right)}\cos{\left(\frac{2\pi k'_{i}r_i}{2N-1}\right)}\,.
$$
Finally, we apply the one-dimensional orthogonality relation \eqref{DCTV} to obtain \eqref{SMDCTV},
$$\langle\cos_k^+,\cos_{k'}^+\rangle=\sum_{\sigma\in S_n}\prod_{i=1}^n\frac{2N-1}{4c_{k_{\sigma(i)}}}\delta_{k_{\sigma(i)}k'_i}=\frac{H_k}{d_k}\left(\frac{2N-1}{4}\right)^n\delta_{kk'}\,.$$

Due to the relation \eqref{SMDCTV}, we expand any function $f: F_N^{\mathrm{V},+}\map\R$ in terms of symmetric cosine functions as
$$f(s)=\sum_{k\in D^+_N}A_k\cos^+_k(s),\quad\quad A_k=\frac{d_k}{H_k}\left(\frac{4}{2N-1}\right)^n\sum_{s\in F^{\mathrm{V},+}_N}\ep_s H_s^{-1}f(s)\cos_k^+(s)\,.$$
Validity of the expansion follows from the fact that the number of points in $D_N^+$ is equal to the number of points in $F^{\mathrm{V},+}_N$ and thus the functions $\cos^+_k$ with $k\in D_N^+$ form an orthogonal basis of the space of all functions $f:F^{\mathrm{V},+}_N\map\R$ with the scalar product \eqref{scalV+}. The remaining transforms SMDCT VI--VIII are deduced similarly.

\subsubsection{SMDCT VI}\

For $N\in \N$ we consider the antisymmetric cosine functions $\cos^+_k(s)$ labeled by the index set $k\in D^+_N$ and
restricted to the finite grid of points contained in $\wt C_N\subset F(\wt{S}^{\mathrm{aff}}_n)$, 
\begin{equation} 
F^{\mathrm{VI},+}_N\equiv\set{\left(\frac{2\left(r_1+\frac12\right)}{2N-1},\dots,\frac{2\left(r_n+\frac12\right)}{2N-1}\right)}{(r_1,\dots, r_n)\in D^+_N}\,.
\end{equation}
The antisymmetric cosine functions labeled by parameters in $k,k'\in D_N^+,$ are pairwise discretely orthogonal on the grid $F^{\mathrm{VI},+}_N$, i.e., 
\begin{equation}
\sum_{s\in F^{\mathrm{VI},+}_N}\wt\ep_sH_s^{-1}\cos_k^+(s)\cos_{k'}^+(s)=\frac{H_k}{d_k}\left(\frac{2N-1}{4}\right)^n\delta_{kk'}\,.
\end{equation}
Therefore, we expand any function $f: F_N^{\mathrm{VI},+}\map\R$ in terms of symmetric cosine functions as follows:
$$f(s)=\sum_{k\in D^+_N}A_k\cos^+_k(s)\quad\text{with}\quad A_k=\frac{d_k}{H_k}\left(\frac{4}{2N-1}\right)^n\sum_{s\in F^{\mathrm{VI},+}_N}\wt\ep_sH_s^{-1}f(s)\cos_k^-(s)\,.$$

\subsubsection{SMDCT VII}\

For  $N\in \N$ are the symmetric cosine functions $\cos^+_{k+\rho}(s)$ , $k\in D^+_N$, 
and $\rho=\left(\frac12,\dots,\frac12\right)$ restricted to the finite grid of points  $F^{\mathrm{VII},+}_N\equiv F_N^{\mathrm{V},+}\subset C_N$ pairwise discretely orthogonal with respect to the scalar product \eqref{scalV+}, i.e.,
\begin{equation}
\sum_{s\in F^{\mathrm{VII},+}_N}\ep_sH_s^{-1}\cos_{k+\rho}^+(s)\cos_{k'+\rho}^+(s)=\frac{H_k}{\wt d_k}\left(\frac{2N-1}{4}\right)^n\delta_{kk'}\,.
\end{equation}
Therefore, we expand any function $f: F_N^{\mathrm{VII},+}\map\R$ in terms of symmetric cosine functions as 
$$f(s)=\sum_{k\in D^+_N}^{N-1}A_k\cos^+_{k+\rho}(s),\quad A_k=\frac{\wt d_k}{H_k}\left(\frac{4}{2N-1}\right)^n\sum_{s\in F^{\mathrm{VII},+}_N}\ep_sH_s^{-1}f(s)\cos_{k+\rho}^+(s)\,.$$

\subsubsection{SMDCT VIII}\

For  $N\in \N$ are the symmetric cosine functions $\cos^+_{k+\rho}(s)$ , $k\in D^+_N$, 
and $\rho=\left(\frac12,\dots,\frac12\right)$ restricted to the finite grid of points 
\begin{equation}\label{gridVIII+}
F^{\mathrm{VIII},+}_N\equiv\set{\left(\frac{2\left(r_1+\frac12\right)}{2N+1},\dots,\frac{2\left(r_n+\frac12\right)}{2N+1}\right)}{(r_1,\dots,r_n)\in D^+_N}
\end{equation} 
pairwise discretely orthogonal, i.e.,  for any $k,k'\in D^+_N$ it holds that 
\begin{equation}\label{orthogVIII+}
\sum_{s\in F^{\mathrm{VIII},+}_N}H_s^{-1}\cos_{k+\rho}^+(s)\cos_{k'+\rho}^+(s)=H_k\left(\frac{2N+1}{4}\right)^n\delta_{kk'}\,.
\end{equation}
Therefore, we expand any function $f: F_N^{\mathrm{VIII},+}\map\R$ in terms of symmetric cosine functions as 
$$f(s)=\sum_{k\in D^+_N}A_k\cos^+_{k+\rho}(s),\quad A_k=\frac{1}{H_k}\left(\frac{4}{2N+1}\right)^n\sum_{s\in F^{\mathrm{VIII},+}_N}H_s^{-1}f(s)\cos_{k+\rho}^+(s)\,.$$

\subsection{Interpolations by symmetric discrete multivariate cosine functions}\

Suppose we have a real-valued function $f$ given on $F(\wt{S}^{\mathrm{aff}}_n)$. In Section \ref{SMDCT} we defined three finite grids in $F(\wt{S}^{\mathrm{aff}}_n)$, namely, $F^{\mathrm{VI},+}_N, F^{\mathrm{V},+}_N\equiv F^{\mathrm{VII},+}_N$, and $F^{\mathrm{VIII},+}_N$. We are interested in finding the interpolating polynomial of $f$ in the form of a finite sum of symmetric  multivariate cosine functions labeled by $k$ or by $k+\rho$, $k\in D^+_N$ in such a way that it coincides with $f$ on one of the grids in $F(\wt{S}^{\mathrm{aff}}_n)$. We distinguish between four types of interpolating polynomials defined for $x\in F(\wt{S}^{\mathrm{aff}}_n)$ and satisfying different conditions:
\begin{equation}\label{inter}
\begin{alignedat}{3}
&\psi^{\mathrm{V},+}_N(x)&=&\sum_{k\in D^+_N}B_k\cos_k^+(x)\,,&\qquad\qquad& \psi^{\mathrm{V},+}_N(s)=f(s)\,,\quad s\in F^{\mathrm{V},+}_N\,,\\
&\psi^{\mathrm{VI},+}_N(x)&=&\sum_{k\in D^+_N}B_k\cos_{k+\rho}^+(x)\,,&\qquad\qquad& \psi^{\mathrm{VI},+}_N(s)=f(s)\,,\quad s\in F^{\mathrm{VI},+}_N\,,\\
&\psi^{\mathrm{VII},+}_N(x)&=&\sum_{k\in D^+_N}B_k\cos_k^+(x)\,,&\qquad\qquad& \psi^{\mathrm{VII},+}_N(s)=f(s)\,,\quad s\in F^{\mathrm{VII},+}_N\,,\\
&\psi^{\mathrm{VIII},+}_N(x)&=&\sum_{k\in D^+_N}B_k\cos_{k+\rho}^+(x)\,,&\qquad\qquad& \psi^{\mathrm{VIII},+}_N(s)=f(s)\,,\quad s\in F^{\mathrm{VIII},+}_N\,.\\
\end{alignedat}
\end{equation} 
According to Section \ref{SMDCT}, the coefficients $B_k$ are chosen to be equal to corresponding $A_k$. In fact, it is not possible to have other values of the coefficients $B_k$ since it would contradict the fact that the antisymmetric cosine functions labeled by $k$ or by $k+\rho$ with $k\in D^+_N$ are basis vectors of the space of functions given on the corresponding grid.

\begin{example}
For $n=3$ is the smooth function \eqref{modfun} with values of parameters given by \eqref{vp}  chosen as a model function. We interpolate this function by polynomials of symmetric cosine functions $\psi^{\mathrm{V},+}_N(x,y,z)$ and $\psi^{\mathrm{VII},+}_N(x,y,z)$ with $N=5,10,15$.
Plots of this symmetric cosine interpolating polynomials are depicted in Figures \ref{fig3} and \ref{fig4}.

\begin{figure}[ht]
\includegraphics[width=2.5cm]{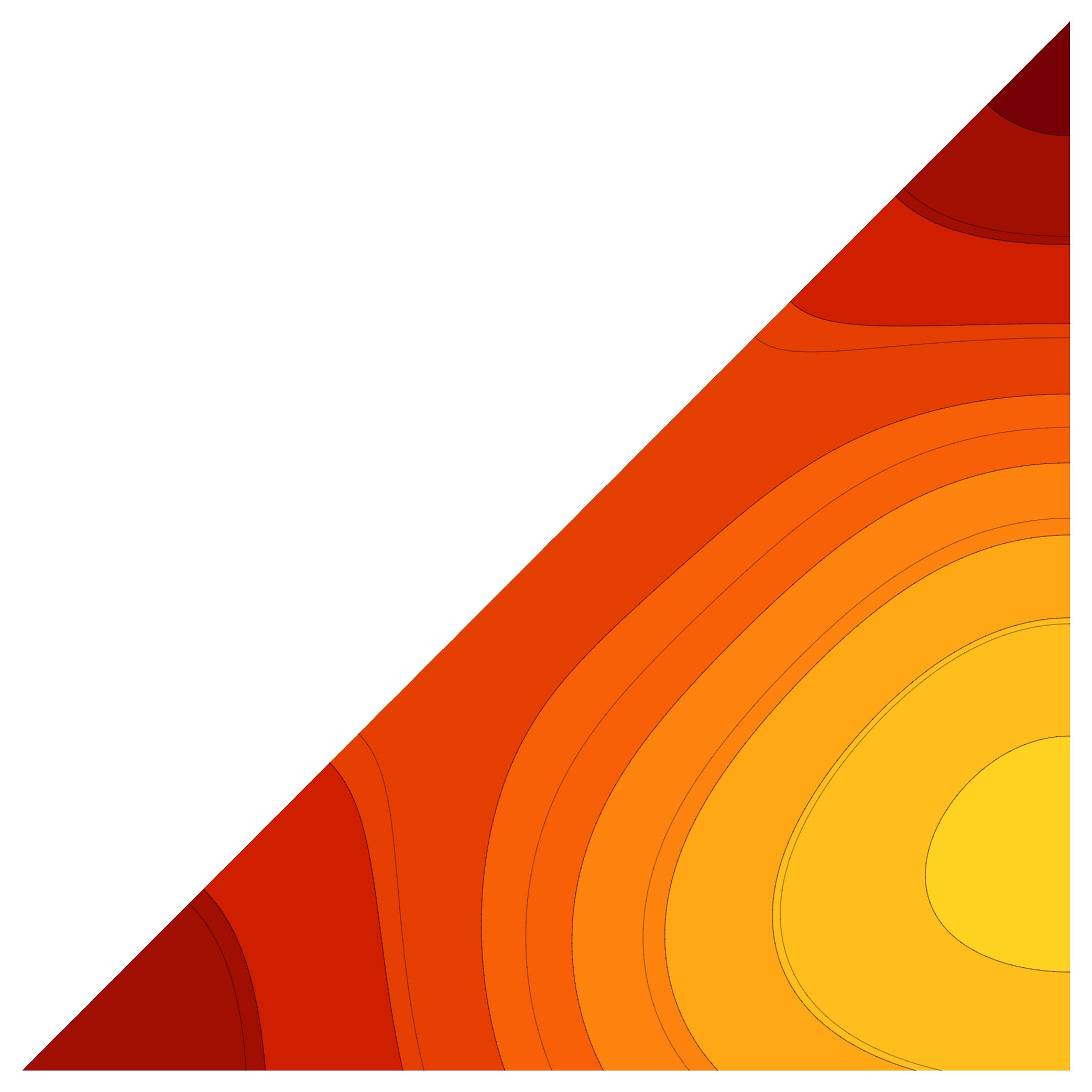}
\hspace{1.3cm}
\includegraphics[width=2.5cm]{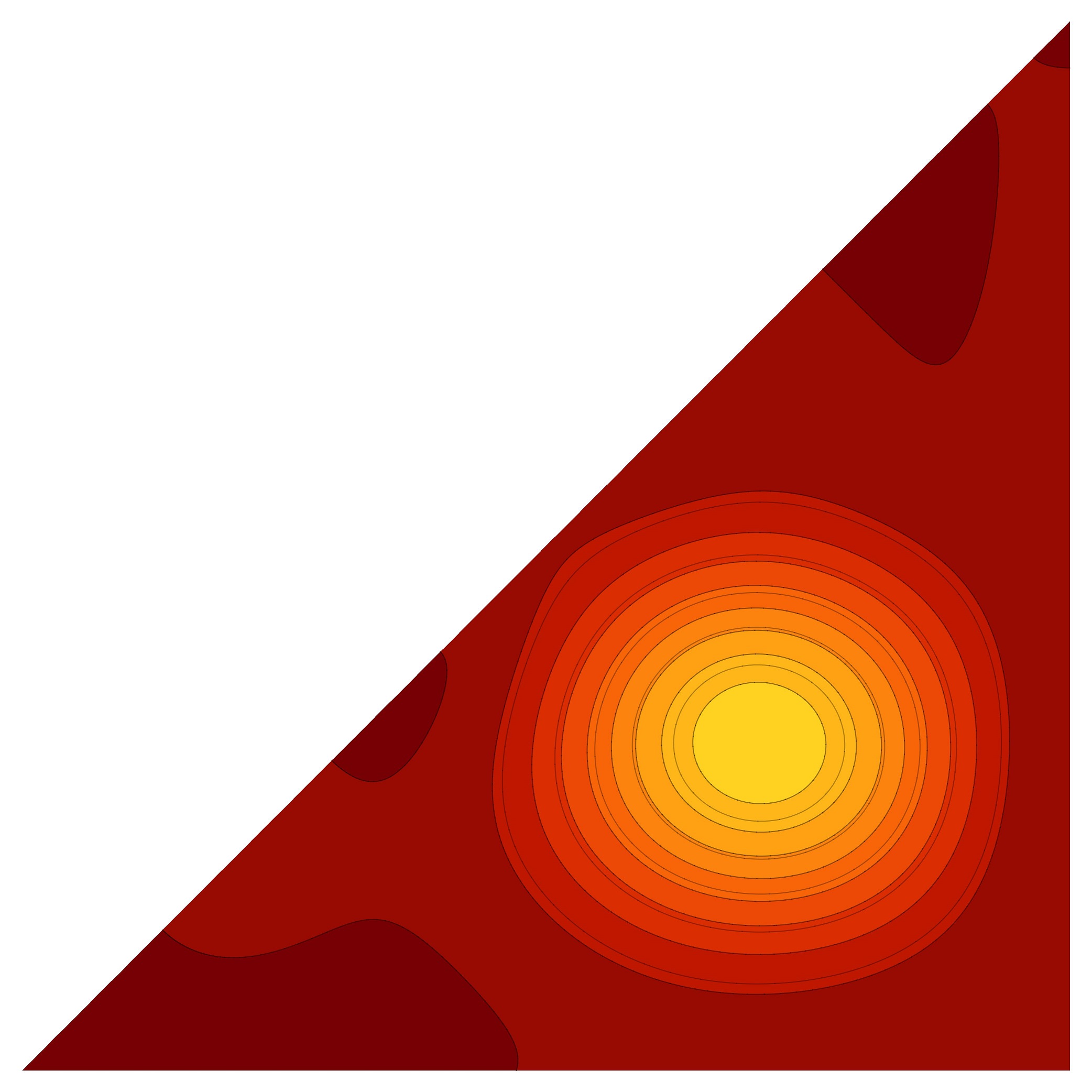}
\hspace{1.3cm}
\includegraphics[width=2.5cm]{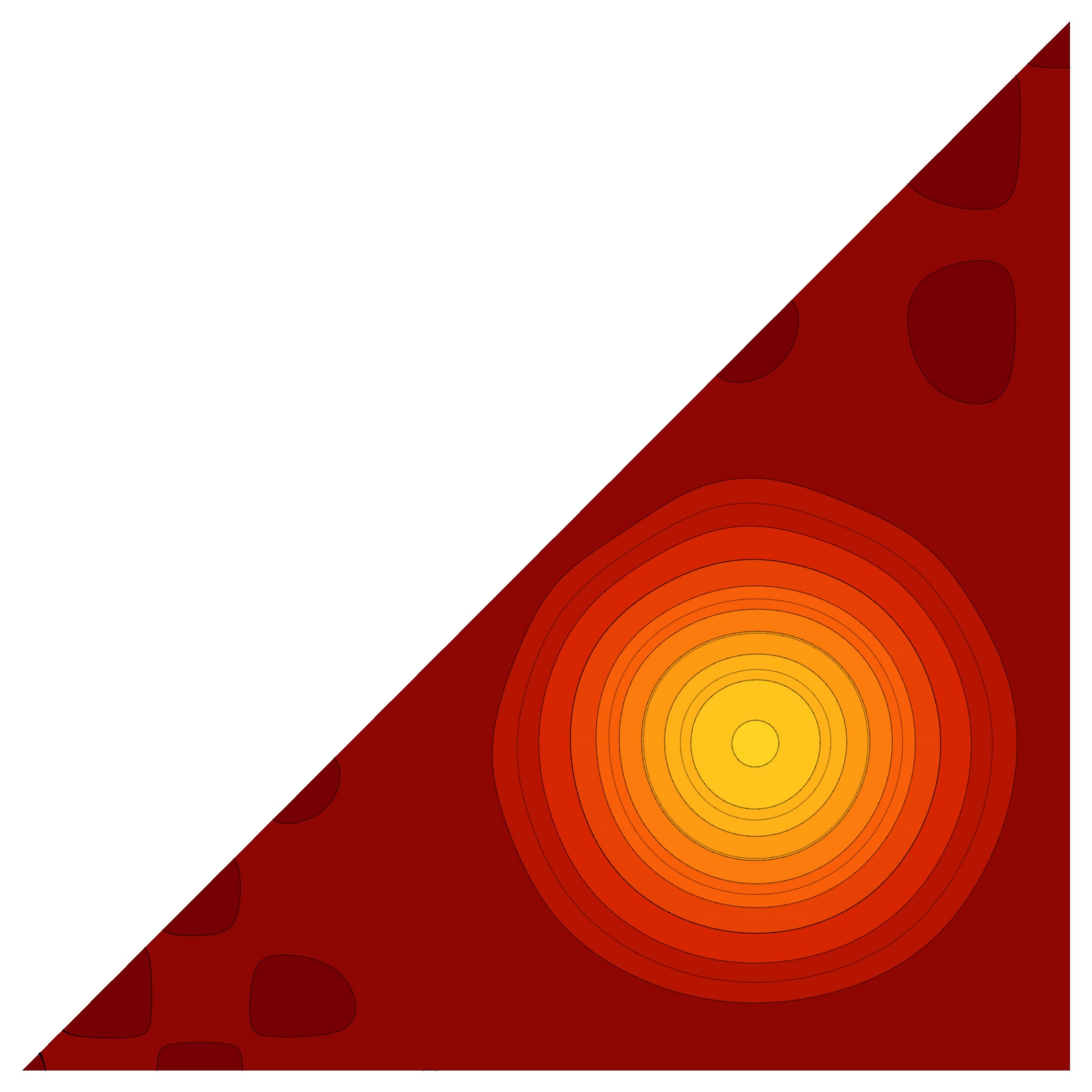}
\\
\includegraphics[width=4cm]{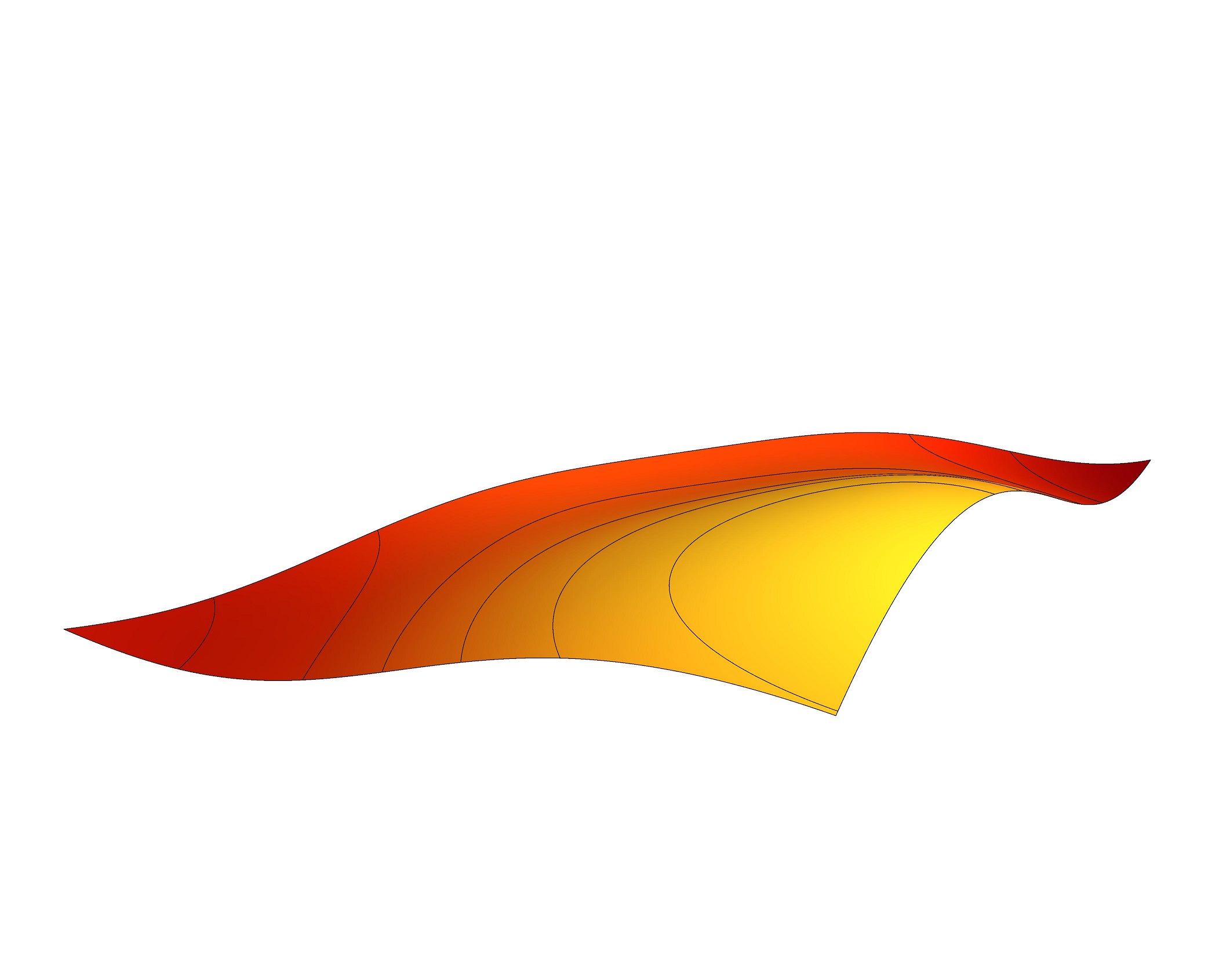}
\includegraphics[width=4cm]{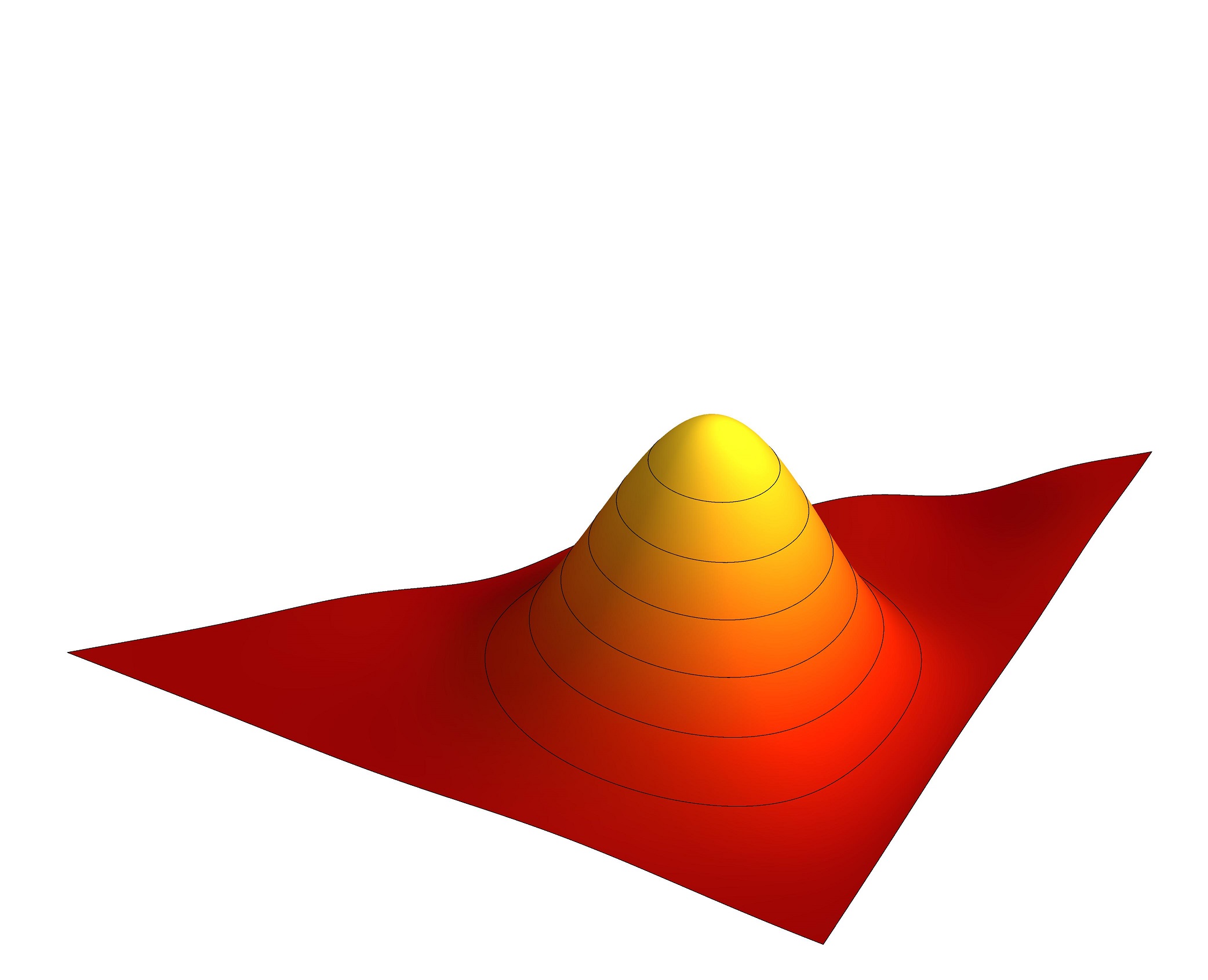}
\includegraphics[width=4cm]{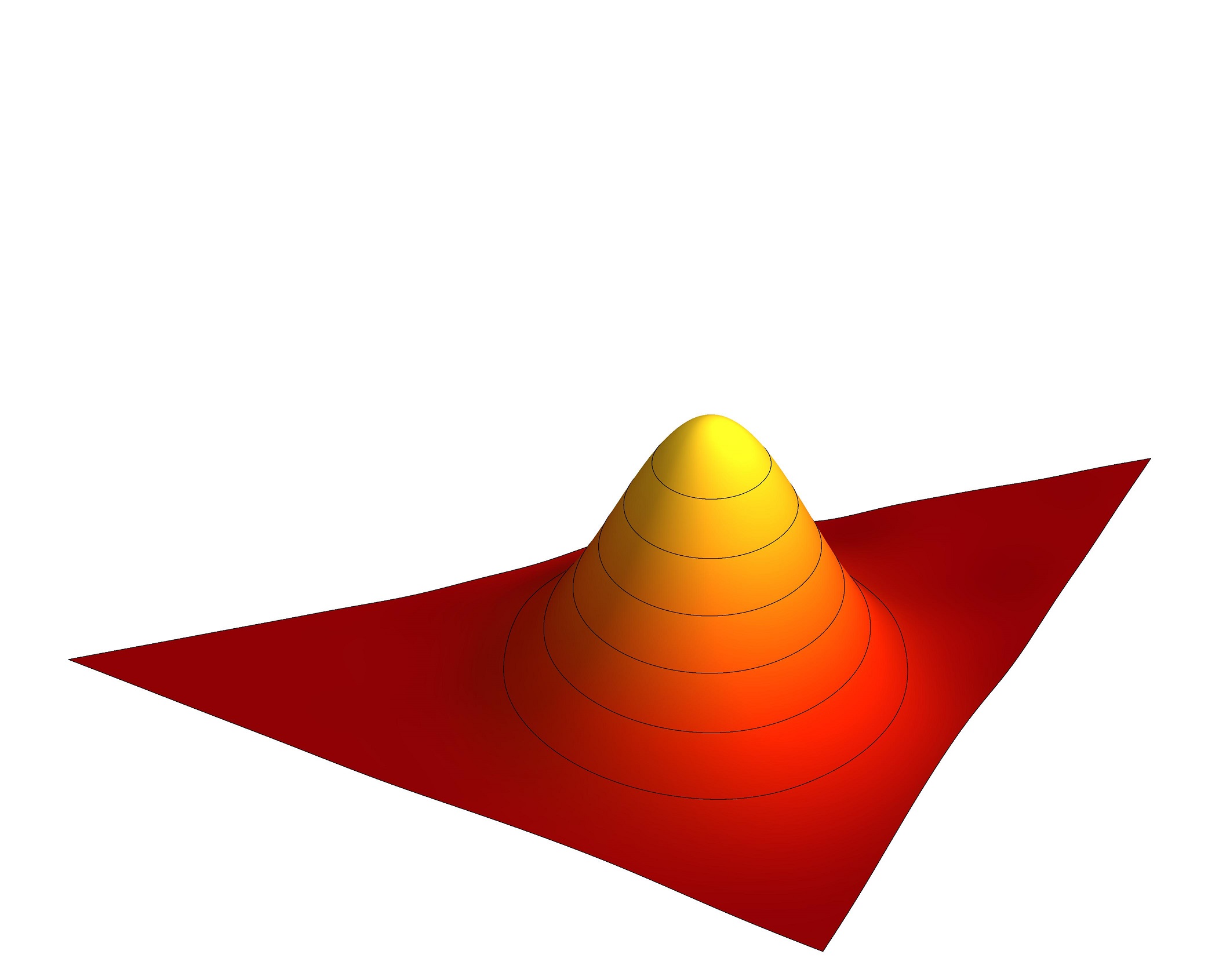}
\caption{The symmetric cosine interpolating polynomials $\psi^{\mathrm{V},+}_N(x,y,\frac{1}{3})$ of the model function \eqref{modfun} shown in Fig. \ref{model} with $N=5,10,15$.}
\label{fig3}
\end{figure}

\begin{figure}[ht]
\includegraphics[width=2.5cm]{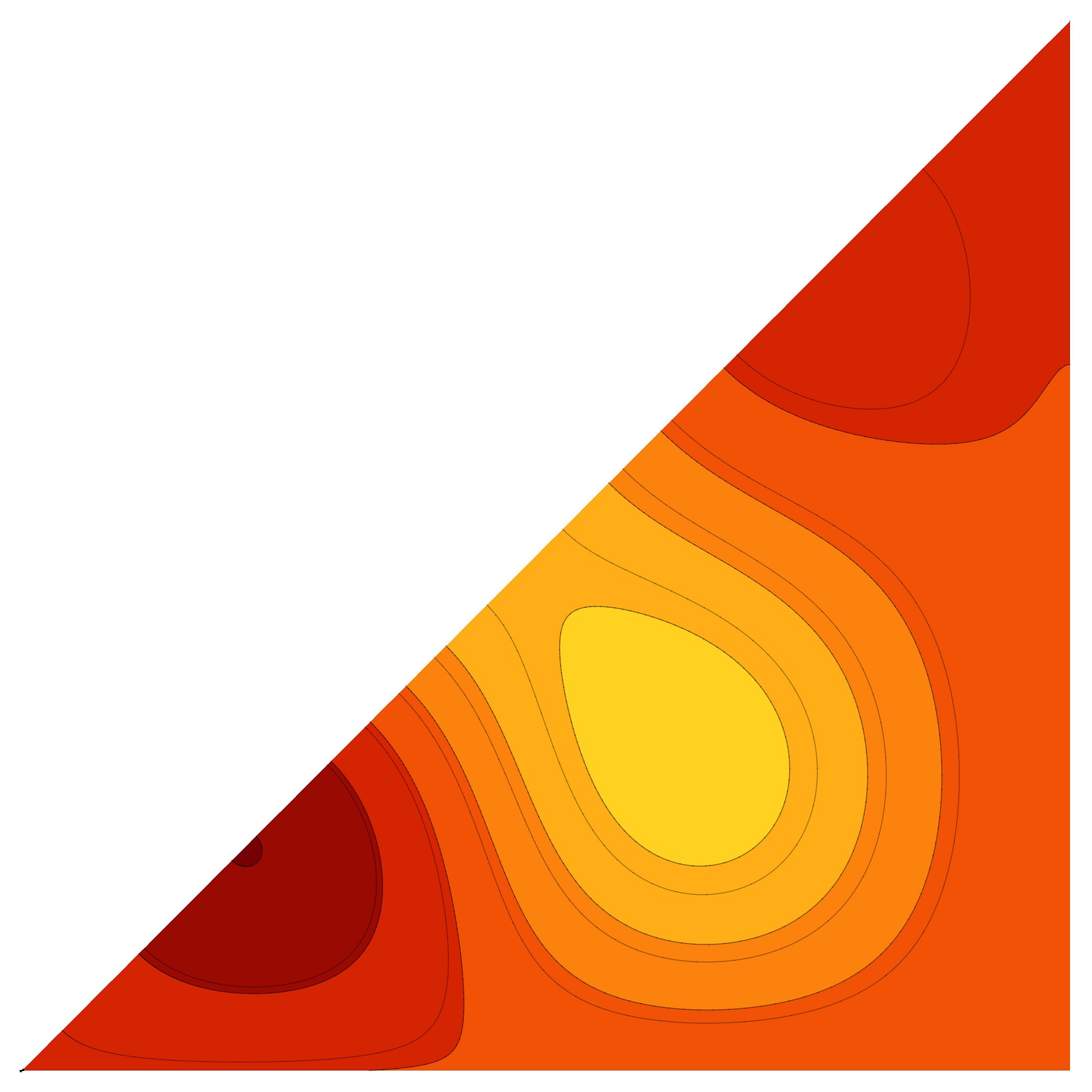}
\hspace{1.3cm}
\includegraphics[width=2.5cm]{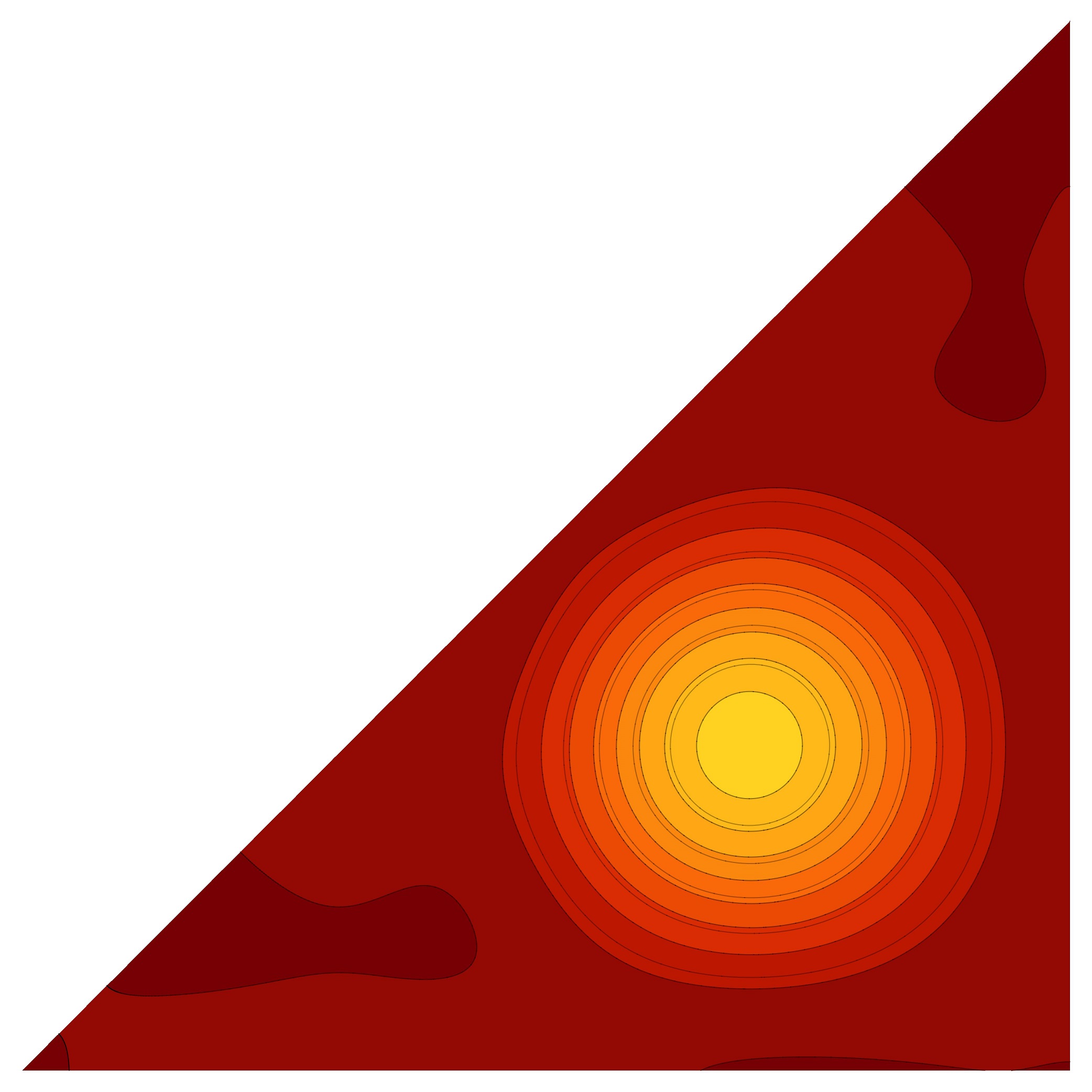}
\hspace{1.3cm}
\includegraphics[width=2.5cm]{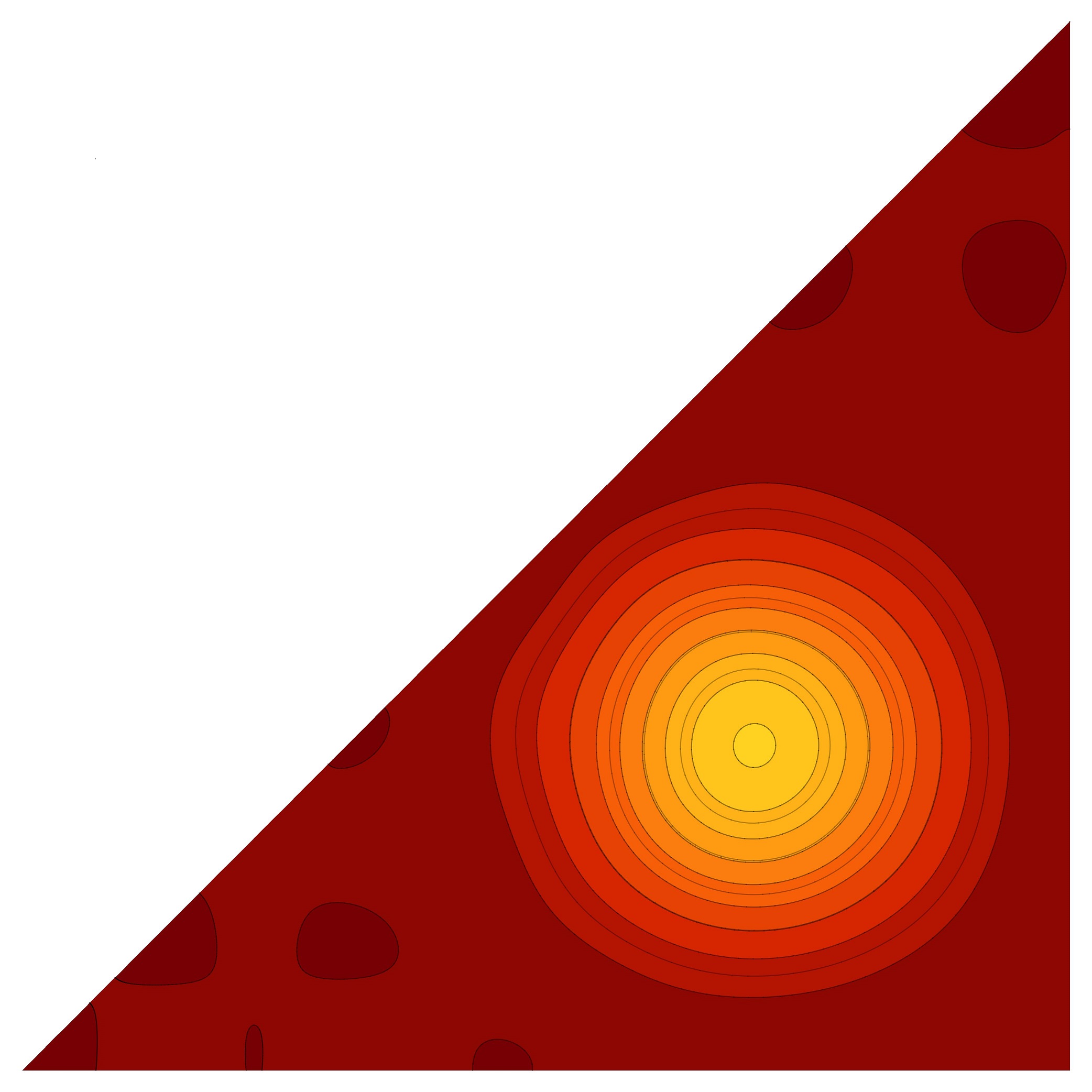}
\\
\includegraphics[width=4cm]{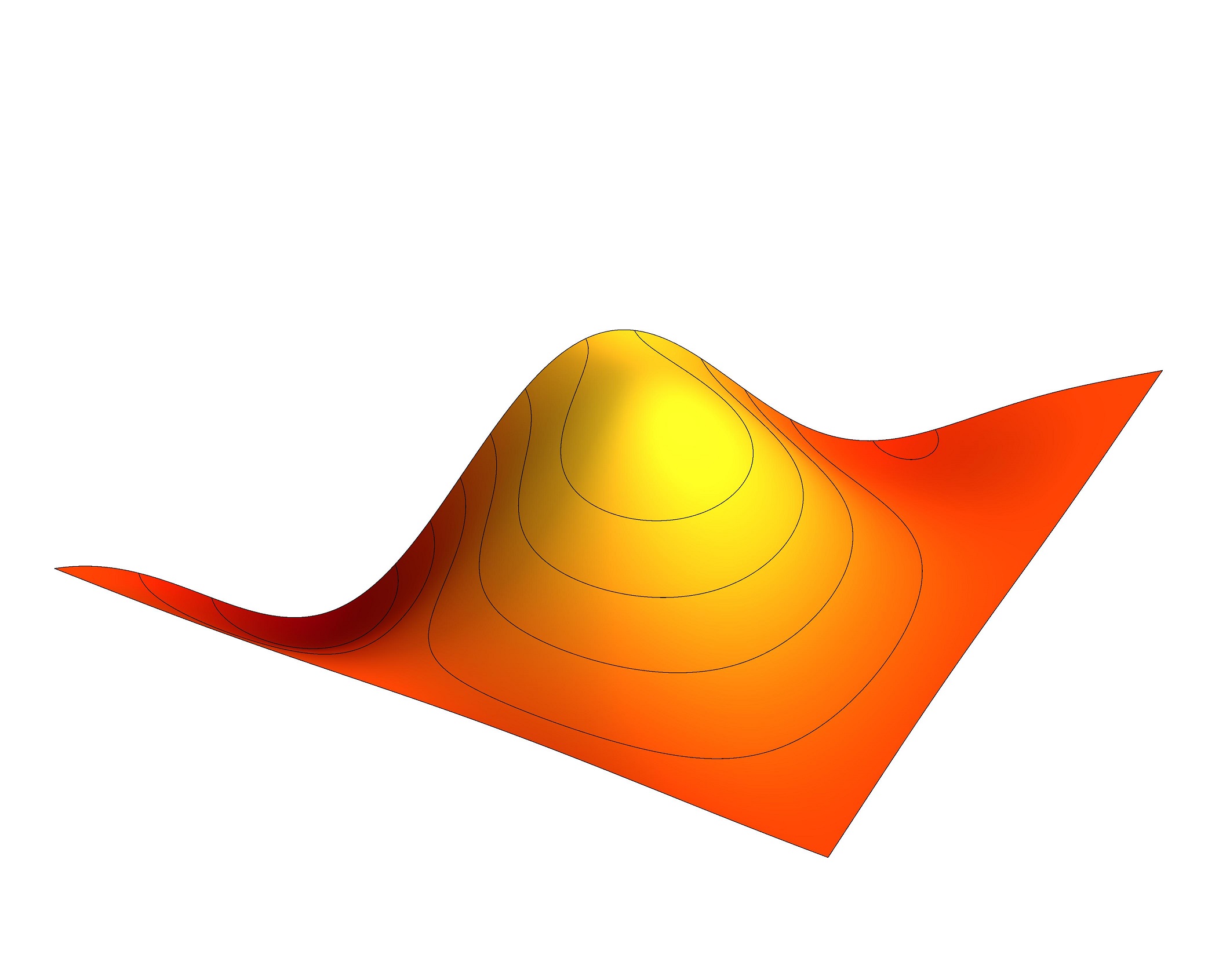}
\includegraphics[width=4cm]{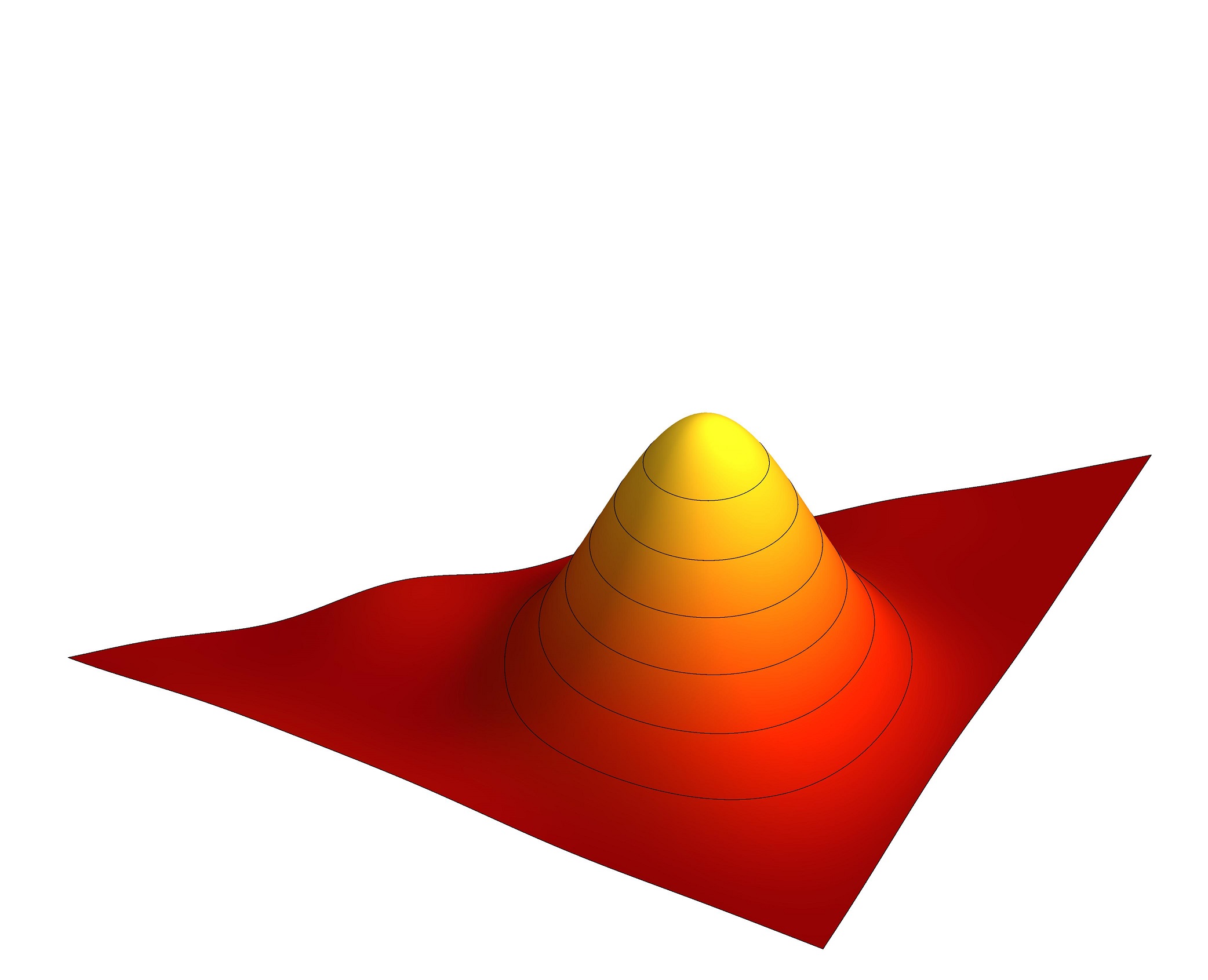}
\includegraphics[width=4cm]{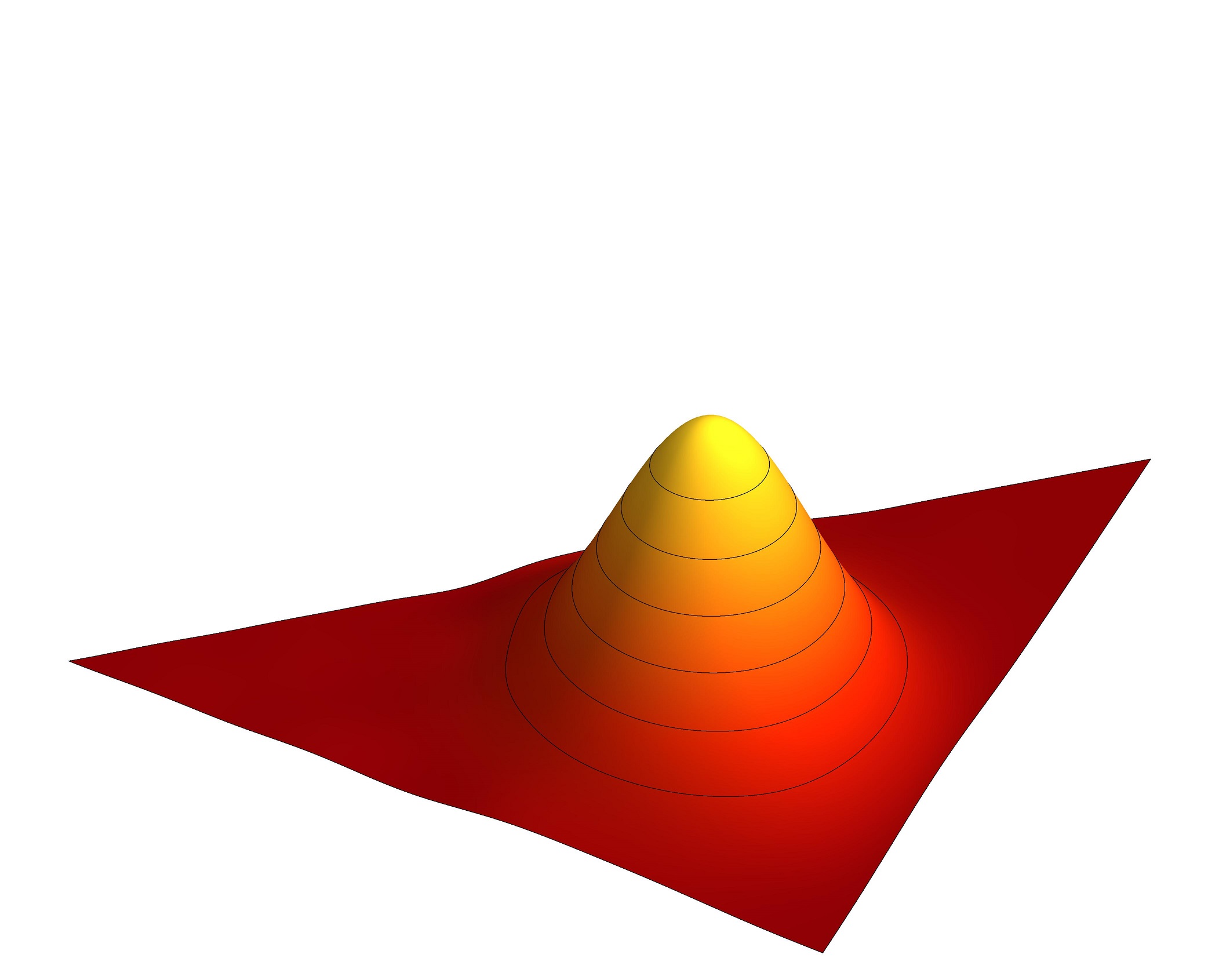}
\caption{The symmetric cosine interpolating polynomials $\psi^{\mathrm{VII},+}_N(x,y,\frac{1}{3})$ of the model function \eqref{modfun} shown in Fig. \ref{model} with $N=5,10,15$.}
\label{fig4}
\end{figure}

Integral error estimates of the  approximations of the model function \eqref{modfun} by the symmetric interpolating polynomials of types V and VII, 
$$\int_{F(\wt{S}_n^\mathrm{aff})}\abs{f(x,y,z)-\psi^{\mathrm{V},+}_N(x,y,z)}^2\,dx\,dy\,dz\,,\quad \int_{F(\wt{S}_n^\mathrm{aff})}\abs{f(x,y,z)-\psi^{\mathrm{VII},+}_N(x,y,z)}^2\,dx\,dy\,dz\,,$$
are shown in Table \ref{interror} for $N=5,10,15,20,25,30$.

\begin{table}
\begin{tabular}{|c||c|c|c|c|}
\hline 
$N$ & $\int_{F(\wt{S}_n^\mathrm{aff})}\abs{f-\psi^{\mathrm{V},-}_N}^2$ & $\int_{F(\wt{S}_n^\mathrm{aff})}\abs{f-\psi^{\mathrm{VII},-}_N}^2$ & $\int_{F(\wt{S}_n^\mathrm{aff})}\abs{f-\psi^{\mathrm{V},+}_N}^2$ & $\int_{F(\wt{S}_n^\mathrm{aff})}\abs{f-\psi^{\mathrm{VII},+}_N}^2$ \\ 
\hline \hline
5 & 0.648691 & 1.396870 & 0.725031 & 1.502161 \\ 
\hline 
10 & 0.007940 & 0.007599 & 0.007191 & 0.006471 \\ 
\hline 
15 & 0.001350 & 0.001407 & 0.000440 & 0.000492 \\ 
\hline 
20 & 0.001034 & 0.001058 & 0.000171 & 0.000195 \\ 
\hline 
25 & 0.000835 & 0.000847 & 0.000084 & 0.000097 \\ 
\hline 
30 & 0.000698 & 0.000705 & 0.000047 & 0.000054 \\ 
\hline 
\end{tabular}
\medskip
\caption{Integral error estimates of the polynomial approximations of the model function $f$ \eqref{modfun} for $N=5,10,15,20,25,30$.}
\label{interror}
\end{table} 
\end{example}

\section{Chebyshev-like multivariate orthogonal polynomials}\label{seccheb}\

Recall that the vectors $\rho,\rho_1$, and $\rho_2$ are defined by \eqref{rho}, \eqref{rho1}, and \eqref{rho2}, respectively. Introducing the $n$ functions $X_1,X_2,\dots,X_n$ given by 
\begin{equation}\label{var}
X_1=\cos^+_{(1,0,\dots,0)}\,,\quad X_2=\cos^+_{(1,1,0,\dots,0)}\,,\quad X_3=\cos^+_{(1,1,1,0,\dots,0)}\,,\quad \dots \,,\quad X_n=\cos^+_{(1,1,\dots,1)}\,,
\end{equation} 
we demonstrate that the following defining relations, valid for all points from the interior of  $F(\wt{S}^{\mathrm{aff}}_n)$, $x\in F(\wt{S}^{\mathrm{aff}}_n)^\circ $, $k\in P^+$,
\begin{equation}\label{newpol}
\begin{alignedat}{4}
&\mathcal{P}^{I,+}_k (X_1(x),\dots,X_n(x)) =\cos^+_k(x)\,,&\quad\quad&\mathcal{P}^{I,-}_k (X_1(x),\dots,X_n(x))=\frac{\cos^-_{k+\rho_1}(x)}{\cos^-_{\rho_1}(x)}\,,\\
&\mathcal{P}^{III,+}_k(X_1(x),\dots,X_n(x))=\frac{\cos^+_{k+\rho}(x)}{\cos^+_{\rho}(x)}\,,&\quad\quad&\mathcal{P}^{III,-}_k(X_1(x),\dots,X_n(x))=\frac{\cos^-_{k+\rho_2}(x)}{\cos^-_{\rho_2}(x)}\,,
\end{alignedat}
\end{equation}
determine four classes of orthogonal polynomials $\mathcal{P}^{I,+}_k, \mathcal{P}^{I,-}_k, \mathcal{P}^{III,-}_k,\mathcal{P}^{III,-}_k \in \R[X_1,\dots,X_n] $ of degree $k_1$.  
Since from Corollary \ref{nonzero} it follows that the three functions in the denominators of \eqref{newpol} have nonzero values inside $F(\wt{S}^{\mathrm{aff}}_n)$, the functions \eqref{newpol} are well defined for any point in the interior of $F(\wt{S}^{\mathrm{aff}}_n)$. Moreover, whenever one of the denominators is zero at some points of the boundary of $F(\wt{S}^{\mathrm{aff}}_n)$ the corresponding nominator is zero as well. 
We introduce an ordering within each family of polynomials \eqref{newpol}. We say that a polynomial $p_1$  depending on $k=(k_1,\dots,k_n)\in P^+$ is greater than any polynomial $p_2$  depending on $k'=(k_1',\dots,k'_n)\neq k$ if  for all $i\in \{1,\dots,n\} $ it holds that $k_i\geq k_i'$; equally, we state that $p_2$ is lower than $p_1$. 
Note that the functions \eqref{newpol} can be viewed as generalizations of Chebyshev polynomials of the first and third kinds \cite{Hand}.

\subsection{Recurrence relations}\label{secrec}\

The construction of polynomials is based on the decomposition of products of symmetric and antisymmetric cosine functions. There are three types of products which decompose to a sum of either symmetric or antisymmetric cosine functions. Such a decomposition is obtained by using classical trigonometric identities and is completely described by 
\begin{equation}\label{prod}
\begin{aligned}
\cos^+_{(\lambda_1,\dots,\lambda_n)}\cdot\cos^+_{(\mu_1,\dots,\mu_n)}&=\frac{1}{2^n}\sum_{\sigma\in S_n}\sum_{\setcomb{a_i=\pm1}{i=1,\dots,n}}\cos^+_{(\la_1+a_1\mu_{\sigma(1)},\dots, \la_n+a_n\mu_{\sigma(n)})}\,,\\
\cos^-_{(\lambda_1,\dots,\lambda_n)}\cdot\cos^+_{(\mu_1,\dots,\mu_n)}&=\frac{1}{2^n}\sum_{\sigma\in S_n}\sum_{\setcomb{a_i=\pm1}{i=1,\dots,n}}\cos^-_{(\la_1+a_1\mu_{\sigma(1)},\dots, \la_n+a_n\mu_{\sigma(n)})}\,,\\
\cos^-_{(\lambda_1,\dots,\lambda_n)}\cdot\cos^-_{(\mu_1,\dots,\mu_n)}&=\frac{1}{2^n}\sum_{\sigma\in S_n}\sum_{\setcomb{a_i=\pm1}{i=1,\dots,n}}\mathrm{sgn}\,(\sigma)\cos^+_{(\la_1+a_1\mu_{\sigma(1)},\dots, \la_n+a_n\mu_{\sigma(n)})}\,.
\end{aligned}
\end{equation}

Using \eqref{prod}, we obtain the following pertinent recurrence relations. Let $\la=(\la_1,\dots,\la_n)$, and let $l_i$ be a vector in $\R^n$ with $i$th component equal to $1$ and others to $0$, then 
\begin{align}
\cos^{\pm}_\la&=\frac{2^1}{1!(n-1)!}\cos^{\pm}_{\la-l_1}X_1-\cos^{\pm}_{\la-2l_1}-\sum_{i=2}^n\left(\cos^\pm_{\la-l_1+l_i}+\cos^\pm_{\la-l_1-l_i}\right)\,,\notag\\
\cos^{\pm}_\la&=\frac{2^2}{2!(n-2)!}\cos^{\pm}_{\la-l_1-l_2}X_2-\cos^{\pm}_{\la-2l_1-2l_2}-\cos^{\pm}_{\la-2l_1}- \cos^{\pm}_{\la-2l_2}\notag\\&
-\sum_{i=3}^n\left(\cos^{\pm}_{\la-l_2+l_i}+\cos^\pm_{\la-2l_1-l_2-l_i}+\cos^{\pm}_{\la-l_2-l_i}+\cos^\pm_{\la-2l_1-l_2+l_i}\right)\notag\\&-\sum_{\setcomb{i,j=2}{i< j}}^n\left(\cos^\pm_{\la-l_1-l_2+l_i+l_j}+\cos^\pm_{\la-l_1-l_2-l_i-l_j}+\cos^{\pm}_{\la-l_1-l_2+l_i-l_j}+\cos^{\pm}_{\la-l_1-l_2-l_i+l_j}\right)\,,\notag\\
&\vdots\notag\\
\cos^{\pm}_\la&=\frac{2^n}{n!}\cos^\pm_{\la-l_1-l_2-\dots-l_n}X_n-\sum_{i=1}^n\cos^\pm_{\la-2l_i}-\sum_{\setcomb{i,j=1}{i<j}}^n\cos^\pm_{\la-2l_i-2l_j}-\dots-\cos^\pm_{\la-2l_1-2l_2-\dots-2l_n}\,.\label{perrel}
\end{align}
Using the relations \eqref{perrel} and the symmetry properties of $\cos^\pm_\lambda$ each polynomial of \eqref{newpol} is expressed as a linear combination of lower polynomials and a product of some lower polynomial with some $X_i$. Therefore, all polynomials \eqref{newpol} are built recursively.
\begin{tvr}\label{proppol}
Let $k\in P^+$. The functions $\mathcal{P}^{I,\pm}_k$ and $\mathcal{P}^{III,\pm}_k$  are expressed as polynomials of degree $k_1$ in variables $X_1,\dots, X_n$. The number of $\mathcal{P}^{I,\pm}_k$ or $\mathcal{P}^{III,\pm}_k$ with $k_1=d$ is equal to the number of monomials of degree $d$, i.e.,
\begin{equation}
\binom{d+n-1}{ d }\,.
\end{equation}
\end{tvr}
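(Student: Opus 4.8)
The plan is to establish the statement in two parts: first that each $\mathcal{P}^{I,\pm}_k$ and $\mathcal{P}^{III,\pm}_k$ is a genuine polynomial of degree $k_1$ in $X_1,\dots,X_n$, and second that the count of such polynomials with $k_1 = d$ matches $\binom{d+n-1}{d}$.

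For the first part I would proceed by induction with respect to the ordering on $P^+$ introduced just before the proposition, using the recurrence relations \eqref{perrel}. Fix $k\in P^+$ and consider the relevant numerator, say $\cos^+_k$ (the cases $\cos^-_{k+\rho_1}$, $\cos^+_{k+\rho}$, $\cos^-_{k+\rho_2}$ are handled by the same argument after dividing through by the appropriate fixed denominator $\cos^-_{\rho_1}$, $\cos^+_\rho$, $\cos^-_{\rho_2}$, which is legitimate on the interior of $F(\wt{S}^{\mathrm{aff}}_n)$ by Corollary \ref{nonzero}). Apply the $j$th relation in \eqref{perrel} with $\la = k$, choosing $j$ to be the largest index for which $k_j \neq 0$ — this guarantees that the distinguished term $\cos^+_{k - l_1 - \dots - l_j}X_j$ makes sense (all shifted labels remain in $\Z^n$, and after reordering by \eqref{perms} land in $P^+$), and that $k - l_1 - \dots - l_j$ is strictly lower than $k$ in the ordering. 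Every remaining term on the right-hand side is a $\cos^+_\mu$ with $\mu$ obtained from $k$ by subtracting a nonzero combination of the $l_i$ and possibly adding some $l_i$; after symmetrization via \eqref{perms} each such $\mu$ reorders to an element of $P^+$ that is strictly lower than $k$. By the induction hypothesis all these lower terms are polynomials in $X_1,\dots,X_n$ of degree equal to their leading component, which is at most $k_1$, and the term $\mathcal{P}_{k-l_1-\dots-l_j}X_j$ contributes degree (at most) $k_1 - 1 + 1 = k_1$; one then checks the leading component is exactly $k_1$ so the degree is precisely $k_1$ rather than lower — this is where one must verify no unexpected cancellation of the top-degree part occurs, which follows because the distinguished term is the unique contribution whose first label component equals $k_1$ and $X_j$ has degree exactly one. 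The base case $k = 0$ gives the constant polynomial $\mathcal{P}_0 = 1$ (or $n!$, a nonzero constant, in the $\mathcal{P}^{I,+}$ normalization), of degree $0$.

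For the counting part, observe that the polynomials in each family are indexed bijectively by $k\in P^+$, and grouping by the value $d = k_1$ of the leading component, one must count $\#\{k\in P^+ : k_1 = d\} = \#\{(k_2,\dots,k_n)\in\Z^{n-1} : d \geq k_2\geq \dots\geq k_n\geq 0\}$. This is the number of partitions with at most $n-1$ parts each bounded by $d$, equivalently the number of monomials of degree exactly $d$ in $n$ variables (a standard stars-and-bars identity), namely $\binom{d+n-1}{d}$. Alternatively — and this is the cleaner route to phrase in the paper — one notes that since each $\mathcal{P}_k$ has degree $k_1$ and these polynomials will be shown (or are claimed) to be linearly independent, the number with $k_1 \leq d$ is $\sum_{j=0}^d \binom{j+n-1}{j} = \binom{d+n}{d}$, matching $\dim\R[X_1,\dots,X_n]_{\leq d}$, so the number with $k_1$ exactly $d$ is the difference $\binom{d+n}{d} - \binom{d+n-1}{d-1} = \binom{d+n-1}{d}$.

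The main obstacle is the degree bookkeeping in the inductive step: one must confirm that applying \eqref{perrel} never produces a term whose associated element of $P^+$ (after reordering) has first component exceeding $k_1$, and that the top-degree monomial genuinely survives. The potential danger is a shift like $k - l_1 + l_i$ with $i$ such that $k_i = k_1$, which could in principle push a component above $k_1$ after reordering — but because we choose the recurrence index $j$ minimally adapted to the support of $k$ and because all "$+l_i$" terms in \eqref{perrel} are paired with compensating "$-l_1$" or "$-l_2$" shifts, each resulting label stays componentwise dominated appropriately; making this precise is the technical heart of the argument. Everything else is routine once the ordering is set up correctly.
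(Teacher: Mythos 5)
Your overall route coincides with the paper's: polynomiality and the degree statement are obtained by induction through the recurrences \eqref{perrel}, and the count via the bijection between $\set{(k_2,\dots,k_n)\in\Z^{n-1}}{d\geq k_2\geq\dots\geq k_n\geq0}$ and the monomials of degree $d$; the counting half of your argument is correct as written. The genuine gap is in how you organize the induction. You induct over the componentwise partial order on $P^+$ and therefore need every non-distinguished term on the right-hand side of \eqref{perrel} to reorder to a label strictly lower than $k$. This fails: for $j<n$ the $j$th relation contains terms such as $\cos^\pm_{\lambda-l_m+l_i}$ whose label has the \emph{same component sum} as $\lambda$, and a label with equal component sum can be componentwise $\leq\lambda$ only if it equals $\lambda$; hence such a term is never ``lower'' in your order unless it is absorbed into the left-hand side. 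Concretely, for $n=3$ and $k=(3,2,0)$ your prescription gives $j=2$, and the second relation produces $\cos^+_{(3,1,1)}$ with $(3,1,1)\in P^+$ not below $(3,2,0)$, so the induction hypothesis cannot be invoked. Your closing assertion that the $+l_i$ shifts are compensated so that ``each resulting label stays componentwise dominated'' is exactly the claim that is false, so the induction does not close as set up.

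The paper avoids this by inducting on the single integer $k_1$, with a secondary sweep through the labels inside each level $k_1=d$ (generating $\mathcal{P}_{(d,k_2,\dots,k_n)}$ from products $X_i\cdot\mathcal{P}_{(d-1,\dots)}$ starting from the lowest), and by taking the recurrence index $j$ equal to the multiplicity of $k_1$ in $k$ rather than the largest index with $k_j\neq0$: with that choice every term on the right either has first component at most $k_1-1$, or reorders to $k$ itself and is moved to the left-hand side (this is the origin of the coefficients $\frac12$, $\frac13$ in the displayed three-dimensional recurrences). Note also that under the correct scheme several degree-$k_1$ products $\mathcal{P}_{(k_1-1,\dots)}X_i$ can appear on the right, so the non-cancellation of the top-degree part is not settled by uniqueness of the distinguished term as you claim; it is handled by the level-by-level construction (each new polynomial is produced, not cancelled, by exactly one product taken in order). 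If you replace your partial-order induction by induction on $k_1$ with this secondary ordering, the remainder of your argument goes through and matches the paper's proof.
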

\begin{proof}
At first, we proceed by induction on $k_1$ to show  that any $\mathcal{P}^{I,+}_k$ is expressed as a polynomial of degree $k_1$ in the variables \eqref{var}. 
\begin{itemize}
\item If $k_1=0$, $\mathcal{P}^{I,+}_{(0,\dots,0)}$ is trivially the constant polynomial $n!$ of degree $0$. 
\item If $k_1=1$, the polynomials $X_1,X_2,\dots,X_n$ of degree $1$ correspond by definition to the set of functions $\mathcal{P}^{I,+}_{(1,k_2,\dots,k_n)}$, where $1\geq k_2\geq\dots\geq k_n\geq0$. 
\item If $k_1=2$, using relations \eqref{perrel} and the basic properties of symmetric cosine functions, we deduce that any $\mathcal{P}^{I,+}_{(2,k_2,\dots,k_n)}$ is constructed from the decomposition of the products $X_iX_j$. Indeed, we start by obtaining the lowest polynomial $\mathcal{P}^{I,+}_{(2,0,\dots,0)}$ from the product $X_1^2$ which decomposes into the linear combination of $\mathcal{P}^{I,+}_{(2,0,\dots,0)}$ and polynomials of degree $1$. We continue by the decomposition of the product $X_1X_2$ to have the polynomial expression of $\mathcal{P}^{I,+}_{(2,1,\dots,0)}$  and so on. Finally, we obtain that each $\mathcal{P}^{I,+}_{(2,k_2,\dots,k_n)}$ is expressed as a polynomial of degree $2$ in variables \eqref{var}.
\item We next suppose that every $\mathcal{P}^{I,+}_{(d-1,k_2,\dots,k_n)}$ is a polynomial of degree $d-1$ in variables \eqref{var}, then again using relations \eqref{perrel} the polynomial  $\mathcal{P}^{I,+}_{(d,k_2,\dots,k_n)}$ is a linear combination of lower polynomials of degree at most $d$ and $\mathcal{P}^{I,+}_{(d-1,\wt{k}_2,\dots,\wt{k}_n)}X_i$ of degree $d$. 
\end{itemize}
By induction, this results in the fact that any $\mathcal{P}^{I,+}_k$ is expressed as a polynomial of degree $k_1$ in the variables \eqref{var}. 
By similar arguments, we obtain the same statement for the functions $\mathcal{P}^{I,-}_k$ and $\mathcal{P}^{III,\pm}_k$. Note that we start with the products of $\cos^-_{\rho_1}$, $\cos^+_{\rho}$, $\cos^-_{\rho_2}$, respectively, with the variables $X_i$.

To prove the second statement, we observe that the number of polynomials of type $\mathcal{P}^{I,\pm}_k$ or $\mathcal{P}^{III,\pm}_k$ with $k_1=d$ is equal to the number of elements in
$\set{(k_2,k_3,\dots,k_n)\in\Z^{n-1}}{d\geq k_2\geq k_3\geq\dots\geq k_n\geq0}$ which is the same as
$$\#\set{(l_n+l_{n-1}+\dots +l_2,\dots, l_n+l_{n-1},l_n)}{l_i,l_0\in\Z^{\geq0},l_0+\sum_{i=2}^nl_i= d}$$
or equivalently$$\#\set{X_1^{\alpha_1}X_2^{\alpha_2}\dots X_n^{\alpha_n}}{\alpha_i\geq0,\sum_{i=1}^n\alpha_i=d}=\comb{d+n-1}{ d}\,.$$
The proof of the last equality is found in \cite{Xuort}.
\end{proof}

\begin{example}
In particular, the relations \eqref{perrel} together with the symmetry properties of symmetric cosine functions imply the following set of recurrence relations for $P^{I,+}_{(k_1,k_2,k_3)}$:
\begin{equation}
\mathcal{P}^{I,+}_{(0,0,0)}=6\,,\quad\mathcal{P}^{I,+}_{(1,0,0)}=X_1\,,\quad \mathcal{P}^{I,+}_{(1,1,0)}=X_2\,,\quad
\mathcal{P}^{I,+}_{(1,1,1)}=X_3\,,
\end{equation}
\begin{alignat}{3}
&k_1\geq2,\,k_2=k_3=0:&\quad& \mathcal{P}^{I,+}_{(k_1,0,0)}=\mathcal{P}^{I,+}_{(k_1-1,0,0)}X_1-\mathcal{P}^{I,+}_{(k_1-2,0,0)}-4\mathcal{P}^{I,+}_{(k_1-1,1,0)}\,,\notag\\
&k_1-1 > k_2>k_3=0      :&\quad &\mathcal{P}^{I,+}_{(k_1,k_2,0)}=\mathcal{P}^{I,+}_{(k_1-1,k_2,0)}X_1 -\mathcal{P}^{I,+}_{(k_1-2,k_2,0)}
   -2\mathcal{P}^{I,+}_{(k_1-1,k_2,1)} -\mathcal{P}^{I,+}_{(k_1-1,k_2+1,0)}\notag\\ && &\qquad -\mathcal{P}^{I,+}_{(k_1-1,k_2-1,0)}\,,\notag\\
&k_1-1> k_2=k_3>0:&\quad& \mathcal{P}^{I,+}_{(k_1,k_2,k_2)}=\mathcal{P}^{I,+}_{(k_1-1,k_2,k_2)}X_1-\mathcal{P}^{I,+}_{(k_1-2,k_2,k_2)}-2\mathcal{P}^{I,+}_{(k_1-1,k_2+1,k_2)}
-2\mathcal{P}^{I,+}_{(k_1-1,k_2,k_2-1)}\,,\notag\\
&k_1-1> k_2>k_3>0:&\quad &\mathcal{P}^{I,+}_{(k_1,k_2,k_3)}=\mathcal{P}^{I,+}_{(k_1-1,k_2,k_3)}X_1-\mathcal{P}^{I,+}_{(k_1-2,k_2,k_3)}-\mathcal{P}^{I,+}_{(k_1-1,k_2,k_3+1)}
-\mathcal{P}^{I,+}_{(k_1-1,k_2,k_3-1)}\notag\\&&&\qquad-\mathcal{P}^{I,+}_{(k_1-1,k_2+1,k_3)}-\mathcal{P}^{I,+}_{(k_1-1,k_2-1,k_3)}\,,\notag\\
&k_1-1=k_2>k_3=0      :&\quad& \mathcal{P}^{I,+}_{(k_1,k_1-1,0)}=\frac12\mathcal{P}^{I,+}_{(k_1-1,k_1-1,0)}X_1-\mathcal{P}^{I,+}_{(k_1-1,k_1-2,0)}
    -\mathcal{P}^{I,+}_{(k_1-1,k_1-1,1)}\,,\notag\\
 &k_1-1=k_2>k_3>0:&\quad &\mathcal{P}^{I,+}_{(k_1,k_1-1,k_3)}=\frac12\mathcal{P}^{I,+}_{(k_1-1,k_1-1,k_3)}X_1-\mathcal{P}^{I,+}_{(k_1-1,k_1-2,k_3)}-\frac12\mathcal{P}^{I,+}_{(k_1-1,k_1-1,k_3+1)}
\notag\\&&&\qquad-\frac12\mathcal{P}^{I,+}_{(k_1-1,k_1-1,k_3-1)}\,,\notag\\
&k_1-1=k_2=k_3>0:&\quad& \mathcal{P}^{I,+}_{(k_1,k_1-1,k_1-1)}=\frac13\mathcal{P}^{I,+}_{(k_1-1,k_1-1,k_1-1)}X_1-\mathcal{P}^{I,+}_{(k_1-1,k_1-1,k_1-2)}\,,\notag\\
&k_1=k_2>2,k_3=0 :&\quad &\mathcal{P}^{I,+}_{(k_1,k_1,0)}=2\mathcal{P}^{I,+}_{(k_1-1,k_1-1,0)}X_2-2\mathcal{P}^{I,+}_{(k_1-1,k_1-2,0)}X_1-2\mathcal{P}^{I,+}_{(k_1-1,k_1-1,1)}X_1
        \notag\\&&&\qquad+\mathcal{P}^{I,+}_{(k_1-2,k_1-2,0)}  +4\mathcal{P}^{I,+}_{(k_1-1,k_1-1,0)}+4\mathcal{P}^{I,+}_{(k_1-1,k_1-2,1)}+2\mathcal{P}^{I,+}_{(k_1-1,k_1-3,0)}\notag\\ && &\qquad+2\mathcal{P}^{I,+}_{(k_1-1,k_1-1,2)}\,,\notag\\
&k_1=k_2>k_3+2>2:&\quad& \mathcal{P}^{I,+}_{(k_1,k_1,k_3)}=2\mathcal{P}^{I,+}_{(k_1-1,k_1-1,k_3)}X_2-2\mathcal{P}^{I,+}_{(k_1-1,k_1-2,k_3)}X_1-\mathcal{P}^{I,+}_{(k_1-1,k_1-1,k_3+1)}X_1 \notag\\&&&\qquad-\mathcal{P}^{I,+}_{(k_1-1,k_1-1,k_3-1)}X_1
       +\mathcal{P}^{I,+}_{(k_1-2,k_1-2,k_3)}+2\mathcal{P}^{I,+}_{(k_1-1,k_1-2,k_3+1)}\notag\\&&&\qquad+2\mathcal{P}^{I,+}_{(k_1-1,k_1-2,k_3-1)}
       +4\mathcal{P}^{I,+}_{(k_1-1,k_1-1,k_3)}+2\mathcal{P}^{I,+}_{(k_1-1,k_1-3,k_3)}+\mathcal{P}^{I,+}_{(k_1-1,k_1-1,k_3+2)}\notag\\&&&\qquad+\mathcal{P}^{I,+}_{(k_1-1,k_1-1,k_3-2)}\,,\notag\\
&k_1=k_2=k_3+2>3:&\quad&\mathcal{P}^{I,+}_{(k_1,k_1,k_1-2)}=2\mathcal{P}^{I,+}_{(k_1-1,k_1-1,k_1-2)}X_2-2\mathcal{P}^{I,+}_{(k_1-1,k_1-2,k_1-2)}X_1
       -\frac23\mathcal{P}^{I,+}_{(k_1-1,k_1-1,k_1-1)}X_1\notag\\&&&\qquad-\mathcal{P}^{I,+}_{(k_1-1,k_1-1,k_1-3)}X_1+\mathcal{P}^{I,+}_{(k_1-2,k_1-2,k_1-2)}+5\mathcal{P}^{I,+}_{(k_1-1,k_1-1,k_1-2)}
     \notag \\&&&\qquad +4\mathcal{P}^{I,+}_{(k_1-1,k_1-2,k_1-3)}+\mathcal{P}^{I,+}_{(k_1-1,k_1-1,k_1-4)}\,,\notag\\
&k_1=k_2=k_3+1>2:&\quad& \mathcal{P}^{I,+}_{(k_1,k_1,k_1-1)}=\frac23\mathcal{P}^{I,+}_{(k_1-1,k_1-1,k_1-1)}X_2-\mathcal{P}^{I,+}_{(k_1-1,k_1-1,k_1-2)}X_1
        +\mathcal{P}^{I,+}_{(k_1-1,k_1-2,k_1-2)}\notag\\&&&\qquad+\mathcal{P}^{I,+}_{(k_1-1,k_1-1,k_1-1)}+\mathcal{P}^{I,+}_{(k_1-1,k_1-1,k_1-3)}\,,\notag\\
&k_1=k_2=k_3>3:&\quad& \mathcal{P}^{I,+}_{(k_1,k_1,k_1)}=\frac43 \mathcal{P}^{I,+}_{(k_1-1,k_1-1,k_1-1)}X_3-6\mathcal{P}^{I,+}_{(k_1-1,k_1-1,k_1-2)}X_2+3\mathcal{P}^{I,+}_{(k_1-1,k_1-2,k_1-2)}X_1\notag\\&&    &\qquad+2\mathcal{P}^{I,+}_{(k_1-1,k_1-1,k_1-1)}X_1+3\mathcal{P}^{I,+}_{(k_1-1,k_1-1,k_1-3)}X_1-\mathcal{P}^{I,+}_{(k_1-2,k_1-2,k_1-2)}
\notag\\&&    &\qquad-9\mathcal{P}^{I,+}_{(k_1-1,k_1-1,k_1-2)}-6\mathcal{P}^{I,+}_{(k_1-1,k_1-2,k_1-3)}-3\mathcal{P}^{I,+}_{(k_1-1,k_1-1,k_1-4)}
 \end{alignat}
together with additional polynomial expressions
\begin{equation}
\begin{alignedat}{2}
&\mathcal{P}^{I,+}_{(2,2,0)}&=&2X_2^2-2X_1^2
       -\frac43 X_1X_3+8X_2+6\,,\\
& \mathcal{P}^{I,+}_{(2,2,1)}&=&\frac23X_2X_3-X_1X_2+2X_3
        +X_1\,,\\      
&\mathcal{P}^{I,+}_{(2,2,2)}&=&\frac43X_3^2-6X_2^2+3X_1^2+4X_1X_2-12X_2-6 \,,\\
&\mathcal{P}^{I,+}_{(3,3,1)}&=&\frac43X_2^2X_3-\frac89X_1X_3^2-2X_1^2X_3+8X_2X_3+9X_3\,,\\
&\mathcal{P}^{I,+}_{(3,3,3)}&=&\frac{16}{9}X_3^3-12X_2^2X_3+8X_1X_3^2+9X_1^2X_3-36X_2X_3-27X_3 \,.\\       
\end{alignedat}
\end{equation}

Similarly, one may find recurrence relations for $\mathcal{P}^{I,-}_{(k_1,k_2,k_3)}$ and $\mathcal{P}^{III,\pm}_{(k_1,k_2,k_3)}$. The polynomials $\mathcal{P}^{I,\pm}_{(k_1,k_2,k_3)}$ and $\mathcal{P}^{III,\pm}_{(k_1,k_2,k_3)}$ of degree at most two are shown in Tables \ref{tabpIe} -- \ref{tabpIII-}.

\begin{table}
{\small
\begin{tabular}{|c|r|r|r|r|r|r|}
\hline
&$1$&$X_2$&$X_1^2$&$X_1X_3$&$X_2^2$&$X_3^2$\\ \hline
$\mathcal{P}^{I,+}_{(0,0,0)}$&$6$&&&&&\\ \hline
$\mathcal{P}^{I,+}_{(1,1,0)}$&$0$&$1$&&&&\\ \hline
$\mathcal{P}^{I,+}_{(2,0,0)}$&$-6$&$-4$&$1$&&&\\ \hline
$\mathcal{P}^{I,+}_{(2,1,1)}$&$0$&$-1$&$0$&$\frac13$&&\\ \hline
$\mathcal{P}^{I,+}_{(2,2,0)}$&$6$&$8$&$-2$&$-\frac43$&$2$&\\ \hline
$\mathcal{P}^{I,+}_{(2,2,2)}$&$-6$&$-12$&$3$&$4$&$-6$&$\frac43$\\ \hline
\end{tabular}}
\hspace{0.5cm}
{\small
\begin{tabular}{|c|r|r|r|r|r|r|}
\hline
&$1$&$X_2$&$X_1^2$&$X_1X_3$&$X_2^2$&$X_3^2$\\ \hline
$\mathcal{P}^{I,-}_{(0,0,0)}$&$1$&&&&&\\ \hline
$\mathcal{P}^{I,-}_{(1,1,0)}$&$3$&$2$&&&&\\ \hline
$\mathcal{P}^{I,-}_{(2,0,0)}$&$-4$&$-2$&$1$&&&\\ \hline
$\mathcal{P}^{I,-}_{(2,1,1)}$&$-3$&$-2$&$1$&$\frac23$&&\\ \hline
$\mathcal{P}^{I,-}_{(2,2,0)}$&$12$&$14$&$-3$&$-\frac43$&$4$&\\ \hline
$\mathcal{P}^{I,-}_{(2,2,2)}$&$-9$&$-12$&$3$&$\frac{10}{3}$&$-4$&$\frac89$\\ \hline
\end{tabular}}
\medskip
\caption{The coefficients of the polynomials $P^{I,\pm}_{(k_1,k_2,k_3)}$ with $k_1\leq2$ and $k_1+k_2+k_3$ even.}
\label{tabpIe}
\end{table}

\begin{table}
{\small
\begin{tabular}{|c|r|r|r|r|}
\hline
&$X_1$&$X_3$&$X_1X_2$&$X_2X_3$\\ \hline
$\mathcal{P}^{I,+}_{(1,0,0)}$&$1$&&&\\ \hline
$\mathcal{P}^{I,+}_{(1,1,1)}$&$0$&$1$&&\\ \hline
$\mathcal{P}^{I,+}_{(2,1,0)}$&$-1$&$-1$&$\frac12$&\\ \hline
$\mathcal{P}^{I,+}_{(2,2,1)}$&$1$&$2$&$-1$&$\frac23$\\ \hline
\end{tabular}}
\hspace{0.5cm}
{\small
\begin{tabular}{|c|r|r|r|r|}
\hline
&$X_1$&$X_3$&$X_1X_2$&$X_2X_3$\\ \hline
$\mathcal{P}^{I,-}_{(1,0,0)}$&$1$&&&\\ \hline
$\mathcal{P}^{I,-}_{(1,1,1)}$&$1$&$\frac23$&&\\ \hline
$\mathcal{P}^{I,-}_{(2,1,0)}$&$0$&$-\frac43$&$2$&\\ \hline
$\mathcal{P}^{I,-}_{(2,2,1)}$&$1$&$\frac83$&$0$&$\frac43$\\ \hline
\end{tabular}}
\medskip
\caption{The coefficients of the polynomials $\mathcal{P}^{I,\pm}_{(k_1,k_2,k_3)}$ with $k_1\leq2$ and $k_1+k_2+k_3$ odd.}
\label{tabpIo}
\end{table}

\begin{table}
{\small
\begin{tabular}{|c|r|r|r|r|r|r|r|r|r|r|}
\hline
&$1$&$X_1$&$X_2$&$X_3$&$X_1^2$&$X_1X_2$&$X_1X_3$&$X_2^2$&$X_2X_3$&$X_3^2$\\ \hline
$\mathcal{P}^{III,+}_{(0,0,0)}$&$1$&&&&&&&&&\\ \hline
$\mathcal{P}^{III,+}_{(1,0,0)}$&$-1$&$\frac13$&&&&&&&&\\ \hline
$\mathcal{P}^{III,+}_{(1,1,0)}$&$1$&$-\frac23$&$\frac23$&&&&&&&\\ \hline
$\mathcal{P}^{III,+}_{(1,1,1)}$&$-1$&$1$&$-2$&$\frac43$&&&&&&\\ \hline
$\mathcal{P}^{III,+}_{(2,0,0)}$&$-1$&$-\frac13$&$-\frac43$&$0$&$\frac13$&&&&&\\ \hline
$\mathcal{P}^{III,+}_{(2,1,0)}$&$1$&$0$&$\frac23$&$-\frac23$&$-\frac13$&$\frac13$&&&&\\ \hline
$\mathcal{P}^{III,+}_{(2,1,1)}$&$-1$&$\frac13$&$-\frac23$&$0$&$\frac13$&$-\frac23$&$\frac49$&&&\\ \hline
$\mathcal{P}^{III,+}_{(2,2,0)}$&$1$&$\frac23$&$\frac{10}{3}$&$\frac43$&$-\frac23$&$-\frac23$&$-\frac89$&$\frac43$&&\\ \hline
$\mathcal{P}^{III,+}_{(2,2,1)}$&$-1$&$-\frac13$&$-2$&$\frac43$&$\frac23$&$0$&$0$&$-\frac43$&$\frac89$&\\ \hline
$\mathcal{P}^{III,+}_{(2,2,2)}$&$-1$&$-1$&$-6$&$-\frac{16}{3}$&$1$&$2$&$4$&$-4$&$-\frac83$&$\frac{16}{9}$\\ \hline
\end{tabular}}
\medskip
\caption{The coefficients of the polynomials $\mathcal{P}^{III,+}_{(k_1,k_2,k_3)}$ with $k_1\leq2$.}
\label{tabpIII+}
\end{table}

\begin{table}
{\small
\begin{tabular}{|c|r|r|r|r|r|r|r|r|r|r|}
\hline
&$1$&$X_1$&$X_2$&$X_3$&$X_1^2$&$X_1X_2$&$X_1X_3$&$X_2^2$&$X_2X_3$&$X_3^2$\\ \hline
$\mathcal{P}^{III,-}_{(0,0,0)}$&$1$&&&&&&&&&\\ \hline
$\mathcal{P}^{III,-}_{(1,0,0)}$&$-1$&$1$&&&&&&&&\\ \hline
$\mathcal{P}^{III,-}_{(1,1,0)}$&$3$&$-1$&$2$&&&&&&&\\ \hline
$\mathcal{P}^{III,-}_{(1,1,1)}$&$-3$&$2$&$-2$&$\frac43$&&&&&&\\ \hline
$\mathcal{P}^{III,-}_{(2,0,0)}$&$-3$&$-1$&$-2$&$0$&$1$&&&&&\\ \hline
$\mathcal{P}^{III,-}_{(2,1,0)}$&$1$&$1$&$0$&$-\frac43$&$-1$&$2$&&&&\\ \hline
$\mathcal{P}^{III,-}_{(2,1,1)}$&$-3$&$-2$&$-2$&$0$&$2$&$-2$&$\frac43$&&&\\ \hline
$\mathcal{P}^{III,-}_{(2,2,0)}$&$8$&$0$&$12$&$\frac43$&$-2$&$-2$&$-\frac43$&$4$&&\\ \hline
$\mathcal{P}^{III,-}_{(2,2,1)}$&$-6$&$2$&$-10$&$4$&$1$&$2$&$0$&$-4$&$\frac83$&\\ \hline
$\mathcal{P}^{III,-}_{(2,2,2)}$&$-6$&$-2$&$-10$&$-\frac{16}{3}$&$3$&$0$&$\frac{16}{3}$&$-4$&$-\frac83$&$\frac{16}{9}$\\ \hline
\end{tabular}}
\medskip
\caption{The coefficients of the polynomials $\mathcal{P}^{III,-}_{(k_1,k_2,k_3)}$ with $k_1\leq2$.}
\label{tabpIII-}
\end{table}
\end{example}

\subsection{Continuous orthogonality}\

Continuous orthogonality of antisymmetric and symmetric cosine functions within each family is detailed in Section \ref{seccon}. Our goal is to reformulate the orthogonality relations \eqref{O1} -- \eqref{O4}  after the change of variables $(x_1,\dots,x_n)$ to polynomial variables $(X_1,\dots, X_n)$ and obtain the orthogonality relations for polynomials $\mathcal{P}^{I,\pm}_k$ and $\mathcal{P}^{III,\pm}_k$. In order to determine the corresponding weight functions in the integrals defining continuous orthogonality, we calculate the Jacobian of the change of variables $(X_1(x_1,\dots, x_n),\dots, X_n(x_1,\dots, x_n))$ to variables $(x_1,\dots, x_n)$.
\begin{tvr}
The determinant of the Jacobian matrix $J(x_1,\dots, x_n)\equiv\det\frac{\partial(X_1,\dots, X_n)}{\partial (x_1,\dots, x_n)}$ for the coordinate change from $(X_1,\dots, X_n)$ to $(x_1,\dots, x_n)$ is given by
\begin{equation}\label{jac}
J(x_1,\dots,x_n)=(-1)^{\frac{n(n+1)}{2}}\left(\frac12\right)^{\frac{n(n-1)}{2}}\pi^n\left(\prod_{i=1}^n(n-i)!i!\right)\sin^-_{(1,2,\dots,n)}(x_1,\dots,x_n)\,,
\end{equation}
where \begin{equation}
\begin{aligned}\label{for1}
\sin^-_{(1,2,\dots,n)}(x_1,\dots,x_n)&\equiv
2^{n(n-1)}\prod_{i=1}^n\sin(\pi x_i)\prod_{1\leq i<j\leq n}\sin\left(\frac{\pi}{2}(x_i+x_j)\right)\sin\left(\frac{\pi}{2}(x_i-x_j)\right)\,.
\end{aligned}
\end{equation}
\end{tvr}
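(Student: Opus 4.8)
The plan is to factor the coordinate change $(x_1,\dots,x_n)\mapsto(X_1,\dots,X_n)$ through the intermediate variables $y_i=\cos(\pi x_i)$. The key observation is that in these variables each $X_j$ becomes a constant multiple of an elementary symmetric polynomial: from the permanent definition \eqref{trigfun} one sees directly that $X_j=\cos^+_{(1,\dots,1,0,\dots,0)}$, with $j$ entries equal to $1$, satisfies $X_j=j!\,(n-j)!\,e_j(y_1,\dots,y_n)$, where $e_j$ denotes the $j$-th elementary symmetric polynomial, because for each fixed $j$-element subset $S\subseteq\{1,\dots,n\}$ there are precisely $j!\,(n-j)!$ permutations $\sigma\in S_n$ for which $\prod_{i=1}^n\cos(\pi\lambda_{\sigma(i)}x_i)$ collapses to $\prod_{i\in S}\cos(\pi x_i)$. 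Since the Jacobian matrix factors as $\frac{\partial(X_1,\dots,X_n)}{\partial(x_1,\dots,x_n)}=\frac{\partial(X_1,\dots,X_n)}{\partial(y_1,\dots,y_n)}\cdot\frac{\partial(y_1,\dots,y_n)}{\partial(x_1,\dots,x_n)}$ and the determinant is multiplicative, it is enough to evaluate the two factors.

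The second factor is diagonal, with entries $\partial y_i/\partial x_i=-\pi\sin(\pi x_i)$, so its determinant is $(-\pi)^n\prod_{i=1}^n\sin(\pi x_i)$. For the first factor I would write $\partial X_j/\partial y_i=j!\,(n-j)!\,e_{j-1}(y_1,\dots,\widehat{y}_i,\dots,y_n)$, factor out the constant $\prod_{j=1}^n j!\,(n-j)!=\prod_{i=1}^n(n-i)!\,i!$, and then appeal to the classical determinant identity $\det\bigl(e_{j-1}(y_1,\dots,\widehat{y}_i,\dots,y_n)\bigr)_{i,j=1}^{n}=\prod_{1\le i<k\le n}(y_i-y_k)$. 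This identity is the one genuinely non-routine ingredient; a clean argument multiplies the matrix $A=\bigl(e_{j-1}(y_1,\dots,\widehat{y}_i,\dots,y_n)\bigr)$ on the right by the Vandermonde-type matrix $B=\bigl((-y_m)^{n-j}\bigr)$ and uses the generating identity $\prod_{k\ne i}(t+y_k)=\sum_{l=0}^{n-1}e_l(y_1,\dots,\widehat{y}_i,\dots,y_n)\,t^{\,n-1-l}$, evaluated at $t=-y_m$, to conclude that $AB$ is diagonal with $i$-th diagonal entry $\prod_{k\ne i}(y_k-y_i)$; dividing by the explicitly known $\det B$ then yields exactly $\prod_{1\le i<k\le n}(y_i-y_k)$.

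Finally I would substitute $y_i=\cos(\pi x_i)$ back and collect the constants. By the identity $\cos(\pi x_i)-\cos(\pi x_k)=-2\sin\bigl(\tfrac\pi2(x_i+x_k)\bigr)\sin\bigl(\tfrac\pi2(x_i-x_k)\bigr)$ used in Proposition \ref{prop1}, the Vandermonde product equals $(-2)^{\binom n2}\prod_{1\le i<k\le n}\sin\bigl(\tfrac\pi2(x_i+x_k)\bigr)\sin\bigl(\tfrac\pi2(x_i-x_k)\bigr)$, and comparing with the definition \eqref{for1} one sees that the leftover product $\prod_{i=1}^n\sin(\pi x_i)$ from the diagonal factor is precisely the one absorbed into $\sin^-_{(1,2,\dots,n)}$ together with the factor $2^{n(n-1)}$, so it cancels. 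A short bookkeeping of the remaining powers of $2$ and signs, using $(-2)^{\binom n2}(-\pi)^n 2^{-n(n-1)}=(-1)^{n(n+1)/2}(1/2)^{n(n-1)/2}\pi^n$ (here $n+\binom n2=\tfrac{n(n+1)}2$), produces exactly \eqref{jac}. Thus the only real obstacle is the elementary-symmetric determinant identity above; everything else is the chain-rule factorization and trigonometric rewriting.
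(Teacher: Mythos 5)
Your proposal is correct and follows essentially the same route as the paper: both identify $X_j$ as $j!\,(n-j)!$ times the $j$-th elementary symmetric polynomial in the variables $\cos(\pi x_i)$, reduce the Jacobian to $(-\pi)^n\prod_i\sin(\pi x_i)$ times a Vandermonde determinant in $\cos(\pi x_i)$, and convert that Vandermonde product via the same product-to-sum identity already used in Proposition \ref{prop1}. The only difference is organizational: where the paper performs the column reduction directly on the matrix $\bigl(\partial X_j/\partial x_k\bigr)$ (its nested expression for the partial derivatives encodes exactly the expansion of $e_{j-1}(y_1,\dots,\widehat{y}_k,\dots,y_n)$ in powers of $y_k$), you factor through the chain rule and invoke --- and correctly prove --- the identity $\det\bigl(e_{j-1}(y_1,\dots,\widehat{y}_i,\dots,y_n)\bigr)=\prod_{i<k}(y_i-y_k)$, a cleaner and more modular packaging of the same computation; your sign and power-of-two bookkeeping checks out.
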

\begin{proof} We prove the formulas \eqref{jac}, \eqref{for1} by direct calculation.
By the definition of the variables $X_j$ via symmetric cosine functions, we have
$$X_j=(n-j)!j!\sum_{\setcomb{i_1,\dots,i_j=1}{i_1<\dots <i_j}}^n\cos{(\pi x_{i_1})}\dots \cos{(\pi x_{i_j})}\,.$$
Thus, the partial derivatives of $X_j$ with respect to $x_k$ are given by
\begin{align*}
j=1: \quad \frac{\partial X_1}{\partial x_k}&=-\pi (n-1)!1! \sin{(\pi x_k)}\,,\\
j\geq2:\quad \frac{\partial X_j}{\partial x_k}&=-\pi (n-j)!j! \sin{(\pi x_k)}\left(\sum_{\setcomb{i_1,\dots,i_{j-1}=1}{i_1<\dots <i_{j-1}}}^n\cos{(\pi x_{i_1})}\dots \cos{(\pi x_{i_{j-1}})}\right.\\&
-\cos{(\pi x_k)}\left(\sum_{\setcomb{i_1,\dots,i_{j-2}=1}{i_1<\dots <i_{j-2}}}^n\cos{(\pi x_{i_1})}\dots \cos{(\pi x_{i_{j-2}})}\right.\\&\left.\left.-\cos{(\pi x_k)}\left(\dots -\cos{(\pi x_k)}\left(\sum_{i_1=1}^n\cos{(\pi x_{i_1})}-\cos{(\pi x_k)}\right)\dots\right)\right)\right)\,.
\end{align*}
Therefore, using the properties of determinants, we rewrite the Jacobian as 
$$J(x_1,\dots, x_n)=\det\left(\begin{smallmatrix}
-\pi(n-1)!1!\sin(\pi x_1)&\pi(n-2)!2!\sin(\pi x_1)\cos(\pi x_1)&\dots&(-1)^{n}\pi 0!n!\sin(\pi x_1)\cos^{n-1}(\pi x_1)\\
-\pi(n-1)!1!\sin(\pi x_2)&\pi(n-2)!2!\sin(\pi x_2)\cos(\pi x_2)&\dots&(-1)^{n}\pi 0!n!\sin(\pi x_2)\cos^{n-1}(\pi x_2)\\
\vdots&\vdots&&\vdots\\
-\pi(n-1)!1!\sin(\pi x_n)&\pi(n-2)!2!\sin(\pi x_n)\cos(\pi x_n)&\dots&(-1)^{n}\pi 0!n!\sin(\pi x_n)\cos^{n-1}(\pi x_n)
\end{smallmatrix}\right)\,.$$
Comparing this form of the Jacobian with \eqref{jac}, we observe that
$$\sin_{(1,2,\dots,n)}^-(x_1,\dots,x_n)=2^{\frac{n(n-1)}{2}}\det\left(\begin{smallmatrix}
\sin(\pi x_1)&\sin(\pi x_1)\cos(\pi x_1)&\dots&\sin(\pi x_1)\cos^{n-1}(\pi x_1)\\
\sin(\pi x_2)&\sin(\pi x_2)\cos(\pi x_2)&\dots&\sin(\pi x_2)\cos^{n-1}(\pi x_2)\\
\vdots&\vdots&&\vdots\\
\sin(\pi x_n)&\sin(\pi x_n)\cos(\pi x_n)&\dots&\sin(\pi x_n)\cos^{n-1}(\pi x_n)
\end{smallmatrix}\right)\,.$$
Therefore, the equality \eqref{for1} follows directly from the proof of Proposition \ref{prop1}.
\end{proof}

One verifies that $\sin^-_{(1,2,\dots,n)}\cdot\sin^-_{(1,2,\dots,n)}$ decomposes into a sum of symmetric cosine functions labeled by integer parameters
$$\sin^-_{(1,2,\dots,n)}\cdot\sin^-_{(1,2,\dots,n)}=\frac{1}{2^n}\sum_{\sigma\in S_n} \mathrm{sgn}\,(\sigma)\sum_{\setcomb{a_i=\pm1}{i=1,\dots,n}}(-1)^{\alpha(a_1,\dots,a_n)}\cos^+_{(1+a_1\sigma(1),\dots, n+a_n\sigma(n))}\,,$$
where $\alpha(a_1,\dots,a_n)$ is the number of positive numbers among $a_1,\dots, a_n$.
Therefore due to Proposition~\ref{proppol} the square of the Jacobian $J^2$ is expressed as a polynomial $p^{I,+}$ in $X_1,\dots, X_n$ of degree $2n$,
\begin{equation*}
J(x)^2=p^{I,+}(X_1(x),\dots, X_n(x)).	
\end{equation*}
The polynomial $p^{I,+}$, being the square of the Jacobian $J$, satisfies
\begin{equation}\label{pos}
p^{I,+}(X_1(x),\dots, X_n(x))\geq 0.	
\end{equation}
Moreover, note that due to the equality \eqref{for1} the Jacobian $J$ does not vanish for any point in the interior of $F(\wt{S}_n^{\mathrm{aff}})$ and therefore
\begin{equation}
	p^{I,+}(X_1(x),\dots, X_n(x))>0,\qquad x\in F(\wt{S}_n^{\mathrm{aff}})^\circ .
\end{equation}

The domain $\mathfrak{F}(\wt{S}_n^{\mathrm{aff}})$ is the defined as the transformed domain $F(\wt{S}_n^{\mathrm{aff}})$, i.e.,
\begin{equation*}
\mathfrak{F}(\wt{S}_n^{\mathrm{aff}})=\set{(X_1(x),\dots,X_n(x))}{x\in F(\wt{S}_n^{\mathrm{aff}})}.
\end{equation*}
Fig. \ref{figgauss} shows the domain $\mathfrak{F}(\wt{S}_n^{\mathrm{aff}})$  for $n=3$.
Note that the relation \eqref{pos} implies that
\begin{equation*}
p^{I,+}(X_1,\dots, X_n)\geq 0, \qquad (X_1,\dots,X_n)\in \mathfrak{F}(\wt{S}_n^{\mathrm{aff}}). 
\end{equation*}
Thus, we use the polynomial $p^{I,+}$ to rewrite the absolute value of the Jacobian $J$ as a function $\mathcal{J} $ of $X_1,\dots, X_n$ on $\mathfrak{F}(\wt{S}_n^{\mathrm{aff}})$, 
\begin{equation}
\mathcal{J}(X_1,\dots, X_n)=\sqrt{p^{I,+}(X_1,\dots, X_n)},\quad (X_1,\dots,X_n)\in \mathfrak{F}(\wt{S}_n^{\mathrm{aff}}),
\end{equation}
i.e., it holds that
\begin{equation*}
|J(x)|=\sqrt{p^{I,+}(X_1(x),\dots, X_n(x))}.	
\end{equation*}
Similarly, it follows from \eqref{prod} that the products $\cos^-_{\rho_1}\cdot\cos^-_{\rho_1}$, $\cos^+_\rho\cdot\cos^+_\rho$,  and $\cos^-_{\rho_2}\cdot\cos^-_{\rho_2}$ are expressed as polynomials $\mathcal{J}^{I,-}$, $\mathcal{J}^{III,+}$, and  $\mathcal{J}^{III,-}$ in $X_1,\dots, X_n$ such that
\begin{align}
\mathcal{J}^{I,-}(X_1(x),\dots, X_n(x))&=\cos^-_{\rho_1}(x)\cos^-_{\rho_1}(x), \\
\mathcal{J}^{III,+}(X_1(x),\dots, X_n(x))&=\cos^+_\rho(x)\cos^+_\rho (x),\\
\mathcal{J}^{III,-}(X_1(x),\dots, X_n(x))&=\cos^-_{\rho_2}(x)\cos^-_{\rho_2}(x).
\end{align}

\begin{example}
For $n=3$ are the explicit forms of the polynomial $p^{I,+}$  determined by
\begin{align*}
p^{I,+}(X_1,X_2, X_3) &=\pi^6\left(-8 X_2^3 + X_1^2X_2^2 - 12 X_3^2 + 
   12 X_1X_2X_3 - 
   \frac{4}{3} X_1^3X_3\right) ((3X_2+6)^2-(X_3+3X_1)^2), 
	\end{align*}
and the polynomials $\mathcal{J}^{I,-}$, $\mathcal{J}^{III,+}$, and  $\mathcal{J}^{III,-}$	are given by
\begin{align*}
\mathcal{J}^{I,-}(X_1,X_2, X_3)&=\frac14\left(-8 X_2^3 + X_1^2X_2^2 - 12 X_3^2 + 
   12 X_1X_2X_3 - 
   \frac{4}{3} X_1^3X_3\right)\,,\\
\mathcal{J}^{III,+}(X_1,X_2, X_3)&=\frac34(X_3+3X_2+3X_1+6)\,,\\
\mathcal{J}^{III,-}(X_1,X_2, X_3)&=\frac18 (X_3+3X_2+3X_1+6) \left(-8 X_2^3 + X_1^2X_2^2 - 12 X_3^2 + 
   12 X_1X_2X_3 - 
   \frac{4}{3} X_1^3X_3\right)\,.
\end{align*}
\end{example}

In order to use the integration by substitution, we show that the transform 
\begin{equation}\label{subst}
	\varphi:x\in F(\wt{S}^\mathrm{aff}_n)\map (X_1(x),\dots, X_n(x))\in\mathfrak{F}(\wt{S}_n^{\mathrm{aff}})
\end{equation}
is in fact one-to-one correspondence. Any continuous function $f$ on $F(\wt{S}^\mathrm{aff}_n)$ is expanded in terms of symmetric cosine functions \cite{KPtrig} and thus also in terms of polynomials $\mathcal{P}^{I,+}_k$ as $$f(x)=\sum_k B_k \mathcal{P}^{I,+}_k(X_1(x),\dots, X_n(x))\,.$$
Assume that there are $y,\wt{y}\in F(\wt{S}^\mathrm{aff}_n)$ such that $y\neq \wt{y}$ and $(X_1(y),\dots, X_n(y))=(X_1(\wt{y}),\dots, X_n(\wt{y}))$. Let us define a continuous function $f(x)=x_i$, where the index $i$ is chosen in such a way that $y_i\neq \wt{y}_i$. Using the expansion in polynomials, we obtain that $f(y)=f(\wt{y})$, which is in contradiction with $y_i\neq \wt{y}_i$.
Therefore, we conclude that the transform $\varphi$ is injective.

Due to the orthogonality relations \eqref{O1}--\eqref{O4}, Proposition \ref{proppol}, and integration by substitution \eqref{subst}, we deduce the continuous orthogonality relations for polynomials $\mathcal{P}^{I,\pm}_k,\mathcal{P}^{III,\pm}_k$ in the following statement.
\begin{tvr}\label{orthogbasis}
Let $X=(X_1,\dots, X_n)$ and $dX=dX_1\dots dX_n$.
Each family of polynomials $\mathcal{P}^{I,\pm}_k$, $\mathcal{P}^{III,\pm}_k$ forms an orthogonal basis of the vector space of all polynomials in $\R[X]$ with the scalar product defined by the weighted integral of two polynomials $f,g$,
$$\int_{\mathfrak{F}(\wt{S}_n^{\mathrm{aff}})} f(X)g(X)w(X)\,dX\,,$$
where
$w(X)$ denotes the weight function for each family of polynomials, $$w(X)=\begin{cases}
w^{I,+}(X)=\frac{1}{\mathcal{J}(X)}&\text{for } \mathcal{P}^{I,+}_k\,,\\
w^{I,-}(X)=\frac{\mathcal{J}^{I,-}(X)}{\mathcal{J}(X)}&\text{for } \mathcal{P}^{I,-}_k\,,\\
w^{III,+}(X)=\frac{\mathcal{J}^{III,+}(X)}{\mathcal{J}(X)}&\text{for } \mathcal{P}^{III,+}_k\,,\\
w^{III,-}(X)=\frac{\mathcal{J}^{III,-}(X)}{\mathcal{J}(X)}&\text{for } \mathcal{P}^{III,-}_k\,.
\end{cases}$$
The continuous orthogonality relations are of the form
\begin{equation}\label{contorthogpol}
\begin{alignedat}{2}
&\int\limits_{\mathfrak{F}(\wt{S}_n^{\mathrm{aff}})}
    \mathcal{P}^{I,+}_k(X)\mathcal{P}^{I,+}_{k'}(X)w^{I,+}(X)\,dX
   &= &h_kH_k\delta_{kk'}\,,\\
&\int\limits_{\mathfrak{F}(\wt{S}_n^{\mathrm{aff}})}
    \mathcal{P}^{I,-}_k(X)\mathcal{P}^{I,-}_{k'}(X)w^{I,-}(X)\,dX
   &= &h_k\delta_{kk'}\,, \\  
   &\int\limits_{\mathfrak{F}(\wt{S}_n^{\mathrm{aff}})}
    \mathcal{P}^{III,+}_k(X)\mathcal{P}^{III,+}_{k'}(X)w^{III,+}(X)\,dX
   &= &2^{-n}H_k\delta_{kk'}\,,\\
  & \int\limits_{\mathfrak{F}(\wt{S}_n^{\mathrm{aff}})}
    \mathcal{P}^{III,-}_k(X)\mathcal{P}^{III,-}_{k'}(X)w^{III,-}(X)\,dX
   &= &2^{-n}\delta_{kk'}\,.
   \end{alignedat}
\end{equation}
\end{tvr}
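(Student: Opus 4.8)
The plan is to derive all four identities in \eqref{contorthogpol} from the known continuous orthogonality relations \eqref{O1}--\eqref{O4} by pushing the integrals over $\mathfrak{F}(\wt{S}_n^{\mathrm{aff}})$ back to integrals over $F(\wt{S}_n^{\mathrm{aff}})$ through the substitution $\varphi$ of \eqref{subst}. First I would strengthen the injectivity of $\varphi$ established above to the statement that $\varphi$ maps $F(\wt{S}_n^{\mathrm{aff}})^\circ$ diffeomorphically onto an open subset of $\mathfrak{F}(\wt{S}_n^{\mathrm{aff}})$ whose complement is Lebesgue-null: the functions $X_1,\dots,X_n$ are smooth on a neighborhood of $F(\wt{S}_n^{\mathrm{aff}})$, the Jacobian $J$ is nonzero on $F(\wt{S}_n^{\mathrm{aff}})^\circ$ by \eqref{jac}--\eqref{for1}, hence $\varphi$ is there a local diffeomorphism, and being also injective it is a diffeomorphism onto its image, while the image of the (lower-dimensional, compact) boundary of $F(\wt{S}_n^{\mathrm{aff}})$ under the $C^1$ map $\varphi$ is Lebesgue-null. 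Integration by substitution then gives, for every integrable $G$ on $\mathfrak{F}(\wt{S}_n^{\mathrm{aff}})$,
\[
\int_{\mathfrak{F}(\wt{S}_n^{\mathrm{aff}})}G(X)\,dX=\int_{F(\wt{S}_n^{\mathrm{aff}})}G\big(X_1(x),\dots,X_n(x)\big)\,\abs{J(x)}\,dx .
\]

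Next I would apply this identity to $G=\mathcal{P}^{I,+}_k\mathcal{P}^{I,+}_{k'}w^{I,+}$; using $\mathcal{J}(X_1(x),\dots,X_n(x))=\abs{J(x)}$ and the defining relations \eqref{newpol}, the integrand on the right collapses to $\cos^+_k(x)\cos^+_{k'}(x)$ and the factors $\abs{J}$ cancel. The other three families are treated identically once one records, from the definitions of $\mathcal{J}^{I,-},\mathcal{J}^{III,+},\mathcal{J}^{III,-}$, that $\mathcal{J}^{I,-}(X(x))=\cos^-_{\rho_1}(x)^2$, $\mathcal{J}^{III,+}(X(x))=\cos^+_{\rho}(x)^2$, $\mathcal{J}^{III,-}(X(x))=\cos^-_{\rho_2}(x)^2$; then, for instance, $\mathcal{P}^{I,-}_k\mathcal{P}^{I,-}_{k'}w^{I,-}\abs{J}$ reduces to $\cos^-_{k+\rho_1}\cos^-_{k'+\rho_1}$. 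Hence the four left-hand sides of \eqref{contorthogpol} equal $\int_{F(\wt{S}_n^{\mathrm{aff}})}\cos^+_k\cos^+_{k'}\,dx$, $\int_{F(\wt{S}_n^{\mathrm{aff}})}\cos^-_{k+\rho_1}\cos^-_{k'+\rho_1}\,dx$, $\int_{F(\wt{S}_n^{\mathrm{aff}})}\cos^+_{k+\rho}\cos^+_{k'+\rho}\,dx$ and $\int_{F(\wt{S}_n^{\mathrm{aff}})}\cos^-_{k+\rho_2}\cos^-_{k'+\rho_2}\,dx$. I would then use that $k\mapsto k+\rho_1$ is a bijection $P^+\to P^{++}$ and that $k+\rho_2=(k+\rho_1)+\rho$ with $k+\rho_1\in P^{++}$, so that \eqref{O3}, \eqref{O1}, \eqref{O4}, \eqref{O2} respectively produce the right-hand sides of \eqref{contorthogpol} once the reindexing is absorbed into the constants $h_k$ and $H_k$; in particular each of the four bilinear forms is diagonal with strictly positive diagonal entries.

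For the orthogonal-basis claim I would first note that every weight is positive on the interior of $\mathfrak{F}(\wt{S}_n^{\mathrm{aff}})$: $\mathcal{J}(X(x))=\abs{J(x)}>0$ there by \eqref{for1}, while $\mathcal{J}^{I,-}(X(x))$, $\mathcal{J}^{III,\pm}(X(x))$ are squares of functions that do not vanish on $F(\wt{S}_n^{\mathrm{aff}})^\circ$ by Corollary~\ref{nonzero}; hence $(f,g)\mapsto\int_{\mathfrak{F}(\wt{S}_n^{\mathrm{aff}})}fg\,w\,dX$ is a genuine inner product on $\R[X_1,\dots,X_n]$, and by the previous paragraph the members of each family are mutually orthogonal with nonzero norm, so linearly independent. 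By Proposition~\ref{proppol}, each member of a family has degree $k_1$, and for every $d\ge0$ the number of its members with $k_1\le d$ is $\sum_{j=0}^{d}\binom{j+n-1}{j}=\binom{d+n}{d}$, which equals $\dim\set{p\in\R[X_1,\dots,X_n]}{\deg p\le d}$. A linearly independent set of that many polynomials, all of degree at most $d$, must be a basis of the space of polynomials of degree at most $d$; letting $d$ grow shows that each of the four families is an orthogonal basis of $\R[X_1,\dots,X_n]$, which is the assertion.

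The only genuinely delicate step is the first one: one must be careful that $\varphi$ is a bona fide change of variables — a diffeomorphism on the interiors, with the relevant boundaries Lebesgue-null — so that integration by substitution is legitimate, and that the weights $w$, which may be unbounded near the boundary of $\mathfrak{F}(\wt{S}_n^{\mathrm{aff}})$, still give convergent integrals; the latter is automatic from the $x$-side, where the cosine functions are bounded and $F(\wt{S}_n^{\mathrm{aff}})$ is compact. Once the substitution is justified, the rest is the bookkeeping of the index-set bijections and of the constants $h_k$, $H_k$, $2^{-n}$.
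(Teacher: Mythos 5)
Your proposal is correct and follows essentially the same route as the paper, which justifies the proposition in a single sentence by citing the continuous orthogonality relations \eqref{O1}--\eqref{O4}, Proposition \ref{proppol}, and integration by substitution via $\varphi$; you merely supply the details the paper leaves implicit (the diffeomorphism-plus-null-boundary justification of the change of variables, the cancellation of $\abs{J}$ against the weights, the bijections $k\mapsto k+\rho_1$ and $k\mapsto k+\rho_2$, and the dimension count $\sum_{j=0}^{d}\binom{j+n-1}{j}=\binom{d+n}{d}$). One small remark: for the family $\mathcal{P}^{I,-}_k$ your computation actually yields the constant $h_{k+\rho_1}$ rather than $h_k$ (these differ, e.g.\ for $k=(0,\dots,0)$ one has $h_k=1$ but $h_{k+\rho_1}=2^{1-n}$), so the step in which ``the reindexing is absorbed into the constants'' deserves to be made explicit --- although this normalization already appears as $h_k$ in the paper's own statement of the proposition.
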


\section{Cubature formulas}\label{seccub}


\subsection{Sets of nodes}\

In Sections \ref{AMDCT} and \ref{SMDCT}, the discrete orthogonality relations of the four symmetric cosine functions and the four antisymmetric cosine transforms over finite sets $F_N^{\mathrm{V},\pm},\dots, F_N^{\mathrm{VIII},\pm}$ are detailed. The orthogonality relations of the types I--IV, which are described in  \cite{KPtrig},  are defined over the sets $F^{\mathrm{I},\pm}_N,\dots, F^{\mathrm{IV},\pm}_N$ given by 
\begin{align}
F_N^{\mathrm{I},\pm}&=\setb{\left(\frac{r_1}{N},\dots,\frac{r_n}{N}\right)}{(r_1,\dots, r_n)\in D^\pm_{N+1}}\,,\\
F_N^{\mathrm{II},\pm}&=\setb{\left(\frac{r_1+\frac12}{N},\dots,\frac{r_n+\frac12}{N}\right)}{(r_1,\dots, r_n)\in D^\pm_N}\,,\\
F_N^{\mathrm{III},\pm}&=\setb{\left(\frac{r_1}{N},\dots,\frac{r_n}{N}\right)}{(r_1,\dots,r_n)\in D^\pm_N}\,,\\
F_N^{\mathrm{IV},\pm}&= F_N^{\mathrm{II},\pm}\,.
\end{align}

The families of polynomials $\mathcal{P}_k^{I,\pm}$ and $\mathcal{P}_k^{III,\pm}$ inherit these orthogonality relations which  are crucial for the validity of cubature formulas. 
The sets of nodes arise from the finite sets $F_N^{\mathrm{I},\pm},\dots, F_N^{\mathrm{VIII},\pm}$  as images of the transform $\varphi$, given by \eqref{subst}. Denoting these images of the sets $F_N^{\mathrm{I},\pm},\dots, F_N^{\mathrm{VIII},\pm}$ as $\mathfrak{F}_N^{\mathrm{I},\pm},\dots, \mathfrak{F}_N^{\mathrm{VIII},\pm}$, i.e.,
\begin{equation}
\mathfrak{F}_N^{t,\pm}\equiv \set{\varphi(s)}{s\in F^{t,\pm}_N}\,,\quad t\in\{\mathrm{I},\dots,\mathrm{VIII}\}\,,
\end{equation} 
we prove that the restriction of $\varphi$ on any set $F^{t,\pm}_N$ is injective in the following Proposition.
\begin{tvr}\label{mrizky}
$$\abs{\mathfrak{F}_N^{t,\pm}}=\abs{F_N^{t,\pm}}\,,\quad t\in\{\mathrm{I},\dots,\mathrm{VIII}\}\,.$$
\end{tvr}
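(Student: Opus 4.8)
The plan is to show that each grid $F_N^{t,\pm}$ lies inside the closed simplex $F(\wt{S}_n^{\mathrm{aff}})=\set{(x_1,\dots,x_n)\in\R^n}{1\ge x_1\ge\dots\ge x_n\ge0}$ and then to invoke the injectivity of $\varphi$ on that simplex, which was already established above. The inclusion $F_N^{t,\pm}\subseteq F(\wt{S}_n^{\mathrm{aff}})$ is read off directly from the defining inequalities of the index sets $D_N^{\pm}$ (and $D_{N+1}^{\pm}$ for $t=\mathrm{I}$): in every case the relevant node coordinate is one of $\tfrac{2r_i}{2N-1}$, $\tfrac{2r_i+1}{2N-1}$, $\tfrac{2r_i+1}{2N+1}$, $\tfrac{r_i}{N}$, or $\tfrac{2r_i+1}{2N}$, hence lies in $[0,1]$, and the ordering $r_1\ge r_2\ge\dots\ge r_n$ (respectively $r_1>r_2>\dots>r_n$) makes the resulting $n$-tuple non-increasing, i.e.\ it satisfies $1\ge x_1\ge\dots\ge x_n\ge0$. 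Once this inclusion is in place, the restriction of the injective map $\varphi$ to $F_N^{t,\pm}$ is injective, so that $\abs{\mathfrak F_N^{t,\pm}}=\abs{\varphi(F_N^{t,\pm})}=\abs{F_N^{t,\pm}}$.

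For a self-contained proof of the injectivity of $\varphi$ I would argue directly. By the definition \eqref{var} of the variables $X_j$ one has $X_j(x)=(n-j)!\,j!\,e_j\!\left(\cos(\pi x_1),\dots,\cos(\pi x_n)\right)$, where $e_j$ denotes the $j$-th elementary symmetric polynomial; hence $\varphi(x)$ determines the monic polynomial $\prod_{i=1}^n\!\left(T-\cos(\pi x_i)\right)$ and therefore the multiset $\{\cos(\pi x_1),\dots,\cos(\pi x_n)\}$. Since $t\mapsto\cos(\pi t)$ is strictly decreasing, hence a bijection of $[0,1]$ onto $[-1,1]$, for $x\in F(\wt{S}_n^{\mathrm{aff}})$ the tuple $\left(\cos(\pi x_1),\dots,\cos(\pi x_n)\right)$ is exactly the non-decreasing rearrangement of that multiset; applying $\tfrac1{\pi}\arccos$ coordinatewise recovers $x$. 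Thus $\varphi$ is injective on all of $F(\wt{S}_n^{\mathrm{aff}})$, in particular on each $F_N^{t,\pm}$.

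The computation is routine bookkeeping and I do not expect a genuine obstacle. The only point that deserves a moment's care is that several of the grids contain boundary points of $F(\wt{S}_n^{\mathrm{aff}})$ — coordinates equal to $0$ or $1$, or repeated coordinates (as happens on the symmetric grids and on the type-$\mathrm{I}$ grids built from $D_{N+1}^{\pm}$) — so the injectivity argument must not rely on interiority. The argument above avoids this, since it uses only that $t\mapsto\cos(\pi t)$ is a bijection of the closed interval $[0,1]$ onto $[-1,1]$; it therefore handles the boundary uniformly, and it applies verbatim to the four sets $F_N^{\mathrm{I},\pm},\dots,F_N^{\mathrm{IV},\pm}$ of \cite{KPtrig}, so all eight types are covered at once.
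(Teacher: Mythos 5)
Your proof is correct, but it takes a genuinely different route from the paper's. The paper argues grid by grid: for each of the sixteen sets $F_N^{t,\pm}$ it invokes the corresponding discrete transform (e.g.\ SMDCT~I for $F_N^{\mathrm{I},+}$), by which any function on the finite grid --- in particular each coordinate function $s\mapsto s_i$ --- is a \emph{finite} linear combination of (anti)symmetric cosine functions and hence a polynomial in $X_1,\dots,X_n$; two distinct grid points with the same image under $\varphi$ would then agree under every such function, a contradiction. You instead prove injectivity of $\varphi$ on the whole closed simplex $F(\wt{S}_n^{\mathrm{aff}})$ once and for all, via the identity $X_j=(n-j)!\,j!\,e_j\bigl(\cos(\pi x_1),\dots,\cos(\pi x_n)\bigr)$ and Vieta's formulas: $\varphi(x)$ determines the multiset of the $\cos(\pi x_i)$, the ordering on $F(\wt{S}_n^{\mathrm{aff}})$ together with the strict monotonicity of $t\mapsto\cos(\pi t)$ pins down the arrangement, and $\tfrac1\pi\arccos$ recovers $x$. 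Your argument is more elementary (it uses no discrete orthogonality at all), disposes of all sixteen grids --- boundary points included --- in one stroke after the routine inclusion $F_N^{t,\pm}\subseteq F(\wt{S}_n^{\mathrm{aff}})$, which you verify correctly, and it also serves as a rigorous replacement for the paper's earlier, somewhat informal justification that $\varphi$ is injective on $F(\wt{S}_n^{\mathrm{aff}})$ (there obtained by expanding an arbitrary continuous function in an \emph{infinite} cosine series, a step that tacitly assumes pointwise convergence). What the paper's per-grid argument buys in exchange is that it stays entirely within the exact, finite-dimensional framework of the discrete transforms already developed, at the cost of repeating the argument for each grid.
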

\begin{proof}
Suppose that there exists $s,\wt s\in F_N^{\mathrm{I},+} $ such that $s\neq \wt s$ and $\varphi(s)=\varphi(\wt s)$, i.e., $$(X_1(s),\dots,X_n(s))=(X_1(\wt s),\dots, X_n(\wt s)).$$
According to the expansion resulting from SMDCT I in \cite{KPtrig}, any function $f$ given on $F_N^{\mathrm{I},+}$ is written as a linear combination of a finite number of functions $\cos^+_k$ and therefore as a polynomial in $X_1,\dots, X_n$. Using this expansion in polynomials,  we have for the function $f(s)=s_i$ (with $i$ chosen in such a way that $s_i\neq \wt s_i$) that $f(s)= f(\wt s)$ -- which contradicts $s\neq \wt s$. Thus, we obtain $\abs{\mathfrak{F}_N^{\mathrm{I},+}}=\abs{F_N^{\mathrm{I},+}}$.
Using the corresponding discrete orthogonality relations, the proof for other grids is similar.
\end{proof}

As an example, the points of the set $\mathcal{F}^{\mathrm{II},+}_{N}$ for $N=5,10,15$ in dimension $3$ are depicted in Fig. \ref{figgauss}.

\begin{figure}
\includegraphics[width=5cm]{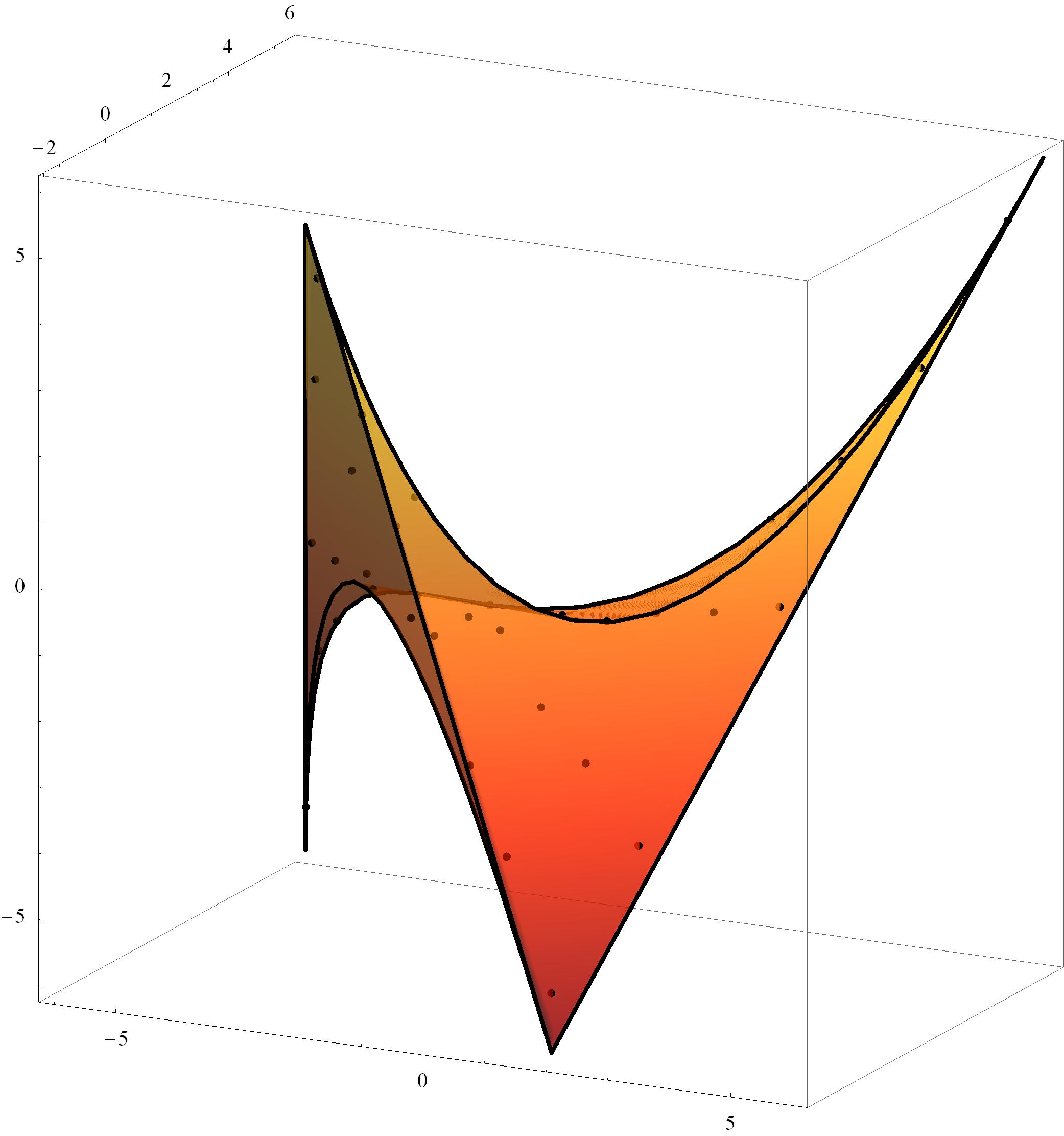}
\includegraphics[width=5cm]{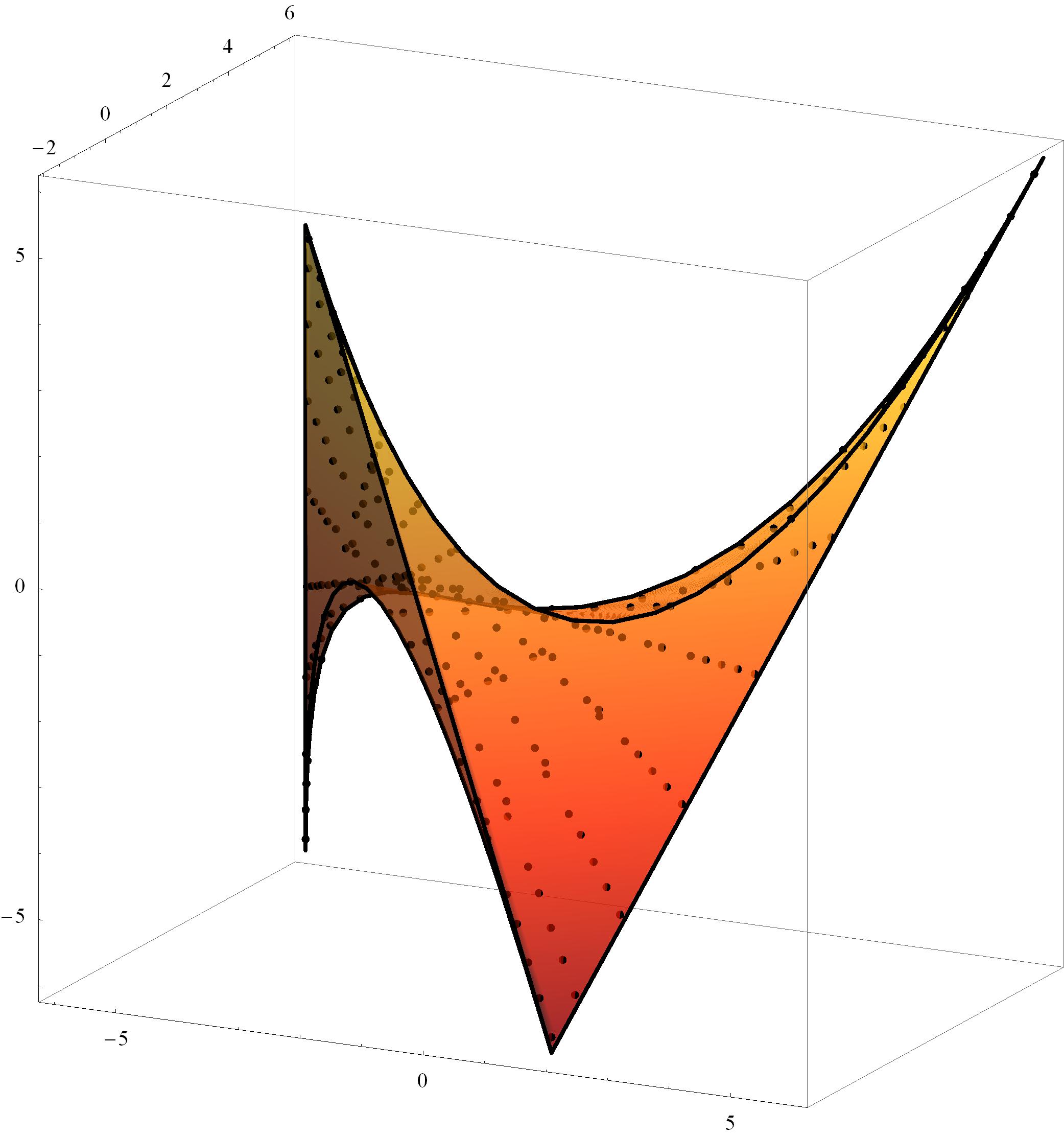}
\includegraphics[width=5cm]{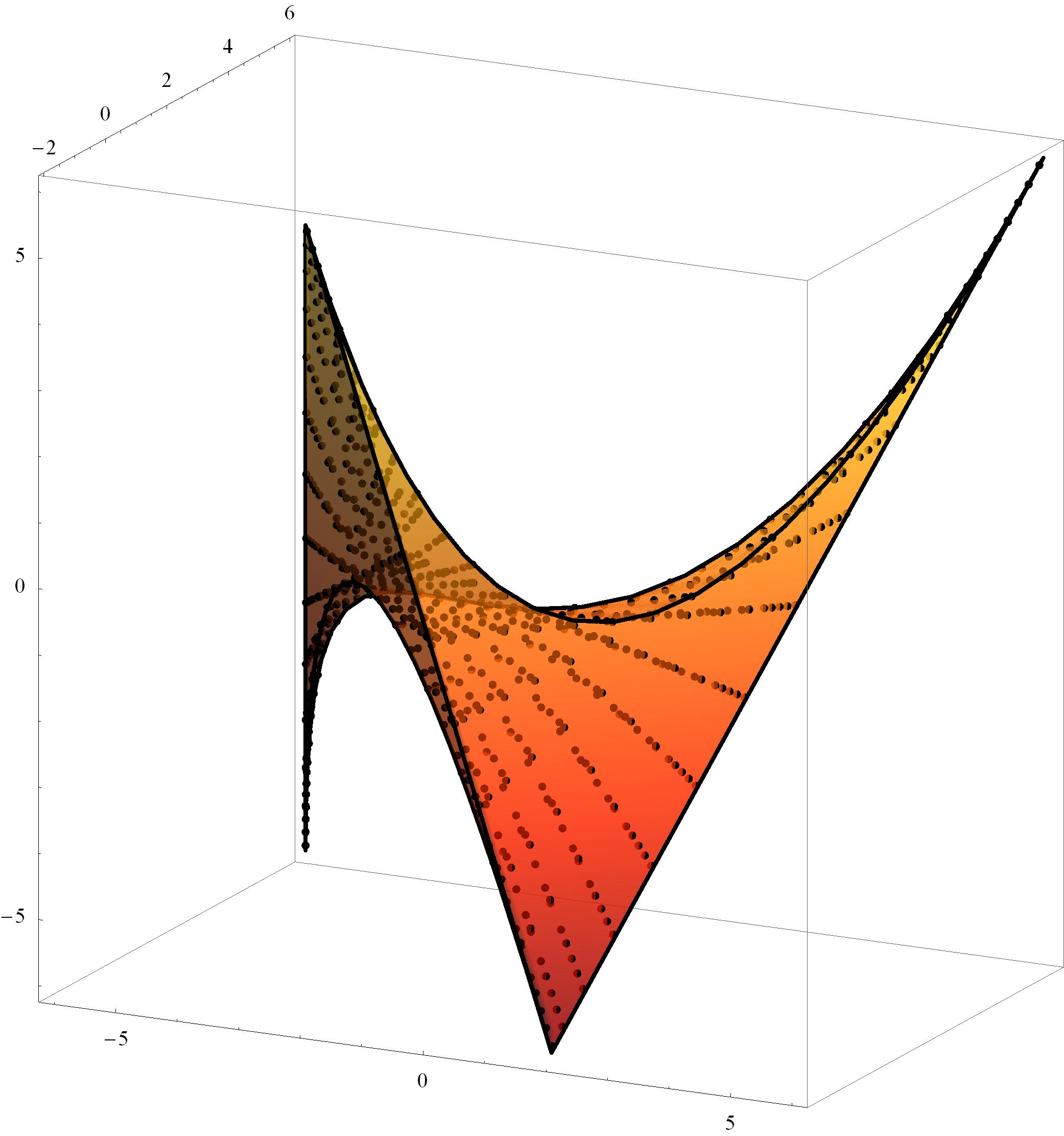}
\caption{The integration domain $\mathcal{F}(\wt{S}_3^{\mathrm{aff}})$ together with points of $\mathcal{F}^{\mathrm{II},+}_{N}$ for $N=5,10,15$.}
\label{figgauss}
\end{figure}

Due to the one-to-one correspondence given by $\varphi$ restricted to the grids $F_N^{t,\pm}$, the symbols $\mathcal{H}_Y,\mathcal{E}_Y$, and $\wt{\mathcal{E}}_Y$ are for $\varphi(s)=Y$ well defined by the relations
\begin{equation}
\mathcal{H}_Y\equiv H_s\,,\quad \mathcal{E}_Y\equiv\ep_s\,,\quad \wt{\mathcal{E}}_Y\equiv\wt\ep_s  .
\end{equation}

\subsection{Gaussian cubature formulas}\

Each family of polynomials $\mathcal{P}^{I,\pm}_k, \mathcal{P}^{III,\pm}_k$ has properties which yield to optimal Gaussian cubature formulas.

\begin{thm}\label{thmcub}\
\begin{enumerate}
\item  For any $N\in\N$ and any polynomial $f$ of degree at most $2N-1$, the following cubature formulas are exact:
\begin{align}
\int_{\mathfrak{F}(\wt{S}_n^\mathrm{aff})}f(Y)\omega^{I,+}(Y)\,dY&=\left(\frac{1}{N}\right)^n\sum_{Y\in \mathfrak{F}_N^{\mathrm{II},+}}\mathcal{H}_Y^{-1}f(Y)\,,  \label{cub1}\\
\int_{\mathfrak{F}(\wt{S}_n^\mathrm{aff})}f(Y)\omega^{III,+}(Y)\,dY&=\left(\frac{2}{2N+1}\right)^n\sum_{Y\in \mathfrak{F}_N^{\mathrm{VIII},+}}\mathcal{H}_Y^{-1}f(Y)\mathcal{J}^{III,+}(Y)\,. \label{cubproof}
\end{align}
\item For any $N\in\N$ and any polynomial $f$ of degree at most $2(N-n)+1$, the following cubature formulas are exact:
\begin{align}
\int_{\mathfrak{F}(\wt{S}_n^\mathrm{aff})}f(Y)\omega^{I,-}(Y)\,dY&=\left(\frac{1}{N}\right)^n\sum_{Y\in \mathfrak{F}_N^{\mathrm{II},-}}f(Y)\mathcal{J}^{I,-}(Y)\,,\\
\int_{\mathfrak{F}(\wt{S}_n^\mathrm{aff})}f(Y)\omega^{III,-}(Y)\,dY&=\left(\frac{2}{2N+1}\right)^n\sum_{Y\in \mathfrak{F}_N^{\mathrm{VIII},-}}f(Y)\mathcal{J}^{III,-}(Y)\,.\label{cub2}
\end{align}
\end{enumerate} 
\end{thm}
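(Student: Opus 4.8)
The plan is to establish all four identities by one scheme, carrying out the computation explicitly for \eqref{cubproof} and indicating the purely bookkeeping changes needed for the other three; write $\Pi_d$ for the space of polynomials in $\R[X_1,\dots,X_n]$ of degree at most $d$. First I would reduce to a single generating polynomial: by Proposition \ref{proppol} the polynomials $\mathcal{P}^{I,+}_k$ (resp.\ $\mathcal{P}^{I,-}_k$, $\mathcal{P}^{III,\pm}_k$) with $k_1\le d$ number $\binom{d+n}{n}=\dim\Pi_d$, each has degree at most $d$, and by the orthogonality of Proposition \ref{orthogbasis} they are linearly independent, hence form a basis of $\Pi_d$. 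By linearity it suffices to verify each cubature identity when $f$ ranges over these polynomials with $k_1\le 2N-1$ (symmetric cases) or $k_1\le 2(N-n)+1$ (antisymmetric cases). Using that $\varphi$ from \eqref{subst} is a bijection $F(\wt{S}_n^{\mathrm{aff}})\map\mathfrak{F}(\wt{S}_n^{\mathrm{aff}})$ which by Proposition \ref{mrizky} restricts to a bijection $F_N^{t,\pm}\map\mathfrak{F}_N^{t,\pm}$, and that $\abs{J(x)}=\mathcal{J}(\varphi(x))$, integration by substitution cancels the weight against the Jacobian: the left-hand side of \eqref{cub1} becomes $\int_{F(\wt{S}_n^{\mathrm{aff}})}\cos^+_k(x)\,dx$, that of \eqref{cubproof} becomes $\int_{F(\wt{S}_n^{\mathrm{aff}})}\cos^+_{\rho}(x)\cos^+_{k+\rho}(x)\,dx$, and similarly one obtains $\int\cos^-_{\rho_1}\cos^-_{k+\rho_1}$ and $\int\cos^-_{\rho_2}\cos^-_{k+\rho_2}$ for the antisymmetric families, via \eqref{newpol} and the fact that all node sets lie where the relevant denominator is nonzero. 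Each integral is evaluated at once by the continuous orthogonality \eqref{O1}--\eqref{O4} against the base function ($\cos^+_0=n!$, $\cos^+_{\rho}$, $\cos^-_{\rho_1}$, $\cos^-_{\rho_2}$), producing a constant multiple of $\delta_{k0}$.

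Next I would treat the discrete side. Transporting the finite sum on the right-hand side through $\varphi$ and using Proposition \ref{mrizky} turns it into a sum over $F_N^{\mathrm{II},\pm}$ (types I,$\pm$) or $F_N^{\mathrm{VIII},\pm}$ (types III,$\pm$) of the same product of cosine functions, weighted by $\mathcal{H}_Y^{-1}$ and, where relevant, by $\mathcal{E}_Y$ or $\wt{\mathcal{E}}_Y$. When $k_1$ is small enough that $k$ (resp.\ $k+\rho_1$, $k+\rho_2$) already lies in the label set $D_N^{\pm}$ of the pertinent transform, the sum is evaluated directly by the discrete orthogonality of SMDCT~II / SMDCT~VIII (Section~\ref{SMDCT}), resp.\ AMDCT~II (\cite{KPtrig}) / AMDCT~VIII (Section~\ref{AMDCT}), with the second index set to $0$, again yielding a constant multiple of $\delta_{k0}$. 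The decisive point is the remaining range, where $k_1$ reaches up to $2N-1$ (resp.\ $2(N-n)+1$) and the corresponding label lies outside $D_N^{\pm}$. There I would invoke the elementary aliasing identities for $\cos(\pi\lambda\theta)$ on the one-dimensional node sets: after the appropriate rescaling each one-variable factor $\cos(\pi\lambda s_j)$ either vanishes identically on the grid (when the component $\lambda$ hits the threshold $N$) or equals $-\cos(\pi(2N-\lambda)s_j)$ for $N<\lambda<2N$. Folding each component of the label this way, then re-ordering (which for the antisymmetric families produces a sign and gives $0$ when two folded components collide), expresses the cosine function on the grid as $\pm$ the corresponding cosine function labelled by a point of $D_N^{\pm}$; the combinatorial heart of the argument is to verify — using exactly $k_1\le 2N-1$, resp.\ $k_1\le 2(N-n)+1$ — that this folded label equals the base label ($0$, resp.\ $\rho_1$, resp.\ $\rho_2$) \emph{if and only if} $k=0$. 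Here the staircase shifts $\rho_1,\rho_2$ cost $n$ units of headroom, forcing the lower degree of precision $2(N-n)+1$. Together with the discrete orthogonality this makes the right-hand side vanish for every $k\ne 0$.

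It then remains to match constants in the case $k=0$, where each side reduces to the squared norm of the base function: continuously via \eqref{O1}--\eqref{O4} and discretely via the $k=k'=0$ instance of the relevant DCT. The prefactors $(1/N)^n$, resp.\ $(2/(2N+1))^n$, together with the factors $\mathcal{H}_Y^{-1}$ and $\mathcal{J}^{III,+},\mathcal{J}^{I,-},\mathcal{J}^{III,-}$ that encode the weight/Jacobian cancellation, are precisely what is needed to equate the two constants, completing the proof. Equivalently, the statement can be read through the standard criterion for Gaussian cubature (cf.\ \cite{Xuort}): the node set $\mathfrak{F}_N^{t,\pm}$ is the common zero set of all orthogonal polynomials of degree $N$, resp.\ $N-n+1$, its cardinality $|D_N^{\pm}|$ equals $\dim\Pi_{N-1}$, resp.\ $\dim\Pi_{N-n}$, so the formulas are Gaussian and the weights are the associated Christoffel numbers, which the discrete orthogonality evaluates in closed form. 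I expect the main obstacle to be the aliasing step: one must track, for all three distinct shifts and both grid moduli $2N-1$ and $2N+1$, how out-of-range labels reflect back into $D_N^{\pm}$, control the signs introduced by re-sorting in the antisymmetric cases, and exclude any spurious coincidence with the base label within the claimed degree range — this last fact being exactly what pins down the degrees of precision $2N-1$ and $2(N-n)+1$.
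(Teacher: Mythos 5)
Your proposal is correct in substance but follows a genuinely different route from the paper. The paper's proof never expands $f$ itself in the orthogonal basis up to degree $2N-1$: it writes each monomial of degree at most $2N-1$ as a product $g_1g_2$ with $\deg g_1\le N$ and $\deg g_2\le N-1$, expands \emph{each factor} in the family $\mathcal{P}^{III,+}_k$ (so all labels satisfy $k_1\le N$, $k_1'\le N-1$), and then matches the continuous orthogonality \eqref{contorthogpol} against the discrete orthogonality \eqref{orthogVIII+} pairwise; the only label outside the range of the discrete transform is $k_1=N$, which is disposed of by the observation that $\mathcal{P}^{III,+}_k$ with $k_1=N$ vanishes identically on $\mathfrak{F}_N^{\mathrm{VIII},+}$. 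This is the classical Gaussian-cubature argument and it entirely sidesteps the aliasing analysis that carries the weight in your version. Your route --- expanding $f$ directly, reducing both sides to $\langle\text{base function},\cos^\pm_{k+\text{shift}}\rangle$, and folding out-of-range labels back into $D_N^{\pm}$ componentwise --- is viable (the folded matches $(k+\rho_1)_{\sigma(i)}=2N-(\rho_1)_i$ are indeed excluded precisely when $k_1\le 2(N-n)+1$, which is a nice way to see where the degrees of precision come from), but the combinatorial heart that you flag as the main obstacle is exactly the part you would still have to write out, including the sign bookkeeping and collision arguments in the antisymmetric cases; the paper's factorization trick buys you freedom from all of that at the cost of obscuring why the degree bounds are sharp. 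Two small corrections to your write-up: the four Gaussian formulas of the theorem involve only the grids of types II and VIII, so the relevant aliasing moduli are $2N$ (for $s_i=(r_i+\tfrac12)/N$) and $2N+1$, not $2N-1$; and the weights $\mathcal{E}_Y$, $\wt{\mathcal{E}}_Y$ never enter here --- they occur only in the non-Gaussian formulas of the following subsection.
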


\begin{proof}
We show that the formula \eqref{cubproof} holds. Due to the linearity of the integrals and the sums, it is sufficient to prove it only for monomials $f$ of degree not exceeding $2N-1$. Such monomials are expressed as a product of two monomials --  $g_1$ of degree at most $N$ and $g_2$ of degree at most $N-1$. From Propositions \ref{orthogbasis} and \ref{proppol}, the polynomials $g_1$ and $g_2$  are rewritten as linear combinations of polynomials $\mathcal{P}^{III,+}_{k}$ with $k_1\leq N$ and $\mathcal{P}^{III,+}_{k'}$ with  $k'_1 \leq N-1$.
Therefore, we only need to show the formula for those polynomials. 
Using the continuous orthogonality \eqref{contorthogpol}, we obtain
 $$\int_{\mathfrak{F}(\wt{S}_n^\mathrm{aff})}\mathcal{P}^{III,+}_{k}(Y)\mathcal{P}^{III,+}_{k'}(Y)\omega^{III,+}(Y)\,dY=\frac{1}{2^n}H_k\delta_{kk'}\,.$$
Similarly, from equality \eqref{orthogVIII+} it follows that
$$\left(\frac{2}{2N+1}\right)^n\sum_{s\in F_N^{\mathrm{VIII},+}}H_s^{-1}\mathcal{P}^{III,+}_{k}(X(s))\mathcal{P}^{III,+}_{k'}(X(s))(\cos^+_{\rho}(s))^2=\frac{1}{2^n}H_k\delta_{kk'}$$
if $k_1\leq N-1$ and $k_1'\leq N-1$. Note that if $k_1=N$, then $\mathcal{P}^{III,+}_{k}(Y)=0$ for all points $Y$ from $\mathfrak{F}_N^{\mathrm{VIII},+}$. This implies that we extend the discrete orthogonality relation for any $k$ with $k_1\leq N$ and any $k'$ with $k_1'\leq N-1$.
Consequently, the second cubature formula follows from the continuous and discrete orthogonality of polynomials $\mathcal{P}^{III,+}_{k}$. We prove the other formulas similarly. Note that SMDCT II and AMDCT II needed to derive two of the results are found in \cite{KPtrig}.
\end{proof}

\begin{thm}
The cubature formulas \eqref{cub1}--\eqref{cub2} are optimal Gaussian cubature formulas. Moreover, it holds that
\begin{itemize}
\item the orthogonal polynomials $\mathcal{P}^{I,+}_k$ with $k_1=N$ vanish for all points of the set $\mathfrak{F}_N^{\mathrm{II},+}$,
\item the orthogonal polynomials $\mathcal{P}^{III,+}_k$ with $k_1=N$ vanish for all points of the set $\mathfrak{F}_N^{\mathrm{VIII},+}$,
\item the orthogonal polynomials $\mathcal{P}^{I,-}_k$ with $k_1=N-n+1$ vanish for all points of the set $\mathfrak{F}_N^{\mathrm{II},-}$,
\item the orthogonal polynomials $\mathcal{P}^{III,-}_k$ with $k_1=N-n+1$ vanish for all points of the set $\mathfrak{F}_N^{\mathrm{VIII},-}$.
\end{itemize} 
\end{thm}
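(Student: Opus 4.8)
The plan is to read the optimality off a node count and to check the vanishing statements by a direct evaluation of the underlying (anti)symmetric cosine functions on the relevant grids, transported through the substitution $\varphi$.

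\emph{Optimality.} As recalled in the introduction (see \cite{Xuort}), a cubature formula that is exact on all polynomials of degree at most $2d-1$ must use at least $\dim\Pi^n_{d-1}=\binom{d-1+n}{n}$ nodes, and it is a Gaussian formula precisely when this bound is attained. By Proposition~\ref{mrizky} the four relevant node sets satisfy $\abs{\mathfrak F_N^{\mathrm{II},\pm}}=\abs{F_N^{\mathrm{II},\pm}}=\abs{D_N^{\pm}}$ and $\abs{\mathfrak F_N^{\mathrm{VIII},\pm}}=\abs{F_N^{\mathrm{VIII},\pm}}=\abs{D_N^{\pm}}$. Counting non-increasing $n$-tuples with entries in $\{0,\dots,N-1\}$ gives $\abs{D_N^+}=\binom{N-1+n}{n}=\dim\Pi^n_{N-1}$, which is exactly the bound associated with the degree of precision $2N-1$ of \eqref{cub1} and \eqref{cubproof} (so $d=N$). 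Counting $n$-element subsets of $\{0,\dots,N-1\}$ gives $\abs{D_N^-}=\binom{N}{n}=\dim\Pi^n_{N-n}$, which is exactly the bound associated with the degree of precision $2(N-n)+1$ of the two antisymmetric formulas (so $d=N-n+1$, $d-1=N-n$). Hence each of \eqref{cub1}--\eqref{cub2} uses the minimal possible number of nodes and is therefore an optimal Gaussian cubature formula.

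\emph{Vanishing.} By \eqref{newpol} and Proposition~\ref{prop1}, under the substitution $\varphi$ each polynomial named in the statement equals $\cos^+_k$, $\cos^-_{k+\rho_1}/\cos^-_{\rho_1}$, $\cos^+_{k+\rho}/\cos^+_{\rho}$, or $\cos^-_{k+\rho_2}/\cos^-_{\rho_2}$. The largest entry of the label of the numerator equals $N$ in the cases $\mathcal{P}^{I,+}_k$ (namely $k_1=N$) and $\mathcal{P}^{I,-}_k$ (namely $k_1+(n-1)=N$, this being the strict maximum of $k+\rho_1$ since $k_1\ge k_2\ge\cdots$), and equals $N+\tfrac12$ in the cases $\mathcal{P}^{III,+}_k$ (namely $k_1+\tfrac12$) and $\mathcal{P}^{III,-}_k$ (namely $k_1+n-\tfrac12$). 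A point $s$ of $F^{\mathrm{II},\pm}_N$ has coordinates $s_j=\tfrac{2r_j+1}{2N}$, and a point of $F^{\mathrm{VIII},\pm}_N$ has $s_j=\tfrac{2r_j+1}{2N+1}$, with $r_j\in\{0,\dots,N-1\}$. For every permutation $\sigma$ the corresponding summand of the determinant or permanent defining the numerator contains the factor $\cos(\pi\mu\,s_{\sigma^{-1}(1)})$, where $\mu$ is the largest label; on $F^{\mathrm{II},\pm}_N$ this factor is $\cos(\pi N s_{\sigma^{-1}(1)})=\cos(\pi(r_{\sigma^{-1}(1)}+\tfrac12))=0$, and on $F^{\mathrm{VIII},\pm}_N$ it is $\cos(\pi(N+\tfrac12)s_{\sigma^{-1}(1)})=\cos(\pi(r_{\sigma^{-1}(1)}+\tfrac12))=0$. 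Thus every summand, hence the whole numerator, vanishes identically on the relevant grid. The denominators are nonzero there: $\cos^+_\rho(s)=n!\prod_i\cos(\tfrac\pi2 s_i)>0$ since $s_i\in(0,1)$, and $\cos^-_{\rho_1}$, $\cos^-_{\rho_2}$ are nonzero on $F(\wt S_n^{\mathrm{aff}})^\circ$ by Corollary~\ref{nonzero}, while $F^{\mathrm{II},-}_N$ and $F^{\mathrm{VIII},-}_N$ lie in $F(\wt S_n^{\mathrm{aff}})^\circ$ (their coordinates are pairwise distinct and contained in $(0,1)$). Therefore each named polynomial vanishes at every point of $\mathfrak F_N^{\mathrm{II},\pm}$, respectively $\mathfrak F_N^{\mathrm{VIII},\pm}$.

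\emph{Main obstacle.} The only point needing care is that $D^+_N$ permits equal entries, so some points of $F^{\mathrm{II},+}_N$ and $F^{\mathrm{VIII},+}_N$ lie on $\partial F(\wt S_n^{\mathrm{aff}})$, whereas \eqref{newpol} is asserted only on the interior. This is resolved by using that $\mathcal{P}^{I,+}_k$ and $\mathcal{P}^{III,+}_k$ are honest polynomials in the functions $X_1,\dots,X_n$, which are continuous on the closure of the fundamental domain, so the identities of \eqref{newpol} extend by continuity to every boundary point at which the relevant denominator is nonzero -- in particular to all the grid points above, all of whose coordinates lie in the open interval $(0,1)$. For the antisymmetric families this issue does not arise, since their grids already lie in the open simplex. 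With this caveat dispatched, the remaining work consists of the elementary trigonometric evaluations indicated above, and no input beyond Theorem~\ref{thmcub}, Propositions~\ref{prop1} and \ref{mrizky}, and Corollary~\ref{nonzero} is needed. As an alternative, the vanishing could be deduced from optimality via Xu's characterization, because the polynomials in each item of the list form, by Propositions~\ref{proppol} and \ref{orthogbasis}, a basis of the space of orthogonal polynomials of exact degree $d$, and the nodes of a Gaussian formula are precisely the common zeros of such a basis; the direct verification is, however, shorter and self-contained.
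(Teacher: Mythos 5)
Your proposal is correct and follows essentially the same route as the paper: the vanishing is obtained by substituting the grid points into the defining relations \eqref{newpol} and observing that the factor carrying the largest label reduces to $\cos\bigl(\pi(r+\tfrac12)\bigr)=0$, and optimality follows by matching the node count of Proposition~\ref{mrizky} against the dimension count of Proposition~\ref{proppol}. You simply carry out in full the computations the paper leaves as "follows directly by substituting," and your continuity argument for the symmetric grid points lying on $\partial F(\wt S_n^{\mathrm{aff}})$ is a worthwhile detail the paper's terse proof glosses over.
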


\begin{proof}
The fact that the nodes are common zeros of the specific sets of orthogonal polynomials follows directly by substituting the grid points to the definition of the polynomials via symmetric and antisymmetric cosine functions \eqref{newpol}. By Propositions \ref{proppol} and \ref{mrizky} we obtain that the number of polynomials $\mathcal{P}^{I,+}_k$ of degree $N-1$ is equal to the number of nodes in $\mathfrak{F}^{\mathrm{II},+}_N$, and therefore the cubature formula \eqref{cub1} is Gaussian. The proof for other cubature formulas is similar.
\end{proof}

The Gaussian formulas \eqref{cub1}--\eqref{cub2} are special cases of formulas derived in a general setting as an (anti)symmetrization of orthogonal polynomials of one variable \cite{xumult}. In particular, the variables $(u_1,\dots,u_n)$ from \cite{xumult} are related to our variables \eqref{var} by 
the relations $$u_i=\frac{X_i}{(n-i)!i!}$$ 
and our orthogonal polynomials \eqref{newpol} correspond to the measure of the form
$$\prod_{i=1}^n(1-y_i)^\alpha(1+y_i)^\beta\left(\prod_{1\leq i<j\leq n}(y_i-y_j)^2\right)^\gamma\,dy_1\dots dy_n\,,\quad -1< y_1<\dots< y_n<1\,,$$
where $$u_k\equiv u_k(y_1,\dots,y_n)=\sum_{1\leq i_1<\dots <i_k\leq n}y_{i_1}\dots y_{i_k}\,,\quad k=1,\dots,n\,.$$
The parameters $\alpha,\beta,\gamma$ take for each class of the polynomials \eqref{newpol} the following values:
\begin{align*}
\mathcal{P}^{I,+}_k: \quad &\alpha=\beta=\gamma=-\frac12\,,\\
\mathcal{P}^{I,-}_k: \quad &\alpha=\beta=-\frac12\,,\; \gamma=\frac12\,,\\
\mathcal{P}^{III,+}_k: \quad &\alpha=\gamma=-\frac12\,,\;\beta=\frac12\,,\\
\mathcal{P}^{III,-}_k: \quad &\alpha=-\frac12\,,\; \beta=\gamma=\frac12\,.
\end{align*}
The advantage of the current specialization is, besides more explicit formulation ready for application, the theoretical link between the polynomials and the (anti)symmetric cosine functions.  This connection, similar to the connection of the classical Chebyshev polynomials of the first kind and the cosine function of one variable, may serve as a starting point for further investigation of the properties of the multivariate polynomials \eqref{newpol} such as their zeroes, discrete orthogonality, and Lebesgue constant.
\subsection{Other cubature formulas}\

Similarly as in  Theorem \ref{thmcub}, one uses the remaining 12 discrete transforms to derive additional cubature formulas for the orthogonal polynomials.  Note that these formulas are slightly less efficient than Gaussian cubature formulas.

\subsubsection{Formulas related to $\mathcal{P}^{I,+}_k$}\

The transforms SMDCT I, V, and VI are used to derive additional cubature formulas for the orthogonal polynomials $\mathcal{P}^{I,+}_k$.
\begin{enumerate}
\item  For any $N\in\N$ and any polynomial $f$ of degree at most $2N-1$, the following cubature formula is exact: 
$$\int_{\mathfrak{F}(\wt{S}_n^\mathrm{aff})}f(Y)\omega^{I,+}(Y)\,dY=\left(\frac{1}{N}\right)^n\sum_{Y\in \mathfrak{F}_N^{\mathrm{I},+}}\mathcal{E}_Y\mathcal{H}_Y^{-1}f(Y)\,.$$
\item 
 For any $N\in\N$, $N\geq 2$, and any polynomial $f$ of degree at most $2(N-1)$, the following cubature formulas are exact: 
$$\int_{\mathfrak{F}(\wt{S}_n^\mathrm{aff})}f(Y)\omega^{I,+}(Y)\,dY=\left(\frac{2}{2N-1}\right)^n\sum_{Y\in \mathfrak{F}_N^{\mathrm{V},+}}\mathcal{E}_Y\mathcal{H}_Y^{-1}f(Y)\,, $$
$$\int_{\mathfrak{F}(\wt{S}_n^\mathrm{aff})}f(Y)\omega^{I,+}(Y)\,dY=\left(\frac{2}{2N-1}\right)^n\sum_{Y\in \mathfrak{F}_N^{\mathrm{VI},+}}\wt{\mathcal{E}}_Y\mathcal{H}_Y^{-1}f(Y)\,. $$
\end{enumerate}

\subsubsection{Formulas related to $\mathcal{P}^{I,-}_k$}\

The transforms AMDCT I, V, and VI are used to derive additional cubature formulas for the orthogonal polynomials $\mathcal{P}^{I,-}_k$. 
\begin{enumerate}
\item 
For any $N\in\N$, $N\geq n$, and any polynomial $f$ of degree at most $2(N-n)+1$, the following cubature formula is exact:
$$\int_{\mathfrak{F}(\wt{S}_n^\mathrm{aff})}f(Y)\omega^{I,-}(Y)\,dY=\left(\frac{1}{N}\right)^n\sum_{Y\in \mathfrak{F}_N^{\mathrm{I},-}}\mathcal{E}_Y f(Y)\mathcal{J}^{I,-}(Y)\,. $$
\item For any $N\in\N$, $N> n$, and any polynomial $f$ of degree at most $2(N-n)$, the following cubature formulas are exact: 
$$\int_{\mathfrak{F}(\wt{S}_n^\mathrm{aff})}f(Y)\omega^{I,-}(Y)\,dY=\left(\frac{2}{2N-1}\right)^n\sum_{Y\in \mathfrak{F}_N^{\mathrm{V},-}}\mathcal{E}_Yf(Y)\mathcal{J}^{I,-}(Y)\,, $$
$$\int_{\mathfrak{F}(\wt{S}_n^\mathrm{aff})}f(Y)\omega^{I,-}(Y)\,dY=\left(\frac{2}{2N-1}\right)^n\sum_{Y\in \mathfrak{F}_N^{\mathrm{VI},-}}\wt{\mathcal{E}}_Y f(Y)\mathcal{J}^{I,-}(Y)\,. $$
\end{enumerate}

\subsubsection{Formulas related to $\mathcal{P}^{III,+}_k$}\

The transforms SMDCT III, IV, and VII are used to derive additional cubature formulas for the polynomials $\mathcal{P}^{III,+}_k$. 
\begin{enumerate}
\item 
For any $N\in\N$, $N\geq 2$, and any polynomial $f$ of degree at most $2(N-1)$, the following cubature formulas are exact: 
$$\int_{\mathfrak{F}(\wt{S}_n^\mathrm{aff})}f(Y)\omega^{III,+}(Y)\,dY=\left(\frac{1}{N}\right)^n\sum_{Y\in \mathfrak{F}_N^{\mathrm{III},+}}\mathcal{E}_Y\mathcal{H}_Y^{-1}f(Y)\mathcal{J}^{III,+}(Y)\,, $$
$$\int_{\mathfrak{F}(\wt{S}_n^\mathrm{aff})}f(Y)\omega^{III,+}(Y)\,dY=\left(\frac{1}{N}\right)^n\sum_{Y\in \mathfrak{F}_N^{\mathrm{IV},+}}\mathcal{H}_Y^{-1}f(Y)\mathcal{J}^{III,+}(Y)\,. $$
\item 
For any $N\in\N$, $N\geq 2$, and any polynomial $f$ of degree at most $2(N-1)-1$, the following cubature formula is exact: 
$$\int_{\mathfrak{F}(\wt{S}_n^\mathrm{aff})}f(Y)\omega^{III,+}(Y)\,dY=\left(\frac{2}{2N-1}\right)^n\sum_{Y\in \mathfrak{F}_N^{\mathrm{VII},+}}\mathcal{E}_Y\mathcal{H}_Y^{-1}f(Y)\mathcal{J}^{III,+}(Y)\,. $$
\end{enumerate}

\subsubsection{Formulas related to $\mathcal{P}^{III,-}_k$}\

The transforms AMDCT III, IV, and VII are used to derive additional cubature formulas for the polynomials $\mathcal{P}^{III,-}_k$.
\begin{enumerate}
\item
For any $N\in\N$, $N> n$, and any polynomial $f$ of degree at most $2(N-n)$, the following cubature formulas are exact: 
$$\int_{\mathfrak{F}(\wt{S}_n^\mathrm{aff})}f(Y)\omega^{III,-}(Y)\,dY=\left(\frac{1}{N}\right)^n\sum_{Y\in \mathfrak{F}_N^{\mathrm{III},-}}\mathcal{E}_Y f(Y)\mathcal{J}^{III,-}(Y)\,, $$
$$\int_{\mathfrak{F}(\wt{S}_n^\mathrm{aff})}f(Y)\omega^{III,-}(Y)\,dY=\left(\frac{1}{N}\right)^n\sum_{Y\in \mathfrak{F}_N^{\mathrm{IV},-}}f(Y)\mathcal{J}^{III,-}(Y)\,. $$
\item 
For any $N\in\N$, $N> n$, and any polynomial $f$ of degree at most $2(N-n)-1$, the following cubature formula is exact: 
$$\int_{\mathfrak{F}(\wt{S}_n^\mathrm{aff})}f(Y)\omega^{III,-}(Y)\,dY=\left(\frac{2}{2N-1}\right)^n\sum_{Y\in \mathfrak{F}_N^{\mathrm{VII},-}}\mathcal{E}_Y f(Y)\mathcal{J}^{III,-}(Y)\,. $$
\end{enumerate}

\section{Concluding remarks}
\begin{itemize}
\item In this paper, only the antisymmetric and symmetric generalizations of cosine functions are investigated. Similarly to the cosine functions, generalizations of the common sine functions of one variable are defined in \cite{KPtrig} and these generalizations have several remarkable discretization properties. The discrete antisymmetric and symmetric sine transforms of types I--IV  are developed in \cite{sin} for the two-dimensional case only and the discrete multivariate sine transforms of type I are found in \cite{KPtrig}. The generalization of the antisymmetric and symmetric discrete sine transforms,  analogous to the discrete multivariate sine transforms of types II--VIII \cite{Brit}, have not   yet been described.  
\item The 16 DCTs, which are described on the grids $F^{\mathrm{I},\pm}_N,\dots, F^{\mathrm{VIII},\pm}_N$, are straightforwardly translated into 16 transforms of the corresponding polynomials on the grids, $$\mathfrak{F}^{\mathrm{I},\pm}_N,\dots, \mathfrak{F}^{\mathrm{VIII},\pm}_N.$$ Polynomial interpolation formulas, similar to formulas (3.30) and \eqref{inter}, can obviously be formulated. The interpolation properties of these polynomial formulas as well as the convergence of the corresponding polynomials series poses an open problem. 
\item The question of whether one can introduce multivariate Chebyshev-like polynomials of the second and fourth kinds in connection with the generalizations of the sine functions forms another open problem. The decomposition of products of two-variable antisymmetric and symmetric sine functions from \cite{sin} indicates the possibility of construction of such multivariate polynomials. Moreover, the continuous and discrete orthogonality of the antisymmetric and symmetric sine functions further indicates that the corresponding cubature formulas, useful in numerical analysis, can again be developed.
\item Since the Weyl groups corresponding to the simple Lie algebras $B_n$ and $C_n$  are isomorphic to $(\Z/2\Z)^n\rtimes S_n$ \cite{hum}, it is possible to show that the antisymmetric and symmetric generalizations of trigonometric functions are related to the orbit functions -- $C$-, $S$-, $S^s$-, and $S^l$-functions -- studied in \cite{OF2,OF1,SsSlcub}. For example, the symmetric cosine functions coincide, up to a multiplication by constant, with $C$-functions and the antisymmetric cosine functions become, up to a multiplication by constant, $S^l$-functions in the case $B_n$, and $S^s$-functions in the case $C_n$. The grids considered in this paper, which allow the eight types of the transforms for each case, are, however, different from the grids on which the discrete calculus of the orbit functions is described. Thus, the collection of the resulting cubature formulas is richer and includes formulas of the Gaussian type.

\item The exploration of the vast number of theoretical aspects as well as applications of the Chebyshev polynomials \cite{Hand,riv} is beyond the scope of this work. However, since the basic properties of these polynomials -- such as the discrete and continuous orthogonality together with the cubature formulas -- are replicated for the multivariate symmetric and antisymmetric cosine functions, one may expect that many other properties such as existence of FFT-based algorithms and the Clenshaw-Curtis quadrature technique \cite{tref} will find their corresponding multidimensional (anti)symmetric generalizations as well. The work presented in this paper may represent a starting point for these open problems and further research. 
\end{itemize}

\section*{Acknowledgments}

The authors gratefully acknowledge the support of this work by the Natural Sciences and Engineering Research Council of Canada, by the Doppler Institute of the Czech Technical University in Prague and by RVO68407700. JH is grateful for the hospitality extended to him at the Centre de recherches math\'ematiques, Universit\'e de Montr\'eal. LM would also like to express her gratitude to the Department of Mathematics and Statistic at Universit\'e de Montr\'eal, for the hospitality extended to her during her doctoral studies, and to the Institute de Sciences Math\'ematiques de Montr\'eal.  



\begin{thebibliography}{99}

\bibitem{beer}
R. J. Beerends, {\it Chebyshev polynomials in several variables and the radial part of the Laplace-Beltrami operator}, Trans. AMS 328 (1991) 779-814.

\bibitem{xumult}  
H. Berens, H. J. Schmid, Y. Xu, {\it Multivariate Gaussian cubature formulae}, Arch. Math. 64 (1995) 26--32.

\bibitem{Brit}
V.  Britanak, K. Rao, P. Yip, {\it Discrete cosine and sine transforms. General properties, fast algorithms and integer approximations}, Elsevier/Academic Press, Amsterdam, 2007.


\bibitem{Xuort}
C. F. Dunkl, Y. Xu, {\it Orthogonal polynomials of several variables}, Encyclopedia of Mathematics and its Applications, Vol. 81, Cambridge University Press, Cambridge, 2001. 


\bibitem{Hand}
D. C. Handscomb, J. C. Mason,  {\it Chebyshev polynomials}, Chapman \& Hall/CRC, Boca Raton, FL, 2003.

\bibitem{jac}
 G. Heckman, H. Schlichtkrull, {\it Harmonic Analysis and Special Functions on Symmetric Spaces}, Academic Press Inc., San Diego, 1994.

\bibitem{hoff}
 M. E. Hoffman, W. D. Withers,
{\it Generalized Chebyshev polynomials associated with affine Weyl groups}, Trans. AMS 308, No. 1 (1988) 91--104.

\bibitem{sin}
 J. Hrivn\'ak, L. Motlochov\'a, J. Patera, {\it Two-dimensional symmetric and antisymmetric generalizations of sine functions,\/} J.~Math.~Phys.~51 (2010) 073509, 13. 


\bibitem{HP}
J. Hrivn\'ak, J. Patera, {\it Two dimensional symmetric and antisymmetric generalizations of exponential and cosine functions}, J. Math. Phys., 51 (2010) 023515, 24.

\bibitem{hum}
J.~E.~Humphreys, {\it Reflection groups and Coxeter groups}, Cambridge Studies in Advanced Mathematics, Vol. 29, Cambridge University Press, Cambridge, 1990.


\bibitem{KPtrig}
A. Klimyk, J. Patera, {\it (Anti)symmetric multivariate trigonometric functions and corresponding Fourier transforms}, J. Math. Phys.,  48 (2007) 093504, 24.  

\bibitem{OF2}
A. U. Klimyk, J. Patera, {\it Antisymmetric orbit functions},  SIGMA Symmetry Integrability Geom. Methods Appl., 3 (2007) 023, 83. 

\bibitem{OF1}
A. U. Klimyk, J. Patera, {\it Orbit functions},  SIGMA Symmetry Integrability Geom. Methods Appl., 2 (2006) 006, 60.


 \bibitem{koorn2}
T. Koornwinder, {\it Orthogonal polynomials in two variables which are eigenfunctions of two algebraically independent partial differential operators. I,II}, Nederl. Akad. Wetensch. Proc. Ser. A 77=Indag. Math., 36 (1974) 48--66.

\bibitem{koorn1}
T. Koornwinder, {\it Orthogonal polynomials in two variables which are eigenfunctions of two algebraically independent partial differential operators. III,IV}, Nederl. Akad. Wetensch. Proc. Ser. A 77=Indag. Math., 36 (1974) 357--381. 

\bibitem{koorn}
T. Koornwinder, {\it Two-variable analogues of the classical orthogonal polynomials}, Theory and application of special functions, Math. Res. Center, Univ. Wisconsin, Publ. No. 35, Academic Press, New York, 1975, 435--495.

\bibitem{xug2}
H. Li, J. Sun, Y. Xu, {\it Discrete Fourier analysis and Chebyshev polynomials with $G_2$ group}, SIGMA Symmetry Integrability Geom. Methods Appl., 8 (2012) 067, 29.


\bibitem{xua2}
H. Li, J. Sun, Y. Xu, {\it Discrete Fourier analysis, cubature, and interpolation on a hexagon and a triangle}, SIAM J. Numer. Anal., 46 (2008) 1653--1681.

\bibitem{Lin}
Yih-Kai Lin, {\it High capacity reversible data hiding scheme based upon discrete cosine transformation}, J. of Systems and Software, 85 Issue 10 (2012) 2395--2404.        
				
\bibitem{mac}
 I. G. MacDonald, {\it Orthogonal polynomials associated with root systems}, S\'em. Lothar. Combin. 45 (2000/01), Art. B45a, 40. 
          

							
\bibitem{Man}
K. Manikantan, Vaishnavi Govindarajan, V. Sasi Kiran, S. Ramachandran,	{\it Face Recognition using Block-Based
DCT Feature Extraction}, J. of Advanced Computer Science and Technology, 1,  Issue 4, 2012, 266--283.										
										
\bibitem{SsSlcub}
R. V. Moody, L. Motlochov\'a, J. Patera, {\it Gaussian cubature arising from hybrid characters of simple Lie groups}, J. Fourier Anal. Appl., 20, Issue 6, 2014, 1257--1290.
      
\bibitem{munthe1}
H. Z. Munthe-Kaas, {\it On group Fourier analysis and symmetry preserving discretizations of PDEs}, J. Phys. A: Math. Gen. 39 (2006) 5563-–5584.     
        
\bibitem{munthe2}
B. N. Ryland, H. Z. Munthe-Kaas, {\it On multivariate Chebyshev polynomials and spectral approximation on triangles}, Spectral and High Order Methods for Partial Differential Equations, Lecture Notes in computational science and engineering, Springer 2011.
     
\bibitem{riv}
T. J. Rivlin, {\it The Chebyshef polynomials}, Pure and Applied Mathematics, John Wiley \& Sons, Inc., New York, 1990.


\bibitem{xuc2}
H. J. Schmid, Y. Xu, {\it On bivariate Gaussian cubature formulae}, Proc. Amer. Math. Soc., 122 (1994) 833--841.


\bibitem{Strang}
G.~Strang, {\it The discrete cosine transform}, SIAM Rev.,  41 (1999) 135--147.

 \bibitem{tref}
 L. N. Trefethen, {\it Is Gauss quadrature better than Clenshaw-Curtis?} SIAM Rev., 50 (2008) 67--87.   
 
\bibitem{YuAnNy}
H. Yu, S. Andersson, G. Nyman, 
\textit{A generalized discrete variable representation approach to interpolating or fitting potential energy surfaces}, Chemical Physics Letters,  321 (2000) 275--280.


\end{thebibliography}
\end{document}